\newtheorem{df}{Definition}[section]
\newtheorem{lm}{Lemma}[section]
\newtheorem{remark}{Remark}[section]
\newtheorem{thm}{Theorem}[section]
\newtheorem{rmk}{Remark}[section]
\newtheorem{prop}{{\bf Proposition}}[section]
\newtheorem{cor}{Corollary}[section]
\DeclareMathOperator{\Id}{Id}
\DeclareMathOperator{\Ima}{Im}
\newcounter{saveeqn}%
\title [Takens Theorem  for nonautonomous dynamical systems]{Takens Theorem  for nonautonomous partially hyperbolic
dynamical systems}
\author{ Davor Dragi\v cevi\'c}
\address[Davor Dragi\v cevi\'c]
{Faculty of Mathematics\\
    University of Rijeka\\
    Rijeka 51000, Croatia}
\email[D.~Dragi\v cevi\'c]{ddragicevic@math.uniri.hr}
\author{Xiao Tang}
\address[Xiao Tang]
{School of Mathematical Sciences\\
    Chongqing Normal University\\
    Chongqing 401331, China}
\email[X. Tang (corresponding author)]{mathtx@163.com}
\author{ Wenmeng Zhang}
\address[Wenmeng Zhang]
{School of Mathematical Sciences\\
    Chongqing Normal University\\
    Chongqing 401331, China}
\email[W.~M.~Zhang]{wmzhang@cqnu.edu.cn}
\date{}
\begin{document}
\maketitle

\begin{abstract}
Takens Theorem for a partially hyperbolic dynamics provides a  normal linearization along the center manifold. In this paper, we give the nonautonomous version of Takens Theorem under non-resonance conditions formulated in terms of the dichotomy spectrum. In our proof, one difficulty is to solve homological equations for the normal form theory which involve a center variable, while another difficulty is to find the dichotomy spectrum of   a certain matrix cocycle that is block lower triangular.
In order to overcome those difficulties, in comparison with the autonomous case, we need an additional term in the (nonautonomous) non-resonance conditions to guarantee certain spectral gap conditions. This additional term
disappears naturally in the autonomous case.

\vskip 0.2cm

{\bf Keywords}: Nonautonomous system; dichotomy spectrum; center manifold; homological equation; matrix cocycle

\vskip 0.2cm
{\bf AMS (2010) subject classification:} 37D10
\end{abstract}

\baselineskip 15pt   
\parskip 10pt         

\thispagestyle{empty}
\setcounter{page}{1}


\setcounter{equation}{0}
\setcounter{lm}{0}
\setcounter{thm}{0}
\setcounter{rmk}{0}
\setcounter{df}{0}
\setcounter{cor}{0}
\setcounter{exa}{0}
\allowdisplaybreaks[2]


\section{Introduction}

\noindent Smooth linearization is one of the most important topics in the qualitative theory of dynamical systems since it is highly related to the classification, simplification and the structural stability of dynamical systems. Its study goes back to the pioneering work of Poincar\'e (\cite{Poin}), who proved that an analytic diffeomorphism defined on $\mathbb{C}^d$ ($d\in\mathbb{N}$) can be analytically conjugated to
its linear part provided that  its eigenvalues lie in the Poincar\'e domain, i.e., all of the eigenvalues $\lambda_i$ for $i=1,\ldots ,d$ have modules less than $1$ or greater than $1$ (which is a special case of the hyperbolicity) and satisfy the
{\it non-resonance conditions of all orders}:
\begin{align}
\lambda_1^{t_1}\cdots \lambda_d^{t_d}\ne \lambda_j\qquad \forall j=1,\ldots ,d,
\label{NR-cond}
\end{align}
where $t_i\ge 0$ are all possible integers such that
$\sum_{i=1}^dt_i\ge 2$.
On $\mathbb{R}^d$,
Sternberg (\cite{Sternberg1,Sternberg2})
proved that for any $k\in \mathbb{N}$ there is a sufficiently large  $N\in \mathbb N$ (greater than $k$)
such that each $C^N$ hyperbolic diffeomorphism is locally $C^k$ conjugated to its linear part if the
non-resonance condition of order $N$ holds, i.e., (\ref{NR-cond}) holds for integers $t_i\ge 0$ such that $2\le \sum_{i=1}^dt_i\le N$. Remark that $k\to \infty$ as $N\to \infty$.

In the non-hyperbolic case, Takens (\cite{Takens-Topol71}) further considered diffeomorphisms with center directions. More precisely, he proved  that for any $k\in \mathbb{N}$ there is a sufficiently large  $N\in \mathbb N$
such that if
\begin{align}
|\lambda_1^{t_1}\cdots \lambda_d^{t_d}|\ne |\lambda_j|,\qquad \forall j=1,\ldots, d,
\label{S-NR}
\end{align}
for integers $t_i\ge 0$ such that $2\le \sum_{i=1}^dt_i\le N$,
then a $C^N$ diffeomorphism with a center direction is locally $C^k$ conjugated to its {\it Takens normal form}
\begin{align*}
\left(
  \begin{array}{lll}
x_s
\vspace{1ex}\\
x_c
\vspace{1ex}\\
x_u
\end{array}
\right)
\mapsto
\left(
  \begin{array}{lll}
\Lambda_s(x_c)x_s
\vspace{1ex}\\
\Lambda_cx_c+f_c(x_c)
\vspace{1ex}\\
\Lambda_u(x_c)x_u
\end{array}
\right),
\end{align*}
where $x_*$ ($*:=s,c,u$) are variables corresponding to the stable, center and unstable subspaces
and $\Lambda_*$ ($*:=s,c,u$) are linear operators such that the eigenvalues of $\Lambda_s(0)$, $\Lambda_c$ and $\Lambda_u(0)$ have modulus less than $1$, equal to 1 and greater than $1$, respectively.
Basically, Takens Theorem can be used to obtain the normal form of a partially hyperbolic system (i.e., a system with a center direction) via a $C^k$ transformation. This is important because the Takens normal form can help one to
linearize a partially hyperbolic system along its center manifold and can also help to obtain the existence of a  smooth invariant foliation near the center manifold. Consequently, Takens Theorem has wide applications to the studies of bifurcations (\cite{NPT-IHES83, Rob-Nonlin89}), nonlinear oscillations (\cite{Chi-Nonlin10}), Hilbert's 16th problem (\cite{DR-CPAA09}), the $N$-body problem (\cite{D-Nonlin21}), boundary value
problems (\cite{DS-TAMS95}), random tree models (\cite{CDHM-JSP23}), etc.

Due to the importance of Takens Theorem, providing its extensions and generalizations  has been an active area of research since the 1990s. On one hand, $C^\infty$ versions of Takens Theorem were obtained when the hyperbolic linear part is independent of the center variable or the hyperbolic direction is purely contractive (or expansive) (see, e.g., \cite{B-TAMS96,DR-QTDS10,D-QTDS21}). On the other hand, Takens Theorem was extended to the setting of random dynamical systems (\cite{LL-DCDS16}). Most of these works employed the so-called ``path method" or ``homotopy method" developed by Mather (\cite{Ma-AnnMath68}). In this method, a crucial intermediate result is to show that two systems having identical derivatives up to the $N$-th order on their common center manifold are $C^k$ conjugated. This result is known as   the Belitskii-Samavol Theorem (\cite{IL-AMS98}). After 2000, $C^\infty$ versions of Belitskii-Samavol Theorem were obtained in \cite{BK-JDDE02, Ray-ETDS09} on $\mathbb{R}^d$ and on Banach spaces, respectively.

In the case of  nonautonomous dynamics on $\mathbb{R}^d$, we are concerned with the system
\begin{equation}\label{DNDS}
x_{n+1}=F_n(x_n):=A_n x_n + f_n(x_n),\quad \forall n\in \mathbb{Z},
\end{equation}
where $A_n:\mathbb{R}^d\to \mathbb{R}^d$ are invertible matrices and $f_n:\mathbb{R}^d\to \mathbb{R}^d$ are smooth mappings.
The smooth conjugacy of two nonautonomous systems $(F_n)_{n\in\mathbb{Z}}$ and $(G_n)_{n\in\mathbb{Z}}$ is
a sequence $(\Psi_n)_{n\in\mathbb{Z}}$ of diffeomorphisms with bounded derivatives near $0$ such that
$$
\Psi_{n+1}\circ F_n=G_n\circ \Psi_n, \quad  \forall n\in \mathbb{Z}.
$$
In the uniformly hyperbolic case, i.e., when $(A_n)_{n\in\mathbb{Z}}$ admits a uniform exponential dichotomy, the nonautonomous version of Poincar\'e Theorem can be found in \cite{Siegm-JDE02,WL-JDE08}. In 2019, the nonautonomous version of
Sternberg Theorem was given in \cite[Theorem 5]{CDS-JDDE19}. The conditions of~\cite[Theorem 5]{CDS-JDDE19} are expressed in terms of the so-called
{\it dichotomy spectrum}  $\Sigma(A_n)$
which consists of all $\lambda >0$ with the property that the sequence $(\lambda^{-1}A_n)_{n\in \mathbb Z}$ does not admit an exponential dichotomy. Provided that $\sup_{n\in \mathbb Z}\|A_n\|<+\infty$ and $\sup_{n\in \mathbb Z}\|A_n^{-1}\|<+\infty$, we have that
\[
\Sigma(A_n)=\bigcup_{i=1}^r[a_i,b_i] \quad   \text{for some $r\le d$,}
\]
where
\[
0<a_1 \le b_1 <\ldots <a_r\le b_r.
\]
Provided that $1\notin \Sigma(A_n)$, it is proved in~\cite[Theorem 5]{CDS-JDDE19} that  for any $k\in\mathbb{N}$, there is an integer $N\in\mathbb{N}$ such that if
the non-resonance condition
\begin{align}\label{S-NC}
\left([a_1,b_1]^{t_1}\cdots[a_r,b_r]^{t_r}\right)\cap [a_j,b_j]=\emptyset,\quad\forall j=1,\ldots ,r
\end{align}
holds for integers $t_i\ge 0$ such that $2\le t_1+\ldots+t_r\le N$, where $[a,b]^s:=[a^s,b^s]$ and $[a,b]\cdot[c,d]:=[ac,bd]$ for $a,b,c,d,s\in\mathbb{R}$,
then the $C^N$ system $(F_n)_{n\in\mathbb{Z}}$ can be locally $C^k$ linearized.  For different smooth linearization results dealing with nonautonomous dynamics, we refer to~\cite{DZZ,DZZ-PLMS20}. In addition, we note that the study of nonautonomous linearization goes back to the seminal work of Palmer~\cite{Palmer}.

The main objective of the present  paper is to extend the Takens Theorem to the nonautonomous case when the linear part $(A_n)_{n\in\mathbb{Z}}$ of~\eqref{DNDS} admits a uniform exponential trichotomy.  Provided that $(A_n)_{n\in \mathbb Z}$ does not admit an exponential dichotomy, its dichotomy
spectrum has the form
$
\Sigma(A_n)=\bigcup_{i=1}^r[a_i,b_i]\cup [\nu_-, \nu_+]
$
with
\[
0<a_1\le b_1 <\ldots a_l\le b_l<\nu_{-}\le 1\le \nu_+<a_{l+1}\le b_{l+1}<\ldots a_r\le b_r,
\]
for some $r<d$ and $l\in \{1, \ldots, r-1\}$.
Then, we have the following main result of our paper.

\noindent {\bf Main Theorem}.
{\it For any $k\in\mathbb{N}$, there is  $N\in\mathbb{N}$ such that if the non-resonance conditions
\begin{align}\label{NR-Main}
\begin{split}
&([a_1,b_1]^{q_1}\cdots[a_r,b_r]^{q_r}\cdot[\nu_-,\nu_+]^{N})\cap [\nu_-,\nu_+]=\emptyset,
\\
&([a_1,b_1]^{t_1}\cdots[a_r,b_r]^{t_r}\cdot[\nu_-,\nu_+]^{N})\cap [a_j,b_j]=\emptyset,\quad \forall j=1,\ldots, r,
\end{split}
\end{align}
hold for integers $q_i,t_i\ge 0$ such that $1\le q_1+\ldots+q_r\le N$ and $2\le t_1+\ldots+t_r\le N$, then the $C^{N+1}$ system  $(F_n)_{n\in\mathbb{N}}$ is locally $C^k$ conjugated to
$$
(x_s,x_c,x_u)^T\mapsto(A_n^s(x_c)x_s, A_n^cx_c+f_n^c(x_c),A_n^u(x_c)x_u)^T,
$$
where $A_n^s(x_c)$ and $A_n^u(x_c)$ are linear operators, which
are $C^{N-1}$ in $x_c$, and $f_n^c$ is a $C^N$ map.
}

\noindent We note that the above theorem includes the hyperbolic case (when the center direction does not exist) in which we interpret~\eqref{NR-Main} as~\eqref{S-NC}. Moreover, \eqref{NR-Main} reduces to \eqref{S-NR} in the autonomous partially hyperbolic case (when  $\nu_-=\nu_+=1$).  One of the  difficulties in the proof of the above theorem is solving homological equations for the normal form theory which now involve a center variable (see \eqref{elimi-equa-chan} and \eqref{elimi-equa-proj} below), while another difficulty is finding the dichotomy spectrum of a certain matrix cocycle that is block lower triangular (see \eqref{block-low}). In order to overcome those difficulties, in comparison with \eqref{S-NR}, we need an additional term $[\nu_-,\nu_+]^{N}$ in \eqref{NR-Main} to guarantee certain spectral gap conditions (see \eqref{SG-cond-1} and \eqref{new-gap}).

The paper is organized as follows.
In Section 2, we recall the notions of exponential dichotomy and exponential trichotomy as well as that of the dichotomy spectrum. In Section 3, we give a center manifold result in the nonautonomous case   by lifting the nonautonomous system on $\mathbb{R}^d$ to the autonomous one which acts on a certain Hilbert space.
Then, we prove our main theorem in Section 4 by first establishing the nonautonomous version of Belitskii-Samavol Theorem by using the ``lifting method". Finally, in the Appendix, we give the Belitskii-Samavol Theorem on Hilbert spaces for autonomous dynamics. 

\section{Preliminaries}
\setcounter{equation}{0}
\noindent Let $\mathbb R^d$ denote the $d$-dimensional Euclidean space equipped with the Euclidean  norm $\| \cdot \|$. The associated operator norm on the space of linear operators on $\mathbb R^d$ will also be denoted by $\| \cdot \|$.
Let $(A_n)_{n \in \mathbb{Z}}$ be a two-sided sequence of invertible linear operators acting on $\mathbb R^d$.
We define the associated \emph{linear cocycle} by
\begin{eqnarray}
{\mathcal A}(m,n):=\begin{cases}
A_{m-1} \cdots A_n & \text{if $m > n$,} \\
{\Id} & \text{if $m=n$,}\\
A_m^{-1} \cdots A_{n-1}^{-1} & \text{if $m<n$,}
\end{cases}
\label{AAA}
\end{eqnarray}
where $\Id$ denotes the identity operator on $\mathbb R^d$.
We recall the notions of exponential dichotomy and exponential trichotomy as follows.
\begin{df}\label{ED}
The linear system $(A_n)_{n \in \mathbb{Z}}$ admits an exponential dichotomy if there exist constants $K>0$,
$0<\alpha_+<1<\beta_-$
and projections $\Pi^s_n, \Pi^u_n$, $n\in \mathbb Z$ on $\mathbb R^d$ such that the following conditions hold:
\begin{itemize}
\item for every $n\in \mathbb Z$,
$
\Pi^s_n+\Pi^u_n=\Id;
$
\item for every $n\in \mathbb Z$,
$\Pi^s_{n+1}A_n=A_n\Pi^s_n$;
\item for $m\ge n$,
\begin{align*}
 \| \mathcal A(m,n)\Pi^s_n \| \le K\alpha_+^{m-n},\qquad \| \mathcal A(n,m)\Pi^u_m \| \le K\beta_-^{n-m}.
\end{align*}
\end{itemize}
In addition, if there exist $\alpha_{-}\le \alpha_+$ and $\beta_{+}\ge \beta_-$ such that
\begin{align*}
 \| \mathcal A(n,m)\Pi^s_m \| \le K\alpha_-^{n-m},\qquad \|\mathcal A(m,n)\Pi^u_n\| \le K\beta_+^{m-n}
\end{align*}
hold for $m\ge n$, then $(A_n)_{n \in \mathbb{Z}}$ is said to admit a strong exponential dichotomy.
\end{df}

\begin{df}\label{ET}
The linear system $(A_n)_{n \in \mathbb{Z}}$ admits an exponential trichotomy if there exist constants
\[
K>0,\quad 0< \mu_+<\lambda_{-}\le 1\le  \lambda_+<\varrho_-,
\]
and projections $\Pi^s_n, \Pi^c_n, \Pi^u_n$, $n\in \mathbb Z$ on $\mathbb R^d$ such that the following conditions hold:
\begin{itemize}
\item for every $n\in \mathbb Z$,
\begin{equation}\label{pro1}
\Pi^s_n+\Pi^c_n+\Pi^u_n=\Id;
\end{equation}
\item for every $n\in \mathbb Z$,
\begin{equation}\label{pro2}
\Pi^s_{n+1}A_n=A_n\Pi^s_n, \quad \Pi^c_{n+1}A_n=A_n \Pi^c_n, \quad \Pi^u_{n+1}A_n=A_n \Pi^u_n;
\end{equation}
\item for $m\ge n$,
\begin{alignat}{2}
 &\| \mathcal A(m,n)\Pi^s_n \| \le K\mu_+^{m-n},&\qquad & \| \mathcal A(n,m)\Pi^u_m\| \le K\varrho_-^{n-m},
 \label{tric1}
 \\
 &\|\mathcal A(m,n)\Pi^c_n \| \le K\lambda_+^{m-n},&\qquad&\|\mathcal A(n,m)\Pi^c_m\| \le K\lambda_-^{n-m}.\label{tric2}
\end{alignat}
\end{itemize}
In addition, if there exist $0<\mu_{-}\le \mu_+$ and $\varrho_+\ge \varrho_{-}$ such that
\begin{align}
 \| \mathcal A(n,m)\Pi^s_m \| \le K\mu_-^{n-m},\qquad \|\mathcal A(m,n)\Pi^u_n\| \le K\varrho_+^{m-n}
 \label{tric3}
\end{align}
hold for $m\ge n$, then $(A_n)_{n \in \mathbb{Z}}$ is said to admit a strong exponential trichotomy.
\end{df}
\begin{rmk}
It is easy to show that a sequence $(A_n)_{n\in \mathbb Z}$ which admits an exponential dichotomy {\rm (}resp. trichotomy{\rm )} admits a strong exponential dichotomy {\rm (}resp. trichotomy{\rm )} if and only if
\begin{equation}\label{bnd}
    \sup_{n\in \mathbb Z}\|A_n\|<+\infty \quad \text{and} \quad \sup_{n\in \mathbb Z}\|A_n^{-1}\|<+\infty.
\end{equation}
Indeed, suppose for example that $(A_n)_{n\in \mathbb Z}$ admits a strong exponential trichotomy. Then, for $m\ge n$, \eqref{pro1}, \eqref{tric1}-\eqref{tric3} give that
\begin{align}
\|\mathcal A(m,n)\| &= \|\mathcal A(m,n)(\Pi^s_n + \Pi^c_n+\Pi^u_n)\|\notag
\\
&\le \|\mathcal A(m,n) \Pi^s_n \| + \|\mathcal A(m,n) \Pi^c_n \|+\|\mathcal A(m,n) \Pi^u_n \|
\notag\\
&\le 3 K \varrho_+^{m-n},
\label{est1}
\end{align}
and
\begin{equation}\label{est2}
\begin{split}
\|\mathcal A(n,m)\| &= \|\mathcal A(n,m)(\Pi^s_n + \Pi^c_n+\Pi^u_n)\|
\\
&\le \|\mathcal A(n,m) \Pi^s_n \| + \|\mathcal A(n,m) \Pi^c_n \|+\|\mathcal A(n,m) \Pi^u_n \|
\\
&\le 3 K \mu_-^{n-m}.
\end{split}
\end{equation}
Clearly, \eqref{est1} and~\eqref{est2} imply~\eqref{bnd}. The converse implication is  obvious.
\end{rmk}

\begin{rmk}\label{1100}
Assume that a sequence $(A_n)_{n\in \mathbb Z}$ admits a strong exponential trichotomy. Then, it is easy to verify the following:
\begin{itemize}
\item for $\gamma \in (0, \mu_{-})$, the sequence $(\gamma^{-1}A_n)_{n\in \mathbb Z}$ admits a strong exponential dichotomy with respect to projections $\Pi_n^{\gamma,s}=0$ and $\Pi_n^{\gamma,u}=\Id$ for $n\in \mathbb Z$;
\item for $\gamma \in (\mu_+, \lambda_{-})$, the sequence $(\gamma^{-1}A_n)_{n\in \mathbb Z}$ admits a strong exponential dichotomy with respect to projections $\Pi_n^{\gamma,s}=\Pi_n^s$ and $\Pi_n^{\gamma,u}=\Pi_n^c+\Pi_n^u$ for $n\in \mathbb Z$;
\item for $\gamma \in (\lambda_+, \varrho_{-})$, the sequence $(\gamma^{-1}A_n)_{n\in \mathbb Z}$ admits a strong exponential dichotomy with respect to projections $\Pi_n^{\gamma,s}=\Pi_n^s+\Pi_n^c$ and $\Pi_n^{\gamma,u}=\Pi_n^u$ for $n\in \mathbb Z$;
\item for $\gamma \in (\varrho_+, \infty)$, the sequence $(\gamma^{-1}A_n)_{n\in \mathbb Z}$ admits a strong exponential dichotomy with respect to projections $\Pi_n^{\gamma,s}=\Id$ and $\Pi_n^{\gamma,u}=0$ for $n\in \mathbb Z$.
\end{itemize}
\end{rmk}

Let us fix an arbitrary sequence $(A_n)_{n\in \mathbb Z}$ that admits a strong exponential trichotomy and  which does not admit an exponential dichotomy.
We  define the {\it dichotomy spectrum} of $(A_n)_{n \in \mathbb{Z}}$ by
\begin{align}\label{def-sig}
\Sigma(A_n):=\Big\{&\gamma\in (0,\infty):
(\gamma^{-1}A_n)_{n\in\mathbb{Z}} ~\mbox{admits no exponential dichotomy}\Big\}.
\end{align}
According to the Spectral Theorem (\cite{AS00}) and Remark~\ref{1100}, we know that there is  $r\le d-1$ such that
\begin{align}\label{SA}
\Sigma=\Sigma(A_n)=\bigcup_{i=1}^r[a_i,b_i]\cup[\nu_-,\nu_+]
\subset [\mu_-,\mu_+]\cup[\lambda_-,\lambda_+]\cup[\varrho_-,\varrho_+],
\end{align}
where
\[
\mu_{-}\le a_1\le b_1<\ldots< a_\ell \le b_\ell \le \mu_+, \quad  \lambda_{-}\le \nu_{-}\le 1 \le \nu_+\le \lambda_+
\]
and
\[
\varrho_{-}\le a_{\ell+1}\le b_{\ell+1}<\ldots<a_r\le b_r\le \varrho_+,
\]
with a certain $\ell\in \{ 0,\ldots,r\}$. In the sequel we will assume that $r\ge 1$, i.e., that $\Sigma$ does not consist only of $[\nu_{-}, \nu_+]$.  Observe that in~\eqref{SA} we have supposed that $\Sigma \cap [\lambda_{-}, \lambda_+]$ consists of a single interval. It can be easily seen that this can be assumed without any  loss of generality, as it can be achieved by redefining projections (and corresponding constants) in the trichotomy of the sequence $(A_n)_{n\in \mathbb Z}$.
Moreover,
remark that when $\ell=0$ (or $\ell=r$), the system has only expansive (or contractive) and center directions, which is a special case of the partial hyperbolicity. In what follows, we do not discuss the case of $\ell\in \{0,r\}$ because it can be reduced to the case of $\ell\in \{ 1,\ldots,r-1\}$
by adding some dummy variables to the expansive (or contractive) part.
Finally, we note that in \eqref{SA} the union $\bigcup_{i=1}^r[a_i,b_i]\subset [\mu_-,\mu_+]\cup[\varrho_-,\varrho_+]$ corresponds to the hyperbolic part of $\Sigma$ as  $1\notin \bigcup_{i=1}^r[a_i,b_i]$, while the interval $[\nu_-,\nu_+]$ corresponds to the center part of $\Sigma$ as $1\in [\nu_-,\nu_+]\subset [\lambda_-,\lambda_+]$.

Furthermore,
according to \cite{Siegm-JDEA02},
we may assume that $(A_n)_{n\in \mathbb Z}$ is block-diagonalized, i.e.,
$$
A_n={\rm diag}(A_n^1,\ldots ,A_n^\ell, A_n^c,A_n^{\ell+1},\ldots , A_n^r)=:{\rm diag}(A_n^s, A_n^c,A_n^u),
$$
where \begin{equation}\label{red}\Sigma(A_n^i)=[a_i,b_i] \  \text{for $i=1,\ldots ,r$ and} \quad  \Sigma(A_n^c)=[\nu_-,\nu_+].
\end{equation}The above block-diagonal form of operators $A_n$ generates the
corresponding decomposition of $\mathbb R^d$ given by
\begin{align}\label{DS}
\mathbb{R}^d=X_1\oplus\cdots\oplus X_\ell\oplus X_c\oplus X_{\ell+1}\oplus\cdots \oplus X_r.
\end{align}
For convenience, denote
\begin{align*}
&X_s:=X_1\oplus\ldots \oplus X_\ell,\quad
X_u:=X_{\ell+1}\oplus \ldots \oplus X_r,\quad X_{su}:=X_s\oplus X_u,
\\
&x_s:=x_1+\ldots + x_\ell\in X_s,\qquad x_u:=x_{\ell+1}+\ldots+x_r\in X_u
\end{align*}
for $x_i\in X_i$, $i=1,\ldots ,r$,
and denote
$$
\pi_ix:=x_i, \quad\pi_sx:=x_s,\quad \pi_cx:=x_c,\quad\pi_ux:=x_u,\quad \pi_{su}:=\pi_s+\pi_u,
$$
for $x=x_s+x_c+x_u\in \mathbb R^d$.
Taking into account~\eqref{red}, it follows that for a  sufficiently small $\varsigma>0$ there exists $C>0$ such that
\begin{align}\label{new-ET}
\begin{split}
&\|{\mathcal A}(m,n)\pi_ix\|\le C(b_i+\varsigma)^{m-n}\|\pi_ix\|,\quad m\ge n,
\\
&\|{\mathcal A}(m,n)\pi_ix\|\le C(a_i-\varsigma)^{m-n}\|\pi_ix\|,\quad m\le n,
\\
&\|{\mathcal A}(m,n)\pi_cx\|\le C(\nu_+ +\varsigma)^{m-n}\|\pi_cx\|,\quad m\ge n,
\\
&\|{\mathcal A}(m,n)\pi_cx\|\le C(\nu_- -\varsigma)^{m-n}\|\pi_cx\|,\quad m\le n,
\end{split}
\end{align}
 for $i=1, \ldots ,r$.

\begin{rmk}
Since another formula $\mathbb{R}^d=X_1\times\cdots\times X_\ell\times X_c\times X_{\ell+1}\times\cdots \times X_r$ is essentially the same as (\ref{DS}), in what follows, we will use either of the two formulae according to our needs. Namely, we will selectively use $x_1+\cdots+x_r$ or $(x_1,...,x_r)^T$.
\end{rmk}

\begin{rmk}\label{simpl}
In order to simplify the notation, in the sequel we will assume that estimates in~\eqref{new-ET} hold with $\varsigma=0$. This will not cause any issues since every strict inequality {\rm (}see for example \eqref{NR-cond-1}-\eqref{NR-cond-2}{\rm )} for $a_i,b_i,\nu_-$ and $\nu_+$ holds if and only if it holds for $a_i-\varsigma,b_i+\varsigma,\nu_--\varsigma$ and $\nu_++\varsigma$ with a  sufficiently  small $\varsigma>0$.
 \end{rmk}

Next, set
\begin{equation}\label{spaceY}
 Y:=\Big{\{} \mathbf x=(x_n)_{n\in \mathbb{Z}} \subset \mathbb R^d: \sum_{n\in\mathbb{Z}}\|x_n\|^2 <\infty \Big{\}},
\end{equation}
which is a Banach space when equipped with the norm
$
\|\mathbf x\|:=(\sum_{n\in\mathbb{Z}}\|x_n\|^2)^{\frac{1}{2}}.
$
For the sake of simplicity we slightly abuse the notation by denoting the norm on $Y$ and $\mathbb R^d$ by the same symbol.
In addition,  $Y$ is actually a Hilbert space with respect to the scalar product given by
\begin{equation}\label{in-pro}
\langle \mathbf x, \mathbf y\rangle_Y:=\sum_{n\in \mathbb Z} \langle x_n, y_n\rangle \quad \mathbf x=(x_n)_{n\in \mathbb Z}, \ \mathbf y=(y_n)_{n\in \mathbb Z}\in Y,
\end{equation}
where $\langle \cdot, \cdot \rangle$ is the Euclidean scalar  product on $\mathbb R^d$.



This enables us to define a  linear operator $\mathbb A \colon Y \to Y$ by
\begin{equation}\label{eq:A}
(\mathbb A \mathbf x)_n:=A_{n-1}x_{n-1}, \quad \forall\mathbf x=(x_n)_{n\in \mathbb{Z}} \in Y,~ \forall n \in \mathbb{Z}.
\end{equation}
It follows easily from~\eqref{bnd} that $\mathbb A$ is bounded. Indeed,
\[
\| \mathbb A \mathbf x\|^2=\sum_{n\in \mathbb Z}\|A_{n-1}x_{n-1}\|^2 \le \left (\sup_{n\in \mathbb Z}\|A_n\|\right )^2\sum_{n\in \mathbb Z}\|x_{n-1}\|^2 =\left (\sup_{n\in \mathbb Z}\|A_n\|\right )^2 \|\mathbf x\|^2,
\]
for each $\mathbf x=(x_n)_{n\in \mathbb Z}\in Y$, which yields that $\mathbb A$ is well-defined and bounded. Moreover, $\mathbb A$
is invertible and its inverse is given by
\[
(\mathbb A^{-1} \mathbf x)_n=A_{n}^{-1}x_{n+1}, \quad \forall\mathbf x=(x_n)_{n\in \mathbb{Z}} \in Y,~ \forall n \in \mathbb{Z}.
\]
Indeed, using the second assumption in~\eqref{bnd}, we can easily conclude that the operator defined by the above equation is well-defined and bounded. Next,
\[
(\mathbb A \mathbb A^{-1}\mathbf x)_n=A_{n-1} (\mathbb A^{-1} \mathbf x)_{n-1}=A_{n-1}A_{n-1}^{-1}x_n=x_n,
\]
for $n\in \mathbb Z$ and $\mathbf x=(x_n)_{n\in \mathbb Z}\in Y$. Thus, $\mathbb A \mathbb A^{-1}=\Id_Y$, where $\Id_Y$ denotes the identity operator on $Y$. Similarly, $\mathbb A^{-1}\mathbb A=\Id_Y$, which yields that $\mathbb A^{-1}$ is the inverse of $\mathbb A$.
\begin{rmk}
The reason for considering space $Y$ given by~{\rm \eqref{spaceY}} is that it has a  structure of a Hilbert space. This is effectively used in the proof of Theorem~{\rm \ref{BS-thm-nonauto}}, which in turn relies on Theorem~\ref{ck-conjuga}. One can observe that the proof of Theorem~\ref{ck-conjuga} uses the existence of the so-called smooth bump functions, which are known to exist on Hilbert spaces and whose existence in general fails to hold on Banach spaces.

One also observes that the spectrum of $\mathbb A$ is independent of whether we consider $\mathbb A$ as an operator on $Y$ or as an operator on the space
\[
Y_\infty:=\left \{ \mathbf x=(x_n)_{n\in \mathbb Z}\subset \mathbb R^d: \  \| \mathbf x\|_\infty:=\sup_{n\in \mathbb Z}\|x_n\|<+\infty \right \}.
\]
Indeed, this follows from the  results in~\cite{BDV} which imply that the following three conditions are equivalent:
\begin{enumerate}
\item[(a)] a sequence $(B_n)_{n\in \mathbb Z}$ of  linear operators on $\mathbb R^d$ admits an exponential dichotomy;
\item[(b)] for each $\mathbf y=(y_n)_{n\in \mathbb Z}\in Y$, there exists a unique $\mathbf x=(x_n)_{n\in \mathbb Z}\in Y$ such that
\begin{equation}\label{admiss}
x_{n+1}-B_n x_n=y_{n+1} \quad n\in \mathbb Z;
\end{equation}
\item[(c)]for each $\mathbf y=(y_n)_{n\in \mathbb Z}\in Y_\infty$, there exists a unique $\mathbf x=(x_n)_{n\in \mathbb Z}\in Y_\infty$ such that~\eqref{admiss} holds.
\end{enumerate}
Let $\mathbb A_\infty$ denote the operator $\mathbb A$ viewed as an operator on $Y_\infty$. Note that $\mathbb A_\infty$ is also invertible.
Then, from $(a)\iff (b)\iff (c)$, we have that
for $\lambda \in \mathbb R\setminus \{0\}$,
\[
\begin{split}
\lambda \notin \sigma(\mathbb A_\infty) &\iff  \  \text{the operator $\mathbf x \mapsto (x_n-\lambda^{-1}A_{n-1}x_{n-1})_{n\in \mathbb Z}$ is invertible on $Y_\infty$} \\
&\iff \text{sequence $\left (\frac 1 \lambda A_n \right )_{n\in \mathbb Z}$ admits exponential dichotomy}\\
&\iff \text{the operator $\mathbf x \mapsto (x_n-\lambda^{-1}A_{n-1}x_{n-1})_{n\in \mathbb Z}$ is invertible on $Y$} \\
&\iff \lambda \notin \sigma(\mathbb A).
\end{split}
\]
Since $0$ is not contained in $\sigma(\mathbb A)$ and $\sigma(\mathbb A_\infty)$, we conclude that $\sigma(\mathbb A)=\sigma(\mathbb A_\infty)$. Moreover,
\[
\sigma(\mathbb A)\cap (0, \infty)=\sigma(\mathbb A_\infty)\cap (0, \infty)=\Sigma(A_n).
\]
Since $Y_\infty$ does not have a structure of a Hilbert space, it is more convenient for us to view $\mathbb A$ as an operator on $Y$.

Finally, we note that the conditions of our main result {\rm (}Theorem~{\rm \ref{Thm-Takens})} are stated directly in terms of the sequence $(A_n)_{n\in \mathbb Z}$ and its nonlinear perturbations. Therefore, the use of $\mathbb A$ {\rm (}and the space on which it acts{\rm )} is only of auxiliary nature. It is used to establish the existence of nonautonomous center manifolds by applying known results about the existence of center manifolds for autonomous dynamics. Moreover, it is used to deduce Theorem~{\rm \ref{BS-thm-nonauto}} from Theorem~{\rm \ref{ck-conjuga}}.
\end{rmk}
We define projections $\Pi_{Y}^s$, $\Pi_{Y}^u$ and $\Pi_{Y}^c$ on $Y$ by
\[
(\Pi_{Y}^s \mathbf x)_n:=\Pi^s_nx_n, \quad (\Pi_{Y}^c \mathbf x)_n:=\Pi^c_nx_n, \quad \text{and}\quad (\Pi_{Y}^u \mathbf x)_n:=\Pi^u_nx_n,
\]
for $n\in \mathbb Z$ and $\mathbf x=(x_n)_{n\in \mathbb Z}\in Y$.
One can easily verify that:\begin{itemize}
    \item $\Pi_{Y}^s+\Pi_{Y}^c+\Pi_{Y}^u=\Id;$
    \item $\mathbb A \Pi_{Y}^* = \Pi_{Y}^* \mathbb A$ for $*=s,c,u$;
    \item for $m\ge 0$,
    \begin{align}\label{smA}
    \begin{split}
        &\| \mathbb A^m \Pi_{Y}^s \| \le K \mu_+^m,\qquad \| \mathbb A^{-m} \Pi_{Y}^s \| \le K \mu_-^{-m},
        \\
        & \| \mathbb A^m \Pi_{Y}^c \| \le K \lambda_+^m,\qquad \| \mathbb A^{-m} \Pi_{Y}^c \| \le K \lambda_-^{-m},
        \\
        &\| \mathbb A^m \Pi_{Y}^u \| \le K \varrho_+^m,\qquad \| \mathbb A^{-m} \Pi_{Y}^u \| \le K \varrho_-^{-m}.
        \end{split}
    \end{align}
\end{itemize}
Let us verify the first inequality in~\eqref{smA}. For $\mathbf x=(x_n)_{n\in \mathbb Z}\in Y$ we have that
\[
\begin{split}
\|\mathbb A^m \Pi_{Y}^s \mathbf x\|^2=\sum_{n\in \mathbb Z}\|(\mathbb A^m \Pi_{Y}^s \mathbf x)_n\|^2 &=\sum_{n\in \mathbb Z}\|\mathcal A(n, n-m)\Pi_{n-m}^s x_{n-m}\|^2 \\
&\le K^2  \mu_+^{2m}\sum_{n\in \mathbb Z}\|x_{n-m}\|^2=K^2\mu_+^{2m}\|\mathbf x\|^2,
\end{split}
\]
which yields the desired claim. One can similarly verify all other inequalities in~\eqref{smA}.
In the sequel, by $Y^*$ we will denote the image of the projection $\Pi_{Y}^*$ for $*=s, c, u$.

\section{Center manifolds}\label{CM}
\setcounter{equation}{0}

\noindent This section is devoted to
 the existence of smooth center manifolds of a nonautonomous system whose linear part $(A_n)_{n\in\mathbb{Z}}$ admits a strong exponential trichotomy (but does not admit an exponential dichotomy). Take $N\in \mathbb N$ and consider a sequence $(f_n)_{n\in \mathbb Z}$ of $C^{N+1}$ maps $f_n\colon \mathbb R^d \to \mathbb R^d$.
In what follows, by $d_x^if_n$ we denote the $i$-th derivative of $f_n$ at $x$. Moreover, we will write $d_xf_n$ instead of $d_x^1f_n$. Suppose that:
\begin{itemize}
\item  $(A_n)_{n\in \mathbb Z}$ admits a strong exponential trichotomy (but does not admit an exponential dichotomy). In particular,
$\Sigma(A_n)$ satisfies \eqref{SA};
\item for $n\in \mathbb Z$,
\begin{equation}\label{zero}
f_n(0)=0 \quad  \text{and} \quad d_0f_n=0;
\end{equation}
\item for $n\in \mathbb Z$, $2\le i\le  N+1$ and $x\in \mathbb R^d$,
\begin{equation}\label{small-Lip}
\|d_xf_n\|\le \delta\quad \text{and} \quad\|d^i_xf_n\| \le M,
\end{equation}
where $\delta, M>0$ are some constants. In the sequel, we will assume that $\delta$ can be made sufficiently small.
\end{itemize}
Let $\Ima \Pi^*_n$ denote the image of $\Pi^*_n$
for $*=s,c,u$.  Moreover, set
\[
F_n:=A_n+f_n \colon \mathbb R^d \to \mathbb R^d, \quad n\in \mathbb Z.
\]
Then, we have the following main result of this section.

\begin{prop}\label{center}
Let $(F_n)_{n\in\mathbb{Z}}$ be given above and suppose that the spectral gap condition
\begin{align}\label{gap-cond}
b_\ell<\nu_-^N,\qquad \nu_+^N< a_{\ell+1},
\end{align}
holds. Then, there exists a sequence of $C^N$ maps $\phi_n\colon \Ima \Pi^c_n \to \Ima \Pi^s_n \oplus \Ima \Pi^u_n $, $n\in \mathbb Z$, such that for $n\in \mathbb Z$:
\begin{itemize}
\item $\phi_n(0)=0$, $d_0\phi_n=0$ and $\|d_{x}^i\phi_n\|\le \tilde M$ near $x=0$   for  $i=2,\ldots ,N$, with respect to some constant $\tilde M>0$ independent on $n$ and $i$;
\item $F_n(\Gamma_n)=\Gamma_{n+1}$, where
$
\Gamma_n:=\{x_c+\phi_n(x_c): \ x_c\in \Ima \Pi^c_n\}
$
is the center manifold;
\item for any $x_c\in \Ima \Pi^c_n$ and $\gamma_1\in (\mu_+,\nu_-^N),\gamma_2\in (\nu_+^N,\varrho_-)$, we have
\begin{equation}\label{cm1}
\sup_{m> n}\left\{\gamma_2^{-(m-n)}\|(F_{m-1}\circ \ldots \circ F_n)(x_c+\phi_n(x_c))\|\right\}<\infty,
\end{equation}
and
\begin{equation}\label{cm2}
\sup_{m<n}\left\{\gamma_1^{-(m-n)}\|(F_m^{-1}\circ \ldots \circ F_{n-1}^{-1})(x_c+\phi_n(x_c))\|\right\}<\infty.
\end{equation}
\end{itemize}
\end{prop}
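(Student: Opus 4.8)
The plan is to reduce the statement to a $C^N$ center manifold theorem for an \emph{autonomous} map on the Hilbert space $Y$ of Section~2, and then to transfer the conclusion back to $\mathbb R^d$ fiber by fiber. Recalling \eqref{eq:A}, I would introduce the lifted map $\mathbb F:=\mathbb A+\mathbf f\colon Y\to Y$, where $(\mathbf f\mathbf x)_n:=f_{n-1}(x_{n-1})$, so that $(\mathbb F\mathbf x)_n=F_{n-1}(x_{n-1})$. Using \eqref{zero}, \eqref{small-Lip} and the mean value inequality one checks that $\mathbf f$ maps $Y$ into $Y$, is of class $C^N$ with derivatives of orders $1,\dots,N$ globally bounded (the first one by $\delta$), and satisfies $\mathbf f(0)=0$, $d_0\mathbf f=0$; hence $\mathbb F(0)=0$ and $d_0\mathbb F=\mathbb A$. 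The only delicate point is the smoothness of this ``diagonal Nemytskii'' operator on $\ell^2$: its $i$-th derivative at $\mathbf x$ acts by $(\mathbf h_1,\dots,\mathbf h_i)\mapsto\big(d^i_{x_n}f_n(h_{1,n},\dots,h_{i,n})\big)_{n\in\mathbb Z}$, and the required estimates on Taylor remainders and on the Lipschitz modulus of the $i$-th derivative reduce, via \eqref{small-Lip}, to the elementary bound $\sum_n\prod_{j=1}^i\|h_{j,n}\|^2\le\big(\prod_{j=2}^i\|\mathbf h_j\|^2\big)\|\mathbf h_1\|^2$ (which uses the continuous inclusions $\ell^2\hookrightarrow\ell^p$, $p\ge2$); the hypothesis that the $f_n$ are $C^{N+1}$ with uniform bounds through order $N+1$ is exactly what makes $D^N\mathbf f$ globally Lipschitz.

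Next, recall the $\mathbb A$-invariant splitting $Y=Y^s\oplus Y^c\oplus Y^u$. Refining \eqref{smA} by means of the block estimates \eqref{new-ET} (with $\varsigma=0$ by Remark~\ref{simpl}) one obtains, for $m\ge0$, $\|\mathbb A^m\Pi_Y^s\|\le Cb_\ell^m$, $\|\mathbb A^{-m}\Pi_Y^u\|\le Ca_{\ell+1}^{-m}$, $\|\mathbb A^m\Pi_Y^c\|\le C\nu_+^m$ and $\|\mathbb A^{-m}\Pi_Y^c\|\le C\nu_-^{-m}$, where $b_\ell<1\le\nu_+$ and $a_{\ell+1}>1$; relative to these rates, \eqref{gap-cond} is precisely the spectral gap condition of order $N$ ($b_\ell<\nu_-^N$ on the stable side, $\nu_+^N<a_{\ell+1}$ on the unstable side) that guarantees a $C^N$ center manifold. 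I would then invoke a standard center manifold theorem for a $C^N$ map with globally Lipschitz $N$-th derivative on a Hilbert space whose linear part has such a trichotomic splitting subject to this gap condition (for instance in the Lyapunov--Perron formulation): shrinking $\delta$ if necessary, it yields a $C^N$ map $\Phi\colon Y^c\to Y^s\oplus Y^u$ with $\Phi(0)=0$, $d_0\Phi=0$ and derivatives of orders $2,\dots,N$ bounded near $0$, whose graph $\mathcal W^c:=\{\mathbf y+\Phi(\mathbf y):\mathbf y\in Y^c\}$ is invariant, $\mathbb F(\mathcal W^c)=\mathcal W^c$, and which consists exactly of those $\mathbf z\in Y$ possessing a full $\mathbb F$-orbit with $\sup_{m\ge0}\gamma_2^{-m}\|\mathbb F^{m}\mathbf z\|<\infty$ and $\sup_{m\ge0}\gamma_1^{m}\|\mathbb F^{-m}\mathbf z\|<\infty$, for $\gamma_1\in(\mu_+,\nu_-^N)$ and $\gamma_2\in(\nu_+^N,\varrho_-)$ — intervals which are nonempty once $\mu_+,\varrho_-$ are taken close enough to $b_\ell,a_{\ell+1}$, as allowed by the reductions recalled in Section~2.

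The crucial step is to show that $\Phi$ is \emph{fiberwise}, i.e. $(\Phi(\mathbf y))_n=\phi_n(y_n)$ for suitable $C^N$ maps $\phi_n\colon\Ima\Pi^c_n\to\Ima\Pi^s_n\oplus\Ima\Pi^u_n$. I would deduce this from the ``diagonal-shift'' structure of the data: since $(\mathbb A\mathbf x)_n$ and $(\mathbf f\mathbf x)_n$ depend only on $x_{n-1}$, writing the Lyapunov--Perron fixed-point equation for the orbit through a given $\mathbf y\in Y^c$ in coordinates reveals that, for each fixed $n_0$, the subsystem governing the diagonal entries $\{(\mathbf u_m)_{n_0+m}:m\in\mathbb Z\}$ is closed and is verbatim the Lyapunov--Perron equation of the nonlinear cocycle $\mathcal F(j,k;\cdot):=F_{k-1}\circ\cdots\circ F_j$ based at $n_0$, with estimates uniform in $n_0$ by \eqref{new-ET}; hence $(\Phi(\mathbf y))_{n_0}$ depends only on $y_{n_0}$, which defines $\phi_{n_0}$ with bounds uniform in $n_0$. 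Then $\phi_n$ inherits from $\Phi$ — via pre- and post-composition with the bounded linear maps $\Ima\Pi^c_n\hookrightarrow Y^c$ and $Y\to\mathbb R^d$ — the properties $\phi_n\in C^N$, $\phi_n(0)=0$, $d_0\phi_n=0$ and $\|d^i_x\phi_n\|\le\tilde M$ near $0$ for $i=2,\dots,N$; the invariance $F_n(\Gamma_n)=\Gamma_{n+1}$ is $\mathbb F(\mathcal W^c)=\mathcal W^c$ read in the $n$-th coordinate; and \eqref{cm1}--\eqref{cm2} follow by applying the orbit-growth characterization of $\mathcal W^c$ to the element of $Y$ supported at the single index $n$ with value $x_c+\phi_n(x_c)$ there, whose $\mathbb F$-iterates remain supported at a single index with value given by the corresponding composition of the $F_j$ (resp. their inverses), so that the $Y$-norm bounds become exactly \eqref{cm1}--\eqref{cm2}.

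I expect the two main obstacles to be: (i) verifying that the lifted nonlinearity $\mathbf f$ is genuinely $C^N$ on the infinite-dimensional space $Y$ — the familiar subtlety for substitution-type operators on $\ell^p$, here circumvented by the uniform bounds \eqref{small-Lip} on all the $f_n$ — and (ii) the fiberwise decoupling of $\Phi$, which is the heart of the lifting method and requires carefully tracking the diagonal-shift structure through the Lyapunov--Perron construction.
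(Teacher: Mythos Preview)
Your strategy is the paper's: lift to $\mathbb F=\mathbb A+\mathbf f$ on $Y$, verify $C^N$-smoothness of the diagonal Nemytskii operator (this is Lemma~\ref{lemm}), apply an autonomous $C^N$ center-manifold theorem under the gap condition (the paper cites~\cite{Gallay}), set $\phi_n(x_c):=(\Phi(\mathbf v))_n$ for $\mathbf v\in Y^c$ supported at $n$, and read off invariance and~\eqref{cm1}--\eqref{cm2} from the orbit-growth characterization~\eqref{cm3} applied to singly-supported elements --- exactly as you describe in your last paragraph.

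The one place where you diverge is in establishing the fiberwise property $(\Phi(\mathbf y))_n=\phi_n(y_n)$. You propose to obtain it by decoupling the Lyapunov--Perron fixed-point system along the diagonals $\{(\mathbf u_m)_{n_0+m}\}_{m}$, which does work (and makes the connection to the nonautonomous LP equation explicit), but requires reopening the fixed-point construction. The paper instead proves this as Lemma~\ref{pp} by a short swap argument that uses only the characterization~\eqref{cm3}: given $z_k^1=z_k^2$, replace the $k$-th entry of $\Phi(\mathbf z^1)$ by $(\Phi(\mathbf z^2))_k$, observe that the shift structure $(\mathbb F^m\mathbf z)_n=F_{n-1}\circ\cdots\circ F_{n-m}(z_{n-m})$ leaves the growth bounds intact, conclude the modified point lies on $\Gamma$, and derive a contradiction. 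This is slicker since it treats the autonomous center manifold as a black box via its invariant characterization; your route is more computational but equally valid.
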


Remark that~\eqref{gap-cond} holds if
\begin{equation}\label{tricc}
\mu_+<\lambda_-^N \quad \text{and} \quad \lambda_+^N< \varrho_{-},
\end{equation}
which is expressed purely in terms of constants associated with the trichotomy of the sequence $(A_n)_{n\in \mathbb Z}$.

\begin{proof}[Proof of Proposition {\rm \ref{center}}]
We define $\mathbb F\colon Y \to Y$ by
\begin{equation}\label{F}
(\mathbb F (\mathbf x))_n:=F_{n-1}(x_{n-1}):=A_{n-1}x_{n-1}+f_{n-1}(x_{n-1}), \quad \forall n\in \mathbb Z,
\end{equation}
for $\mathbf x=(x_n)_{n\in \mathbb Z}\in Y$, and recall that $\mathbb A \colon Y\to Y$ is given by~\eqref{eq:A}. Let us first note that $\mathbb F$ is well-defined. Indeed, for $\mathbf x=(x_n)_{n\in \mathbb Z}\in Y$ we have that
\[
\sum_{n\in \mathbb Z}\|(\mathbb F (\mathbf x))_n\|^2 \le  \left (\sup_{n\in \mathbb Z}\|A_n\|+\delta \right)^2\sum_{n\in \mathbb Z}\|x_{n}\|^2<+\infty
\]
by (\ref{bnd}), which yields that $\mathbb F(\mathbf x)\in Y$.
\begin{lm}\label{lemm}
$\mathbb F$ is $C^N$, $\mathbb F(\boldsymbol 0)=\boldsymbol 0$ and $d_{\boldsymbol 0}\mathbb F=\mathbb A$. Moreover,
\begin{equation}\label{LIP}
\|d_{\bf x}(\mathbb F-\mathbb A)\| \le  \delta, \quad \|d^i_{\bf x} \mathbb F\| \le  M,\quad \forall {\bf x}\in Y,
\end{equation}
for $2\le i \le N$,
where $\delta>0$ and $M>0$ are  as in \eqref{small-Lip}.
\end{lm}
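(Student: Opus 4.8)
The plan is to write $\mathbb F=\mathbb A+\mathbb G$, where $\mathbb G\colon Y\to Y$ is the (nonautonomous) substitution operator $(\mathbb G(\mathbf x))_n:=f_{n-1}(x_{n-1})$, and to move all regularity questions from $\mathbb F$ onto $\mathbb G$. Since $\mathbb A$ is a bounded linear operator on $Y$, it is $C^\infty$ with $d_{\mathbf x}\mathbb A=\mathbb A$ and $d^i_{\mathbf x}\mathbb A=0$ for $i\ge 2$, and $\mathbb A\boldsymbol 0=\boldsymbol 0$; also $\mathbb G$ maps $Y$ into $Y$, as already seen for $\mathbb F$ (or directly, since $f_n(0)=0$ and $\|d_xf_n\|\le\delta$ give $\|f_{n-1}(x_{n-1})\|\le\delta\|x_{n-1}\|$). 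Hence it suffices to prove that $\mathbb G$ is $C^N$ with $\mathbb G(\boldsymbol 0)=\boldsymbol 0$, $d_{\boldsymbol 0}\mathbb G=0$, $\|d_{\mathbf x}\mathbb G\|\le\delta$ and $\|d^i_{\mathbf x}\mathbb G\|\le M$ for $2\le i\le N$ and all $\mathbf x\in Y$. Granting this, $\mathbb F(\boldsymbol 0)=\boldsymbol 0$ follows from $f_n(0)=0$, $d_{\boldsymbol 0}\mathbb F=\mathbb A$ from $d_0f_n=0$, and~\eqref{LIP} from $d_{\mathbf x}(\mathbb F-\mathbb A)=d_{\mathbf x}\mathbb G$ together with $d^i_{\mathbf x}\mathbb F=d^i_{\mathbf x}\mathbb G$ for $i\ge 2$.

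The natural candidate for the $i$-th derivative of $\mathbb G$ at $\mathbf x$ is the ``diagonal'' $i$-linear operator $D_i(\mathbf x)$ on $Y$, acting fiberwise by
\[
\bigl(D_i(\mathbf x)(\mathbf h^{(1)},\dots,\mathbf h^{(i)})\bigr)_n:=\bigl(d^i_{x_{n-1}}f_{n-1}\bigr)\bigl(h^{(1)}_{n-1},\dots,h^{(i)}_{n-1}\bigr),
\]
with the convention $D_0(\mathbf x):=\mathbb G(\mathbf x)$. That $D_i(\mathbf x)$ is a bounded $i$-linear operator with $\|D_i(\mathbf x)\|\le M$ (respectively $\le\delta$ when $i=1$) follows from~\eqref{small-Lip} and the elementary inequality $\sup_n\|c_n\|\le\|\mathbf c\|$, valid for every $\mathbf c=(c_n)_{n\in\mathbb Z}\in Y$: squaring the fiberwise definition, using $\|d^i_{x_{n-1}}f_{n-1}\|\le M$, keeping $\|h^{(1)}_{n-1}\|^2$ under the sum over $n$ and estimating $\|h^{(j)}_{n-1}\|\le\|\mathbf h^{(j)}\|$ for $j\ge 2$ does it. Moreover $D_1(\boldsymbol 0)=0$ since $d_0f_n=0$.

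The core of the argument is then an induction on $i=1,\dots,N$ showing that $D_{i-1}(\mathbf x)$ is Fréchet differentiable in $\mathbf x$ with derivative $D_i(\mathbf x)$. Fixing $\mathbf x,\mathbf h\in Y$, the operator $D_{i-1}(\mathbf x+\mathbf h)-D_{i-1}(\mathbf x)-D_i(\mathbf x)\mathbf h$ acts on the $n$-th fiber as $d^{i-1}_{x_{n-1}+h_{n-1}}f_{n-1}-d^{i-1}_{x_{n-1}}f_{n-1}-\bigl(d^i_{x_{n-1}}f_{n-1}\bigr)(h_{n-1},\,\cdot\,)$, and the second-order Taylor estimate for the $C^2$ map $x\mapsto d^{i-1}_xf_{n-1}$ --- whose second derivative $d^{i+1}_xf_{n-1}$ is bounded by $M$ precisely because $i+1\le N+1$ --- bounds this fiberwise quantity, in $(i-1)$-linear operator norm, by $\tfrac{M}{2}\|h_{n-1}\|^2$. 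Squaring, summing over $n$, pulling out two factors $\|h_{n-1}\|^2\le\|\mathbf h\|^2$ and treating the $\mathbf h^{(j)}$'s as in the previous paragraph gives
\[
\bigl\|D_{i-1}(\mathbf x+\mathbf h)-D_{i-1}(\mathbf x)-D_i(\mathbf x)\mathbf h\bigr\|\le\tfrac{M}{2}\|\mathbf h\|^2=o(\|\mathbf h\|),
\]
which is differentiability with derivative $D_i(\mathbf x)$; in particular $d^i_{\mathbf x}\mathbb G=D_i(\mathbf x)$, so the norm bounds from the previous paragraph apply. Finally, the same computation with the mean value estimate $\|d^N_{x+h}f_n-d^N_xf_n\|\le M\|h\|$ (available since $\|d^{N+1}_xf_n\|\le M$) gives $\|D_N(\mathbf x+\mathbf h)-D_N(\mathbf x)\|\le M\|\mathbf h\|$, so $D_N$ is (Lipschitz) continuous; hence $\mathbb G$, and therefore $\mathbb F$, is $C^N$.

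The one genuinely delicate point --- the step I would be most careful about --- is the passage from the finite-dimensional, fiberwise Taylor and mean value estimates on the maps $f_n$ to honest Fréchet estimates for $\mathbb G$ on the infinite-dimensional Hilbert space $Y$. This succeeds only because the bounds in~\eqref{small-Lip} are uniform in $n$, $x$ and $i$, and because the inclusion $\ell^2\hookrightarrow\ell^\infty$ lets one decouple the test sequences $\mathbf h^{(1)},\dots,\mathbf h^{(i)}$ by keeping exactly one of them in the $\ell^2$-norm and the remaining ones in the sup-norm; the rest is routine bookkeeping with multilinear operator norms.
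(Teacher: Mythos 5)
Your proposal is correct and takes essentially the same approach as the paper: both compute $d^i_{\mathbf x}\mathbb F$ fiberwise as the diagonal multilinear operator $(\boldsymbol\xi^1,\dots,\boldsymbol\xi^i)\mapsto(d^i_{x_{n-1}}f_{n-1}(\xi^1_{n-1},\dots,\xi^i_{n-1}))_{n\in\mathbb Z}$, establish differentiability by Taylor/mean-value estimates that exploit the uniformity of \eqref{small-Lip} together with the $\ell^2\hookrightarrow\ell^\infty$ bound, and read off \eqref{LIP} directly from the fiberwise formulas. One small difference worth noting: you conclude $C^N$ regularity cleanly by stopping the induction at order $N$ (which requires only the available bound $\|d^{N+1}_xf_n\|\le M$) and then proving separately that $D_N$ is Lipschitz continuous; the paper instead asserts $(N+1)$-fold differentiability, for which the induction step as written would formally ask for a bound on $d^{N+2}f_n$ that is not among the hypotheses, so your way of closing the argument is in fact the tighter one.
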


\begin{proof}[Proof of Lemma {\rm \ref{lemm}}]
We first claim that $\mathbb F$ is differentiable and that
\begin{equation}\label{1st-deri}
d_{\mathbf  x}\mathbb F (\boldsymbol{\xi}) = ( A_{n-1}\xi_{n-1}+ d_{x_{n-1}}f_{n-1} \xi_{n-1})_{n\in\mathbb{Z}},
\end{equation}
for  $\mathbf x=(x_n)_{n\in \mathbb Z},
\boldsymbol{\xi}=(\xi_n)_{n\in \mathbb Z}\in Y$. We define $L\colon Y\to Y$ by
\[
L\boldsymbol{\xi}=( A_{n-1}\xi_{n-1}+ d_{x_{n-1}}f_{n-1} \xi_{n-1})_{n\in\mathbb{Z}}.
\]
It is easy to verify that $L$ is a bounded linear operator on $Y$.  Take $\mathbf y\in Y$.
Note that
\[
\begin{split}
 (\mathbb F(\mathbf x+\mathbf y)-\mathbb F(\mathbf x)-L\mathbf y)_n
&=\int_0^1 (d_{x_{n-1}+ty_{n-1}}f_{n-1}y_{n-1}-d_{x_{n-1}}f_{n-1}y_{n-1})\, dt,
\end{split}
\]
which together with Mean Value Theorem and~\eqref{small-Lip} implies that
\[
\begin{split}
\| (\mathbb F(\mathbf x+\mathbf y)-\mathbb F(\mathbf x)-L\mathbf y)_n\| &\le \int_0^1 \|d_{x_{n-1}+ty_{n-1}}f_{n-1}y_{n-1}-d_{x_{n-1}}f_{n-1}y_{n-1}\|\, dt \\
&\le \left(\sup_{x\in\mathbb R^d} \| d^2_xf_{n-1}\| \right)\|y_{n-1}\|^2
\\
&\le M\|y_{n-1}\|^2,
\end{split}
\]
for $n\in \mathbb Z$. Hence,
\[
\|\mathbb F(\mathbf x+\mathbf y)-\mathbb F(\mathbf x)-L\mathbf y\|\le M\|\mathbf y\|^2.
\]
This readily implies that~\eqref{1st-deri} holds.  In particular, we have that $d_{\mathbf 0} \mathbb F=\mathbb A$.

Next, we further prove that for each $2\le i \le N+1$
\begin{equation}\label{high-deri}
d_{\mathbf  x}^i\mathbb{F}(\boldsymbol{\xi}^1,\ldots, \boldsymbol{\xi}^i) = \left( d^i_{x_{n-1}} f_{n-1} (\xi^1_{n-1},\ldots,\xi^i_{n-1})\right)_{n\in\mathbb{Z}},
\end{equation}
for $\mathbf x=(x_n)_{n\in \mathbb Z}, \boldsymbol{\xi}^j= (\xi^j_n)_{n\in\mathbb{Z}}\in Y$ with $2\le j\le i$. This will be proved by induction. When $i=2$, let  $\mathbb D^2 \colon Y \times Y \to Y$ be a multilinear operator defined by
\begin{equation*}
 \mathbb D^2 ( \boldsymbol{\xi}^1, \boldsymbol{\xi}^2 ):= \left( d^2_{x_{n-1}} f_{n-1}(\xi^1_{n-1},\xi^2_{n-1})\right)_{n\in\mathbb{Z}}
\end{equation*}
for $\boldsymbol{\xi}^1, \boldsymbol{\xi}^2\in Y$.
 Observe that the second inequality of \eqref{small-Lip} (with $i=2$) implies that
$
\| \mathbb D^2 (\boldsymbol{\xi}^1, \boldsymbol{\xi}^2)\| \le M\|\boldsymbol{\xi}^1\| \|\boldsymbol{\xi}^2\|,
$
and therefore $\mathbb D^2$ is bounded. Moreover, by the second inequality of \eqref{small-Lip} (with $i=3$) again and by the Mean Value Theorem, we deduce that
\begin{align}\label{2nd-der-1}
&\|d_{\mathbf  x+\mathbf y}\mathbb F(\boldsymbol{\xi}) - d_{\mathbf x}\mathbb F(\boldsymbol{\xi})- \mathbb D^2(\boldsymbol{\xi}, \mathbf y)\|
\nonumber\\
&=\Big\| \Big ( d_{x_{n-1}+y_{n-1}} f_{n-1}(\xi_{n-1}) - d_{x_{n-1}} f_{n-1}(\xi_{n-1})  - d^2_{x_{n-1}} f_{n-1}(\xi_{n-1}, y_{n-1}) \Big )_{n\in\mathbb Z} \Big \|
\nonumber\\
 &\le \left(\sum_{n\in\mathbb Z} \int_0^1 \| (d^2_{x_{n-1}+ty_{n-1}} f_{n-1}-d^2_{x_{n-1}} f_{n-1})(\xi_{n-1}, y_{n-1})\|^2\, dt\right)^{\frac{1}{2}}
\nonumber\\
 & \le M \| \mathbf  y\|^2 \|\boldsymbol{\xi}\|,
\end{align}
for $\mathbf  y=(y_n)_{n\in \mathbb Z},~ \boldsymbol{\xi}=(\xi_n)_{n\in\mathbb{Z}} \in Y$, which implies that
\begin{align}\label{2nd-der-2}
&\lim_{\bf y\to 0}\frac{\|d_{\bf x+\bf y}\mathbb{F}-d_{\bf x}\mathbb{F}-\mathbb D^2(\cdot, \mathbf y)\|}{\|\bf y\|}
\nonumber\\
&=\lim_{\bf y\to 0}\,\sup_{\boldsymbol{\xi}\in Y\backslash\{0\}}\frac{\|d_{\mathbf  x+\mathbf y}\mathbb F(\boldsymbol{\xi}) - d_{\mathbf x}\mathbb F(\boldsymbol{\xi})- \mathbb D^2(\boldsymbol{\xi}, \mathbf y)\|}{\|\bf y\|\|\boldsymbol{\xi}\|}=0.
\end{align}
This implies that $\mathbb F$ is twice differentiable and that the second-order derivative at ${\bf x}$ is given by $d_{{\bf x}}^2F=\mathbb D^2$. Hence, \eqref{high-deri} holds for $i=2$.
Assume now that~\eqref{high-deri} holds for some $2\le i \le N$, and define a multilinear operator $\mathbb D^{i+1} \colon Y^{i+1} \to Y$ by
\[
\mathbb D^{i+1}(\boldsymbol{\xi}^1, \ldots, \boldsymbol{\xi}^{i+1}):=\left( d_{x_{n-1}}^{i+1}f_{n-1} (\xi_{n-1}^1, \ldots, \xi_{n-1}^{i+1}) \right)_{n\in \mathbb Z},
\]
for $\boldsymbol{\xi}^j=(\xi_n^j)_{n\in \mathbb Z}\in Y$ and $2\le j \le i+1$, which is clearly bounded. Then, similarly to \eqref{2nd-der-1}-\eqref{2nd-der-2}, we can show that \eqref{high-deri} holds for $i+1$.
By induction on $i$ we see that \eqref{high-deri} holds for all $2\le i\le N+1$. This shows that $\mathbb F$ is differentiable $N+1$ times and therefore it is $C^N$.

It remains to establish~\eqref{LIP}.
We infer from \eqref{1st-deri} and the first inequality in~\eqref{small-Lip} that
\begin{align}\label{1st-der}
\|d_{\bf x}(\mathbb F-\mathbb A)\|
&= \sup_{\boldsymbol{\xi}\in Y\backslash\{0\}} \frac{\|d_{\bf x}(\mathbb F-\mathbb A)\boldsymbol{\xi}\|}{\|\boldsymbol{\xi}\|}
= \sup_{\boldsymbol{\xi}\in Y\backslash\{0\}} \frac{\|( d_{x_{n-1}}f_{n-1} (\xi_{n-1}))_{n\in\mathbb{Z}}\|}
{\|\boldsymbol{\xi}\|}
\nonumber\\
&=\sup_{\boldsymbol{\xi}\in Y\backslash\{0\}}  \frac{\left(\sum_{n\in\mathbb Z}\|d_{x_{n-1} }f_{n-1} (\xi_{n-1})\|^2\right)^{\frac{1}{2}} }{\|\boldsymbol{\xi}\|}
\nonumber\\
&\le \sup_{\boldsymbol{\xi}\in Y\backslash\{0\}}  \frac{\left(\sum_{n\in\mathbb Z}\delta^2\|\xi_{n-1}\|^2\right)^{\frac{1}{2}} }{\|\boldsymbol{\xi}\|}
=\sup_{\boldsymbol{\xi}\in Y\backslash\{0\}} \frac{\delta\|\boldsymbol{\xi}\|}
{\|\boldsymbol{\xi}\|}
=\delta,
\end{align}
which proves the first inequality of \eqref{LIP}. Moreover, by \eqref{high-deri} and the second inequality of \eqref{small-Lip}, similar to \eqref{1st-der}, we can prove the second inequality of
\eqref{LIP}.
This completes the proof of Lemma \ref{lemm}.
\end{proof}

We continue with the proof of Proposition \ref{center}.
It follows from~\eqref{tricc}, Lemma~\ref{lemm} and the Center Manifold Theorem on  Banach spaces (see e.g. \cite[Theorem 4.1]{Gallay}) that there exists a $C^N$ map $\Phi \colon Y^c\to Y^s\oplus Y^u$ such that
$\Phi(\boldsymbol{0})=\boldsymbol{0}$, $d_{\boldsymbol{0}}\Phi=0$, $\|d_{\bf x}^i\Phi \|\le \tilde M$ near $\bf x=0$ for $i=2,\ldots,N$ with a constant $\tilde M>0$ and $\mathbb F(\Gamma)=\Gamma$, where
\[
\Gamma:=\{{\mathbf x}_c+\Phi({\mathbf x}_c): \ {\mathbf x}_c \in Y^c \}.
\]
In addition, $\Gamma$ is also characterized as
\begin{equation}\label{cm3}
\Gamma=\bigg \{{\bf x}\in Y: \sup_{n\ge 0}\{\gamma_2^{-n}\|\mathbb F^n({\bf x})\|\}<\infty,\quad \sup_{n\ge 0}\{\gamma_1^{n}\| \mathbb F^{-n}({\bf x})\|\}<\infty \bigg \}.
\end{equation}
Next, for any given $n\in \mathbb Z$ and $x_c\in \Ima \Pi^c_n$, define ${\bf v}=(v_m)_{m\in \mathbb Z} \in Y^c$ by
\begin{align}\label{def-vn}
v_n:=x_c \quad {\rm and} \quad v_m:=0 \quad {\rm for}~ m\neq n,
\end{align}
and define $\phi_n:\Ima \Pi^c_n\to \Ima \Pi^s_n \oplus \Ima \Pi^u_n$ by
\begin{align}\label{phi-phi}
\phi_n(x_c):=(\Phi({\bf v}) )_n.
\end{align}
Similarly to \eqref{F} and \eqref{high-deri}, we see that each $\phi_n$, $n\in\mathbb Z$, is $C^N$ such that
\[
d^i_{x_c}\phi_n(\zeta^1, \ldots, \zeta^i)=(d^i_{\bf v} \Phi({ \boldsymbol \zeta}^1, \ldots, { \boldsymbol \zeta}^i))_{n},\quad \forall 1\le i \le N,
\]
for $\zeta^1, \ldots, \zeta^i \in \Ima \Pi^c_n$, where ${ \boldsymbol \zeta}^i=(\zeta_m^i)_{m\in \mathbb Z}\in Y^c$ is defined by $\zeta_n^i:=\zeta^i$ and $\zeta_m^i=0$ for $m\neq n$. Thus, $\phi_n(0)=0$, $d_0\phi_n=0$ and $\|d_{x_c}^i\phi_n\|\le \tilde M$ near $x_c=0$ for $i=2,\ldots ,N$.

For the invariance property,
we first show that $F_n(\Gamma_n) \subset\Gamma_{n+1}$ for each $n\in\mathbb Z$. Fix $n\in\mathbb Z$ and choose any $x_c+\phi_n(x_c)\in \Gamma_n$. According to the definitions of $\phi_n$ and $\mathbb F$,
\begin{align}\label{Fv}
F_n (x_c+\phi_n(x_c))= F_n(({\bf v}+\Phi ({\bf v}))_n) = (\mathbb F({\bf v}+\Phi ({\bf v})))_{n+1},
\end{align}
where ${\bf v}\in Y^c$ is defined by~\eqref{def-vn}. Since ${\bf v}+\Phi ({\bf v}) \in \Gamma$ and $\mathbb F(\Gamma)=\Gamma$,
 there exists  $\tilde {\bf v}\in Y^c$ such that $\mathbb F({\bf v}+\Phi ({\bf v})) = \tilde {\bf v} +\Phi(\tilde {\bf v})\in \Gamma$. Set $\tilde x_c:= (\tilde{\bf v})_{n+1}\in \Ima \Pi^c_{n+1}$. We claim that
\begin{align}\label{vvxx}
(\tilde {\bf v} +\Phi(\tilde {\bf v}))_{n+1}= \tilde x_c+\phi_{n+1}(\tilde x_c).
\end{align}
Indeed, let $\hat{\bf v}=(\hat v_m)_{m\in\mathbb{Z}}\in Y^c$ be defined by
$$
\hat v_{n+1}:=\tilde x_c \quad {\rm and} \quad \hat v_m:=0 \quad {\rm for}~ m\neq n+1,
$$
which means that $(\tilde {\bf v})_{n+1}=(\hat {\bf v})_{n+1}$.
Due to the definition of $\phi_{n+1}$, it suffices to prove that
\begin{align}\label{vvvv}
\Phi(\tilde {\bf v})=\Phi(\hat {\bf v}).
\end{align}
For this purpose, we need the following result.
\begin{lm}\label{pp}
Let ${\bf z}^1=(z_n^1)_{n\in \mathbb Z}$ and ${\bf z}^2=(z_n^2)_{n\in \mathbb Z}$ belong to $Y^c$ and satisfy $z_k^1=z_k^2$ for some $k\in \mathbb Z$. Then, $(\Phi({\bf z}^1))_k=(\Phi({\bf z}^2))_k$.
\end{lm}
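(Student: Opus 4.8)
The plan is to use the ``shift‑and‑compose'' structure of the operator $\mathbb F$ from \eqref{F} in order to reduce Lemma~\ref{pp} to a uniqueness statement for nonautonomous orbits with prescribed growth rates. The basic observation is that for any $\mathbf x=(x_n)_{n\in\mathbb Z}\in Y$ the \emph{diagonal} sequence $q_m:=(\mathbb F^{m-k}(\mathbf x))_m$, $m\in\mathbb Z$, is exactly the full orbit of the nonautonomous system $(F_n)_{n\in\mathbb Z}$ passing through $x_k$ at time $k$; that is, $q_k=x_k$ and $q_{m+1}=F_m(q_m)$ for every $m$. Indeed, by \eqref{F} one has $q_{m+1}=(\mathbb F(\mathbb F^{m-k}(\mathbf x)))_{m+1}=F_m((\mathbb F^{m-k}(\mathbf x))_m)=F_m(q_m)$, and clearly $\|q_m\|\le\|\mathbb F^{m-k}(\mathbf x)\|$ for all $m$.

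I would apply this to $\mathbf x^i:=\mathbf z^i+\Phi(\mathbf z^i)$, $i=1,2$. Then $\mathbf x^i\in\Gamma$, $\pi_c x^i_k=z^i_k$ and $(\pi_s+\pi_u)x^i_k=(\Phi(\mathbf z^i))_k$, so it suffices to prove $x^1_k=x^2_k$. Let $q^i=(q^i_m)_{m\in\mathbb Z}$ be the diagonal orbit through $x^i_k$. Combining $\|q^i_m\|\le\|\mathbb F^{m-k}(\mathbf x^i)\|$ with the characterization \eqref{cm3} of $\Gamma$ (for the exponents $\gamma_1\in(\mu_+,\nu_-^N)$, $\gamma_2\in(\nu_+^N,\varrho_-)$ used there) produces a constant $C>0$ with $\|q^i_m\|\le C\gamma_2^{m-k}$ for $m\ge k$ and $\|q^i_m\|\le C\gamma_1^{m-k}$ for $m\le k$, $i=1,2$. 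Thus $x^1_k$ and $x^2_k$ each carry a full $(F_n)$‑orbit (based at time $k$) with $\gamma_2$‑subexponential forward growth and $\gamma_1$‑subexponential backward growth, and they have the same $\pi_c$‑component.

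It remains to prove the uniqueness statement, which I expect to be the main obstacle: \emph{if $p^1,p^2\in\mathbb R^d$ each carry such a controlled full orbit and $\pi_c p^1=\pi_c p^2$, then $p^1=p^2$.} Setting $r_m:=q^1_m-q^2_m$, the Mean Value Theorem and \eqref{small-Lip} give $r_{m+1}=A_m r_m+g_m$ with $\|g_m\|\le\delta\|r_m\|$, while $\pi_c r_k=0$ and $\|r_m\|\le 2C\gamma_2^{m-k}$ for $m\ge k$, $\|r_m\|\le 2C\gamma_1^{m-k}$ for $m\le k$. The argument then rests on the chain of gaps
\[
b_\ell\le\mu_+<\gamma_1<\nu_-^N\le\nu_-\le 1\le\nu_+\le\nu_+^N<\gamma_2<\varrho_-\le a_{\ell+1}
\]
among the block rates of $\mathcal A$ in \eqref{new-ET}, all of which follow from \eqref{SA} and \eqref{gap-cond} (recall $\mu_+<\lambda_-^N$, $\lambda_+^N<\varrho_-$ and $\lambda_-\le\nu_-\le 1\le\nu_+\le\lambda_+$, cf.\ \eqref{tricc}). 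A standard Lyapunov--Perron computation now applies: telescoping the variation‑of‑constants identity $r_m=\mathcal A(m,n)r_n+\sum_{j=n}^{m-1}\mathcal A(m,j+1)g_j$ and letting $n\to+\infty$ on the $\pi_u$‑component (resp.\ $n\to-\infty$ on the $\pi_s$‑component) expresses $\pi_u r_m$ in terms of the future values $(g_j)_{j\ge m}$ and $\pi_s r_m$ in terms of the past values $(g_j)_{j<m}$, the relevant geometric series converging because $\gamma_2<a_{\ell+1}$ and $b_\ell<\gamma_1$. Inserting the a priori bounds on $\|r_m\|$ into these expressions (and using $b_\ell,\nu_+<\gamma_2$ and $\gamma_1<\nu_-$ to control the remaining blocks) first yields
\[
\sup_{m\ge k}\gamma_2^{-(m-k)}\|r_m\|+\sup_{m\le k}\gamma_1^{-(m-k)}\|r_m\|\;\le\; c\|r_k\|+c\delta\Big(\sup_{m\ge k}\gamma_2^{-(m-k)}\|r_m\|+\sup_{m\le k}\gamma_1^{-(m-k)}\|r_m\|\Big),
\]
so for $\delta$ small the left‑hand side is $\le 2c\|r_k\|$; feeding this back into the expressions for $\pi_u r_k$, $\pi_s r_k$ and using $\pi_c r_k=0$ gives $\|r_k\|\le c\delta\|r_k\|$, hence $r_k=0$. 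Thus $x^1_k=q^1_k=q^2_k=x^2_k$, and since $(\Phi(\mathbf z^i))_k=(\pi_s+\pi_u)x^i_k$ we conclude $(\Phi(\mathbf z^1))_k=(\Phi(\mathbf z^2))_k$. The only genuinely delicate point is the bookkeeping of constants in this last step, so that the coefficients of $\|r_k\|$ and of the two suprema on the right stay strictly below $1$; this is exactly what smallness of $\delta$ (assumed throughout, see the line after \eqref{small-Lip}) secures.
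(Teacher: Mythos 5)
Your proof is correct, but it takes a genuinely different route from the paper.

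The paper's argument is a short contradiction/transfer argument that stays entirely at the level of the lifted Hilbert space $Y$: it defines $\mathbf y$ by replacing $(\Phi(\mathbf z^1))_k$ with $(\Phi(\mathbf z^2))_k$ in $\Phi(\mathbf z^1)$, computes $(\mathbb F^{\pm m}(\mathbf z^1+\mathbf y))_n$ explicitly, observes that every coordinate is a forward/backward $(F_n)$-iterate of a point of $\Gamma_n$ (either the one coming from $\mathbf z^1+\Phi(\mathbf z^1)$ or, in the single coordinate $n=m+k$, from $\mathbf z^2+\Phi(\mathbf z^2)$; this is where $z^1_k=z^2_k$ is used), and concludes via the characterization \eqref{cm3} that $\mathbf z^1+\mathbf y\in\Gamma$. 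Since $\mathbf y\in Y^s\oplus Y^u$, the graph property of $\Gamma$ forces $\mathbf y=\Phi(\mathbf z^1)$, a contradiction. No new estimate is ever needed: the decoupled, product structure of $Y$ together with the uniqueness of $\Phi$ does all the work.

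Your argument, by contrast, descends to the underlying finite-dimensional nonautonomous system $(F_n)$ and proves a genuine dynamical uniqueness statement: any two full $(F_n)$-orbits with $\gamma_2$-tempered forward growth and $\gamma_1$-tempered backward growth and with the same $\pi_c$-component at time $k$ must agree at time $k$. You establish this by a Lyapunov--Perron computation on $r_m=q^1_m-q^2_m$, where the relevant geometric series converge exactly because of the gaps $b_\ell<\gamma_1<\nu_-$ and $\nu_+<\gamma_2<a_{\ell+1}$ guaranteed by \eqref{SA} and \eqref{gap-cond}. The reduction step (the diagonal sequence $q_m=(\mathbb F^{m-k}(\mathbf x))_m$ being an $(F_n)$-orbit inheriting the growth bounds from \eqref{cm3}) and the final contraction estimate are both sound, and smallness of $\delta$ indeed closes the argument. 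What you gain is a stronger, purely nonautonomous statement (tempered orbit uniqueness given the $\pi_c$-slice) that does not rely on the graph structure of $\Gamma$; what the paper's proof buys is brevity and avoidance of any fresh iteration estimate, by leaning entirely on the already-established properties of $\Phi$ on $Y$.
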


\begin{proof}[Proof of Lemma {\rm \ref{pp}}]
Assume that $\Phi({\bf z}^1)_k\neq \Phi({\bf z}^2)_k$. Let
${\bf y} =(y_n)_{n\in\mathbb Z}\in Y$ be defined by
\begin{equation}\label{Y}
y_n=\begin{cases}
(\Phi({\bf z}^1))_n & \text{if} ~~n\ne k,
\\
(\Phi({\bf z}^2))_k & \text{if} ~~n= k.
\end{cases}
\end{equation}
Then, we see from \eqref{F} that
\begin{equation*}
(\mathbb F^m ({\bf z}^1+{\bf y}) )_n =
\begin{cases}
F_{n-1}\circ \ldots \circ F_{n-m}(z^1_{n-m}+ (\Phi({\bf z}^1))_{n-m} ) & \text{if}~~ n\neq m+k,
\\
F_{n-1}\circ \ldots \circ F_{n-m}(z^2_{n-m}+ (\Phi({\bf z}^2))_{n-m} ) & \text{if}~~ n=m+k,
\end{cases}
\end{equation*}
and that
\begin{equation*}
(\mathbb F^{-m} ({\bf z}^1+{\bf y}) )_n =
\begin{cases}
F_{n}^{-1}\circ \ldots \circ F_{n+m-1}^{-1}(z^1_{n+m}+ (\Phi({\bf z}^1))_{n+m} ) & \text{if}~~ n+m\ne k,
\\
F_{n}^{-1}\circ \ldots \circ F_{n+m-1}^{-1}(z^2_{n+m}+ (\Phi({\bf z}^2))_{n+m} ) & \text{if}~~ n+m=k,
\end{cases}
\end{equation*}
for every $n\in \mathbb Z$ and $m\in\mathbb{N}$. This together with \eqref{cm3} easily implies that
\[
\sup_{m\ge 0}\{\gamma_2^{-m}\|\mathbb F^m ({\bf z}^1+{\bf y})\|\}<\infty,\quad\sup_{m\ge 0}\{\gamma_1^m\|\mathbb F^{-m} ({\bf z}^1+{\bf y})\|\}<+\infty.
\]
By~\eqref{cm3} we have that ${\bf z}^1+{\bf y} \in \Gamma$, and thus
 ${\bf y}=\Phi({\bf z}^1)$.  This contradicts~\eqref{Y} and the proof of lemma \ref{pp} is completed.
\end{proof}

By Lemma \ref{pp}, we have that ~\eqref{vvvv} holds,  which implies~\eqref{vvxx}. Then, combining \eqref{Fv} and~\eqref{vvxx}, we obtain that
$$
F_n (x_c+\phi_n(x_c))= (\mathbb F({\bf v}+\Phi ({\bf v})))_{n+1}
=(\tilde {\bf v} +\Phi(\tilde {\bf v}))_{n+1}=
\tilde x_c+\phi_{n+1}(\tilde x_c)\in \Gamma_{n+1},
$$
and therefore $F_n(\Gamma_n)\subset \Gamma_{n+1}$. Conversely, similar arguments imply that $\Gamma_{n+1} \subset F_n(\Gamma_n)$ and therefore $\Gamma_{n+1} =F_n(\Gamma_n)$.

It remains to observe that~\eqref{cm1} and~\eqref{cm2} follow readily from~\eqref{cm3}. The proof of proposition \ref{center} is completed.
\end{proof}

\section{Nonautonomous version of Takens theorem}
\setcounter{equation}{0}

\noindent In this section, we prove our main theorem, as described in the introduction.


\subsection{Nonautonomous Belitskii-Samavol Theorem}
We consider two sequences $f_n\colon \mathbb R^d\to \mathbb R^d$ and $g_n\colon \mathbb R^d \to \mathbb R^d$, $n\in \mathbb Z$ of $C^{N+1}$
maps such that

\begin{itemize}
\item for $n\in \mathbb Z$,
\begin{equation}\label{zeroo}
f_n(0)=g_n(0)=0 \quad \text{and} \quad d_0f_n=d_0g_n=0;
\end{equation}


\item for $2\le i\le N+1$ and $x\in \mathbb R^d$
\begin{equation}\label{deriv}
\|d_xf_n\|\le \delta,\quad\|d^i_xf_n\| \le M,
\quad\|d_xg_n\|\le \delta,\quad\|d^i_xg_n\| \le M,
\end{equation}
where $\delta, M>0$ are constants with $\delta$ being sufficiently small.
\end{itemize}
By Proposition~\ref{center}, there are sequences of nonautonomous center manifolds $\Gamma_n$ and $\Gamma'_n$, $n\in \mathbb Z$ that correspond to sequences $F_n:=A_n+f_n$ and $G_n:=A_n+g_n$ for $n\in \mathbb Z$, respectively. Similarly to~\eqref{F}, we define a map $\mathbb G\colon Y \to Y$ by
\begin{equation}\label{G}
(\mathbb G (\mathbf x))_n:=G_{n-1}(x_{n-1}):=A_{n-1}x_{n-1}+g_{n-1}(x_{n-1}),
\quad\forall n\in \mathbb Z,
\end{equation}
for $\mathbf x=(x_n)_{n\in \mathbb Z}\in Y$. Let $\Gamma$ and $\Gamma'$ be center manifolds of $\mathbb F$ and $\mathbb G$, respectively. Then, we have the following lemma.

\begin{lm}\label{der0}
Let $(F_n)_{n\in\mathbb{Z}}$ and $(G_n)_{n\in\mathbb{Z}}$ be given above and suppose that $\Gamma_n=\Gamma_n'$ for $n\in \mathbb Z$. In addition, assume that $d^i_{x_c} (F_n-G_n)=0$ for $1\le i \le N$ and $x_c\in \Gamma_n$, $n\in \mathbb Z$. Then, $\Gamma=\Gamma'$ and $d^i_{{\bf x}_c}(\mathbb F-\mathbb G)={\bf 0}$ for $1\le i \le N$ and ${\bf x}_c\in \Gamma$.
\end{lm}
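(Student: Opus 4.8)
The statement has two assertions: that the center manifolds $\Gamma$ and $\Gamma'$ of the lifted maps $\mathbb F$ and $\mathbb G$ coincide, and that their derivatives up to order $N$ agree on $\Gamma$. The plan is to reduce everything to the fiberwise data via the characterization~\eqref{cm3} of $\Gamma$ (and its analogue for $\Gamma'$) together with the explicit relation~\eqref{phi-phi} between the lifted graph map $\Phi$ and the fiber maps $\phi_n$.

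First I would prove $\Gamma=\Gamma'$. Since $\Gamma_n=\Gamma_n'$ for every $n$, the fiber graph maps satisfy $\phi_n=\phi_n'$ for all $n\in\mathbb Z$. Now take any $\mathbf x=(x_n)_{n\in\mathbb Z}\in\Gamma$; by definition of $\Gamma$ as the graph of $\Phi$ over $Y^c$, writing $\mathbf x_c:=\Pi_Y^c\mathbf x$ we have $x_n=(\mathbf x_c)_n+(\Phi(\mathbf x_c))_n=(\mathbf x_c)_n+\phi_n((\mathbf x_c)_n)$ for each $n$, where the last equality uses Lemma~\ref{pp} to replace $(\Phi(\mathbf x_c))_n$ by $(\Phi(\mathbf v))_n=\phi_n((\mathbf x_c)_n)$ with $\mathbf v$ the sequence supported at $n$ with value $(\mathbf x_c)_n$. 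Since $\phi_n=\phi_n'$, the same formula exhibits $\mathbf x$ as lying on the graph of $\Phi'$, i.e. $\mathbf x\in\Gamma'$. By symmetry $\Gamma=\Gamma'$. (Alternatively one may argue directly from~\eqref{cm3}: membership there depends only on the forward/backward orbits of the components, and these are determined fiberwise by the $\phi_n$; this avoids even invoking Lemma~\ref{pp}.)

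Next I would handle the derivatives. Fix $\mathbf x_c\in Y^c$ and $1\le i\le N$. From the relation $d^i_{\mathbf v}\Phi(\boldsymbol\zeta^1,\dots,\boldsymbol\zeta^i)=(d^i_{\mathbf v}\Phi(\dots))_n$ established in the proof of Proposition~\ref{center} (the displayed identity just after~\eqref{phi-phi}), the $n$-th component of $d^i_{\mathbf x_c}\Phi$ applied to test sequences is, after using Lemma~\ref{pp} to localize, exactly $d^i_{(\mathbf x_c)_n}\phi_n$ acting on the $n$-th components; and the analogous statement holds for $\Phi'$ and $\phi_n'=\phi_n$. Now the point on the center manifold $\mathbf x_c+\Phi(\mathbf x_c)\in\Gamma$ has $n$-th component $(\mathbf x_c)_n+\phi_n((\mathbf x_c)_n)\in\Gamma_n$, so the hypothesis $d^j_{x_c}(F_n-G_n)=0$ for $1\le j\le N$ and $x_c\in\Gamma_n$ applies at that point. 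Using the chain rule together with the explicit component formulas~\eqref{1st-deri} and~\eqref{high-deri} for the derivatives of $\mathbb F$ (and the identical ones for $\mathbb G$), each component of $d^i_{\mathbf x_c}(\mathbb F-\mathbb G)$ evaluated along $\Gamma$ is a universal polynomial expression in the derivatives $d^j_{\,\cdot\,}(F_n-G_n)$ and $d^j_{\,\cdot\,}(f_n-g_n)$ at the point $(\mathbf x_c)_n+\phi_n((\mathbf x_c)_n)\in\Gamma_n$ for $1\le j\le i$, all of which vanish by hypothesis. Hence $d^i_{\mathbf x_c}(\mathbb F-\mathbb G)=\mathbf 0$ for $1\le i\le N$, which is the claim since $\mathbf x_c+\Phi(\mathbf x_c)$ ranges over all of $\Gamma$.

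The main obstacle I expect is bookkeeping rather than conceptual: one must be careful that the "localization" step (replacing $\Phi$ evaluated on a general sequence by the $\phi_n$ evaluated on single components) is valid at the level of $i$-th derivatives and not just values, and that the chain-rule expansion of $d^i(\mathbb F\circ(\mathrm{graph}))$ genuinely involves only derivatives of $F_n$ at points of $\Gamma_n$ of order $\le i\le N$ — this is where the center manifold being exactly invariant ($F_n(\Gamma_n)=\Gamma_{n+1}$) and the hypothesis being an order-$N$ jet condition along $\Gamma_n$ are used in tandem. Both of these are already implicitly handled by the machinery of Proposition~\ref{center} and Lemma~\ref{pp}, so the proof is essentially a careful transcription of fiberwise jet equality into the $Y$-language.
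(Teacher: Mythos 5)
Your proof of the first assertion ($\Gamma=\Gamma'$) is essentially identical to the paper's: both use Lemma~\ref{pp} to localize $\Phi$ and $\Phi'$ to the fiber maps $\phi_n=\phi'_n$. For the second assertion, your conclusion is correct but the route is unnecessarily roundabout, and the extra machinery you invoke suggests a slight misreading of what the lemma asserts. The claim is that $d^i_{\mathbf x_c}(\mathbb F-\mathbb G)=\mathbf 0$ for a \emph{point} $\mathbf x_c\in\Gamma$, where the derivative is taken of the difference map $\mathbb F-\mathbb G$ on all of $Y$; it is not the derivative of a composition with the graph map, so no chain rule and no ``universal polynomial expression in lower-order derivatives'' enter. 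The paper's proof is a one-liner: by~\eqref{1st-deri} and~\eqref{high-deri}, $d^i_{\mathbf x_c}(\mathbb F-\mathbb G)(\boldsymbol\xi^1,\dots,\boldsymbol\xi^i)=\bigl(d^i_{x^c_{n-1}}(f_{n-1}-g_{n-1})(\xi^1_{n-1},\dots,\xi^i_{n-1})\bigr)_{n\in\mathbb Z}$, and since $f_n-g_n=F_n-G_n$ and the components $x^c_n$ of any $\mathbf x_c\in\Gamma$ lie in $\Gamma_n$ (by the localization already carried out in the first part), every component vanishes by the hypothesis $d^i_{x_c}(F_n-G_n)=0$ on $\Gamma_n$. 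Your discussion of localizing the \emph{derivatives} of $\Phi$, and the worry in your final paragraph about whether that localization is valid at the level of $i$-th derivatives, are beside the point: only the \emph{value} $(\Phi(\mathbf x_c))_n=\phi_n((\mathbf x_c)_n)$ is needed to identify the components of the point on $\Gamma$, and the derivatives of $\Phi$ never appear in the computation of $d^i_{\mathbf x_c}(\mathbb F-\mathbb G)$.
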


\begin{proof}
We recall that
\begin{align*}
&\Gamma_n=\{x_c+\phi_n(x_c): \ x_c\in \Ima \Pi_n^c \}, \qquad \Gamma'_n=\{x_c+\phi'_n(x_c): \ x_c\in \Ima \Pi_n^c \}
\end{align*}
and
\begin{align*}
&\Gamma=\{{\bf x}_c+\Phi({\bf x}_c): \ {\bf x}_c\in Y^c \}, \qquad \Gamma'=\{{\bf x}_c+\Phi'({\bf x}_c): \ {\bf x}_c\in Y^c \},
\end{align*}
with $C^N$ maps $\Phi, \Phi' \colon Y^c\to Y^s\oplus Y^u$.
Choose  an arbitrary $\mathbf x_c=(x^c_n)_{n\in \mathbb Z}\in Y^c$.
For any $n\in\mathbb{Z}$, we denote $x^c_n=:x_c\in \Ima \Pi_n^c$. Then,  we have (see~\eqref{phi-phi}) that
$$
\phi_n(x_c)=(\Phi({\bf v}) )_n,\qquad \phi'_n(x_c)=(\Phi'({\bf v}) )_n,
$$
 where ${\bf v}\in Y^c$ is given by \eqref{def-vn}. Then, Lemma \ref{pp} tells us that
$$
(\Phi({\bf x}_c) )_n=(\Phi({\bf v}) )_n=\phi_n(x_c)=\phi'_n(x_c)=(\Phi'({\bf v}) )_n=(\Phi'({\bf x}_c) )_n,
$$
where the equality $\phi_n(x_c)=\phi'_n(x_c)$ follows from
$\Gamma_n=\Gamma_n'$. Therefore,  $\Gamma=\Gamma'$.

Next, we establish the second assertion of the lemma. By~\eqref{high-deri}, we have that
\[
\begin{split}
&d_{{\bf x}_c}^i\mathbb{F}(\boldsymbol{\xi}^1,\ldots, \boldsymbol{\xi}^i)-d_{{\bf x}_c}^i\mathbb{G}(\boldsymbol{\xi}^1,\ldots, \boldsymbol{\xi}^i) \\
& = \Big(d^i_{x^c_{n-1}} f_{n-1} (\xi^1_{n-1},\ldots,\xi^i_{n-1})-d^i_{x^c_{n-1}} g_{n-1} (\xi^1_{n-1},\ldots,\xi^i_{n-1})\Big)_{n\in\mathbb{Z}} \\
&=\Big(d_{x^c_{n-1}}^i (F_{n-1}-G_{n-1})(\xi^1_{n-1},\ldots,\xi^i_{n-1})\Big)_{n\in \mathbb Z}
=(0)_{n\in \mathbb Z}=\boldsymbol 0,
\end{split}
\]
for $1\le i \le N$, $\boldsymbol{\xi}^1, \ldots, \boldsymbol{\xi}^i\in Y^c$ and ${\bf x}_c\in \Gamma=\Gamma'$. The proof of the lemma is completed.
\end{proof}

We have the following theorem, which can be regarded as the nonautonomous version of Belitskii-Samavol Theorem.

\begin{thm}\label{BS-thm-nonauto}
Take an arbitrary $k\in \mathbb N$. Then, there exists $N_0\in \mathbb N$ with the property that for each $N\ge N_0$ such that~\eqref{gap-cond} holds and for sequences $(f_n)_{n\in \mathbb Z}$, $(g_n)_{n\in \mathbb Z}$  of $C^{N+1}$ maps on $\mathbb R^d$ satisfying \eqref{zeroo}, \eqref{deriv} and
\begin{itemize}
\item $\Gamma_n=\Gamma_n'$ and $d_{x_c}^i(f_n-g_n)=0$ for all $1\le i \le N$, $x_c\in \Gamma_n$ and $n\in \mathbb Z$,
\end{itemize}
  there exists a sequence $(H_n)_{n\in \mathbb Z}$ of $C^k$ local
 diffeomorphisms $H_n
 $ near $0$ such that the following holds:
\begin{itemize}
\item for $n\in \mathbb Z$,
\begin{equation}\label{Deriv}
H_n(0)=0;
\end{equation}
\item  there exists a constant $\rho>0$ such that for $n\in \mathbb Z$,
\begin{equation}\label{hgf}
H_{n+1}\circ F_n (x)=G_n\circ H_n(x)\qquad \text{for} \quad \|x\|\le \rho,
\end{equation}
where $F_n=A_n+f_n$ and $G_n=A_n+g_n$;
\item there exist $M, \tilde \rho>0$ such that
\begin{equation}\label{1330}
\|d_{x}^iH_n \| \le M \quad \text{and} \quad \|d_{x}^iH_n^{-1}\| \le M,
\end{equation}
for each $n\in\mathbb Z$, each $1\le i\le k$, and $x\in \mathbb R^d$
with $\|x\| \le \tilde \rho$.
\end{itemize}

\end{thm}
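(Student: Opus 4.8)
The plan is to reduce the nonautonomous Belitskii--Samavol statement to its autonomous counterpart on the Hilbert space $Y$ via the ``lifting method.'' First I would invoke Lemma~\ref{der0}: the hypotheses $\Gamma_n = \Gamma_n'$ and $d_{x_c}^i(f_n - g_n) = 0$ for $1 \le i \le N$, $x_c \in \Gamma_n$, $n \in \mathbb Z$, translate exactly into $\Gamma = \Gamma'$ and $d_{\mathbf x_c}^i(\mathbb F - \mathbb G) = \mathbf 0$ for $1 \le i \le N$, $\mathbf x_c \in \Gamma$, for the lifted maps $\mathbb F, \mathbb G \colon Y \to Y$ defined by \eqref{F} and \eqref{G}. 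By Lemma~\ref{lemm} (applied to both $\mathbb F$ and $\mathbb G$), these lifts are $C^N$, fix $\boldsymbol 0$, have derivative $\mathbb A$ there, and satisfy the global bounds \eqref{LIP}. The spectral properties of $\mathbb A$ recorded in \eqref{smA} (together with the gap condition \eqref{gap-cond}, which via \eqref{tricc} gives $\mu_+ < \lambda_-^N$ and $\lambda_+^N < \varrho_-$) put us precisely in the hypothesis setting of the autonomous Belitskii--Samavol theorem on Hilbert spaces, which is Theorem~\ref{ck-conjuga} from the Appendix.

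Next I would apply Theorem~\ref{ck-conjuga} to $\mathbb F$ and $\mathbb G$ on $Y$: for the given $k$ there is $N_0$ so that whenever $N \ge N_0$ we obtain a $C^k$ local diffeomorphism $\mathbb H \colon Y \to Y$ near $\boldsymbol 0$ with $\mathbb H(\boldsymbol 0) = \boldsymbol 0$, conjugating $\mathbb F$ to $\mathbb G$ on a ball $\|\mathbf x\| \le \rho$, i.e. $\mathbb H \circ \mathbb F = \mathbb G \circ \mathbb H$ there, and with uniformly bounded derivatives $\|d_{\mathbf x}^i \mathbb H\|, \|d_{\mathbf x}^i \mathbb H^{-1}\| \le M$ for $1 \le i \le k$ on a ball. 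The remaining task is to \emph{disintegrate} $\mathbb H$ into a sequence $(H_n)_{n \in \mathbb Z}$ of local diffeomorphisms on $\mathbb R^d$. For $x \in \mathbb R^d$ with $\|x\|$ small and a fixed $n \in \mathbb Z$, set $H_n(x) := (\mathbb H(\mathbf e_n^x))_n$, where $\mathbf e_n^x \in Y$ is the sequence with $x$ in slot $n$ and $0$ elsewhere (this lies in $Y$ since it has finitely many nonzero entries). One then checks, exactly as in the passage \eqref{def-vn}--\eqref{phi-phi} and Lemma~\ref{pp}, that $\mathbb H$ respects the ``single-slot'' structure well enough that $H_n$ is a $C^k$ local diffeomorphism of $\mathbb R^d$, that $H_n(0) = 0$, and that the conjugacy identity $\mathbb H \circ \mathbb F = \mathbb G \circ \mathbb H$ evaluated at $\mathbf e_n^x$ and read off in coordinate $n+1$ yields $H_{n+1} \circ F_n(x) = G_n \circ H_n(x)$ for $\|x\| \le \rho$, giving \eqref{Deriv} and \eqref{hgf}. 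The derivative bounds \eqref{1330} follow because differentiating $x \mapsto (\mathbb H(\mathbf e_n^x))_n$ only involves the restriction of $d^i \mathbb H$ to the finite-dimensional subspace of single-slot sequences, on which the operator norm is dominated by $\|d_{\mathbf x}^i \mathbb H\|$.

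The main obstacle I anticipate is precisely this last disintegration step, specifically the claim that $\mathbb H$ is ``slot-compatible'' so that $H_n$ is well defined and is genuinely invertible (not merely a map). The autonomous conjugacy $\mathbb H$ produced by Theorem~\ref{ck-conjuga} need not a priori commute with the shift or preserve single-slot sequences; what saves the argument is that $\mathbb F$, $\mathbb G$, the center manifold $\Gamma = \Gamma'$, and the spectral splitting $Y = Y^s \oplus Y^c \oplus Y^u$ are all built coordinatewise from the fibers, and the $C^k$ conjugacy is obtained by a homotopy/path construction that inherits this coordinatewise structure (as in Lemma~\ref{pp} for $\Phi$). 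One must verify that $\mathbb H(\mathbf e_n^x)$ depends only on $x$ and $n$ and, in fact, is itself supported near slot $n$ in the relevant sense, so that $H_n^{-1}(y) = (\mathbb H^{-1}(\mathbf e_n^y))_n$ is a genuine two-sided local inverse. Establishing this ``localization'' property of $\mathbb H$ — i.e. that the autonomous Belitskii--Samavol conjugacy can be taken to commute with the fiber structure — is the heart of the proof; once it is in place, properties \eqref{Deriv}, \eqref{hgf} and \eqref{1330} are immediate transcriptions of the corresponding properties of $\mathbb H$.
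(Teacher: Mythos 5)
Your proposal is essentially the paper's proof: lift to the Hilbert space $Y$ using Lemmas~\ref{lemm} and~\ref{der0}, apply the autonomous Belitskii--Samavol theorem (Theorem~\ref{ck-conjuga}) to obtain a local $C^k$ conjugacy $\mathbb H$ between $\mathbb F$ and $\mathbb G$ with $\mathbb H(\boldsymbol 0)=\boldsymbol 0$, then disintegrate by setting $H_n(x):=(\mathbb H(\mathbf w))_n$ with $\mathbf w$ the sequence having $x$ in slot $n$ and $0$ elsewhere, and read off~\eqref{Deriv}, \eqref{hgf}, \eqref{1330} from the corresponding properties of $\mathbb H$. You are correct --- and in fact more careful than the paper --- about the one genuinely delicate point: the paper simply asserts that $H_n$ is a local diffeomorphism with $H_n^{-1}(x)=(\mathbb H^{-1}(\mathbf w))_n$, which tacitly presumes that $\mathbb H$ preserves single-slot sequences. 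This localization can be verified by tracing the construction in the Appendix: the coordinate change~\eqref{new-axes} and the splitting of Lemma~\ref{lm-decomp} act coordinatewise (the center-(un)stable manifolds of $\mathbb F$ and $\mathbb G$ are fiberwise by the argument of Lemma~\ref{pp}); the cut-off $\psi(\|\mathbf x\|)$ is a scalar multiplying coordinatewise maps that vanish at $\boldsymbol 0$, so single-slot inputs stay single-slot; hence $\hat G_\tau$ maps the set of sequences supported in slot $k$ diffeomorphically onto the set supported in slot $k+1$, and the homotopy vector field~\eqref{solution-homo-equa} and its time-one flow inherit the single-slot structure --- but none of this is spelled out. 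Alternatively, one can sidestep the formula for $H_n^{-1}$ entirely: from $\|h(\mathbf x,\tau)\|\le\hat L\|\mathbf x\|^2$ one gets $d_{\boldsymbol 0}\mathbb H=\Id$, hence $d_0H_n=\Id$ for every $n$ and $\|d_xH_n-\Id\|\le M\|x\|$ uniformly in $n$; the inverse function theorem then gives local invertibility of each $H_n$ on a uniform neighborhood, and the bounds on $d_x^iH_n^{-1}$ in~\eqref{1330} follow from those on $d_x^iH_n$ by differentiating $H_n^{-1}\circ H_n=\Id$ and applying the Fa\`a di Bruno formula. Either way the statement is safe, so your concern, while legitimate and worth recording, is not a fatal gap.
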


\begin{proof}
It follows from Lemmas \ref{lemm}, \ref{der0} and Theorem~\ref{ck-conjuga} (which can be applied as $Y$ is a Hilbert space)
that there exists a local $C^k$ diffeomorphism $\mathbb H\colon Y \to Y$ and a constant $\rho>0$ such that \begin{equation}\label{2022}
\mathbb H(\boldsymbol 0)=\boldsymbol 0,
\end{equation}
and
\begin{equation}\label{GHF}
\mathbb  H\circ \mathbb  F (\mathbf{x})=\mathbb G\circ \mathbb H  (\mathbf{x}) \qquad \text{for} \qquad  \|\mathbf{x}\|\le \rho,
\end{equation}
where $\mathbb F$ and $\mathbb G$ are given by~\eqref{F} and~\eqref{G}, respectively.
Similarly to \eqref{phi-phi}, the desired local conjugacy $H_n
$ can be constructed by
$$
H_n(x):=(\mathbb H({\bf w}))_n, \quad  \text{for  $\|x\| \le \rho$ and
$ n\in \mathbb{Z}$},
$$
where ${\bf w}:=(w_m)_{m\in\mathbb{Z}}\in Y$ is defined by
$w_n:=x$ and $w_m:=0$ for $m\neq n$. Then, similarly to \eqref{F} and \eqref{high-deri}, we see that $H_n$ is $C^k$ and that
\begin{equation}\label{ttt}
d_{x}^i H_n(\zeta^1, \ldots, \zeta^i)=(d_{\bf w}^i \mathbb H(\boldsymbol{\zeta}^1, \ldots, \boldsymbol{\zeta}^i))_n,
\end{equation}
for $1\le i \le k$ and $\zeta^1, \ldots ,\zeta^i \in \mathbb R^d$, where $\boldsymbol{\zeta}^j=(\zeta_m^j)_{m\in \mathbb Z}\in Y$ is defined by $\zeta_n^j:=\zeta^j$ and $\zeta_m^j:=0$ for $m\neq n$. Moreover, each $H_n$ is a local diffeomorphism and $H_n^{-1}(x)=(\mathbb H^{-1}(\mathbf w))_n$.

Observe that~\eqref{Deriv} follows readily from~\eqref{2022} and~\eqref{ttt} with ${\bf w}={\bf 0}$ and $i=1$ and that \eqref{hgf} follows directly from~\eqref{GHF}. On the other hand, since $\mathbb H$ and $\mathbb H^{-1}$ are $C^k$, there exist $M, \tilde\rho>0$ such that for $1\le i\le k$,
\[
\|d_{{\bf w}}^i\mathbb H  \| \le M \quad \text{and} \quad  \|d_{{\bf w}}^i\mathbb H^{-1}\| \le M, \quad \text{for ${\bf w}\in Y$ with $\|{\bf w}\| \le \tilde \rho$.}
\]
Hence, by \eqref{ttt} we obtain that
\[
\begin{split}
\|d_{x}^iH_n (\xi^1, \ldots, \xi^i)\|
&=\|(d_{{\bf w}}^i \mathbb H(\boldsymbol{\zeta}^1, \ldots, \boldsymbol{\zeta}^i))_n\|\le \|d_{\bf w}^i \mathbb H(\boldsymbol{\zeta}^1, \ldots, \boldsymbol{\zeta}^i)\|
\\
& \le M\prod_{j=1}^i \|\boldsymbol{\zeta}^j \|
= M\prod_{j=1}^i  \|\zeta^j\|,
\end{split}
\]
for each $n\in\mathbb Z$ and  $x\in \mathbb R^d$ such that $\|x\| \le \tilde \rho$ (so that $\|{\bf w}\| \le \tilde \rho$) and $1\le i \le k$. This establishes the first estimate in~\eqref{1330}. Similarly, one can establish the second one and the proof of the theorem is completed.
\end{proof}

\subsection{Nonautonomous Takens Theorem}
For $q:=(q_1,\ldots ,q_r),~t:=(t_1,\ldots ,t_r)\in \mathbb{N}_0^r$ with $\mathbb{N}_0:=\mathbb{N}\cup \{0\}$, $a:=(a_1, \ldots ,a_r)\in \mathbb{R}^r$, $b:=(b_1,\ldots ,b_r)\in \mathbb{R}^r$, we denote
\begin{align*}
&|q|:=q_1+\ldots+q_r,\quad [a,b]^q:=[a_1^{q_1}\cdots a_r^{q_r},~b_1^{q_1}\cdots b_r^{q_r}],
\\
&|t|:=t_1+\ldots+t_r,\quad [a,b]^t:=[a_1^{t_1}\cdots a_r^{t_r},~b_1^{t_1}\cdots b_r^{t_r}],
\end{align*}
where $a_i$ and $b_i$ for $1\le i \le r$ are as in~\eqref{SA}.
In addition, for $N\in\mathbb{N}$ and intervals $[c_1,c_2]$, $[c_3,c_4]$ with $c_i\in\mathbb{R}^+$ for $i\in \{1, 2, 3, 4\}$, we denote
$$
[c_1,c_2]^N:=[c_1^N,c_2^N], \quad[c_1,c_2]\cdot[c_3,c_4]:=[c_1c_3,~c_2c_4].
$$
Then, based on Theorem \ref{BS-thm-nonauto}, we reformulate and prove the main theorem of this paper as follows.
\begin{thm}\label{Thm-Takens}
For any $k\in\mathbb{N}$, let $N:=3N_0+1$ where $N_0\in \mathbb{N}$ {\rm (}depending on $k${\rm )} is given in Theorem {\rm \ref{BS-thm-nonauto}}. Suppose that the non-resonance conditions
\begin{align}
&\left([a,b]^q\cdot[\nu_-,\nu_+]^{N} \right)\cap [\nu_-,\nu_+]=\emptyset,
\label{NR-1}\\
&\left( [a,b]^t\cdot[\nu_-,\nu_+]^{N} \right)\cap [a_i,b_i]=\emptyset,\quad i=1,\ldots ,r,
\label{NR-2}
\end{align}
hold for all $|q|\in \{1,\ldots ,N\}$ and $|t|\in \{2, \ldots ,N\}$. Then, any sequence $(F_n)_{n\in\mathbb{Z}}$ of $C^{N+1}$
maps of the form $F_n:=A_n+f_n$ satisfying~\eqref{zero} and~\eqref{small-Lip} is locally $C^k$ conjugated to the sequence of maps
\begin{align}\label{Takens-NF}
\left(
  \begin{array}{lll}
x_s
\vspace{1ex}\\
x_c
\vspace{1ex}\\
x_u
\end{array}
\right)
\mapsto
\left(
  \begin{array}{lll}
A_n^s(x_c)x_s
\vspace{1ex}\\
A_n^cx_c+f_n^c(x_c)
\vspace{1ex}\\
A_n^u(x_c)x_u
\end{array}
\right),\quad \forall n\in\mathbb{Z},
\end{align}
by a conjugacy $(\Psi_n)_{n\in\mathbb{Z}}$, where $A_n^s(x_c): X_s\to X_s$, $A_n^u(x_c): X_u\to X_u$ are linear operators which
are $C^{N-1}$ in $x_c$ such that $A_n^s(0)=A_n^s$, $A_n^u(0)=A_n^u$, and $f_n^c:X_c\to X_c$ is a $C^N$ map. Moreover, for each $n\in\mathbb Z$,
\begin{align}\label{bounded-psi}
\Psi_n^{\pm}(0)=0,
\qquad \|d_{x}^i\Psi_n^{\pm}\|\le \tilde M,\quad i=1,\ldots ,k,
\quad {\rm for}~ x\in\mathbb{R}^d~{\rm near}~0,
\end{align}
where $ \tilde M>0$ is a constant.
Here, $\Psi_n^+=\Psi_n$ and $\Psi_n^-=\Psi_n^{-1}$.
\end{thm}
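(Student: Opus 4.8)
The plan is to combine a finite sequence of polynomial (normal form) coordinate changes with an application of the nonautonomous Belitskii–Samavol Theorem~\ref{BS-thm-nonauto}. First I would work on the center manifold provided by Proposition~\ref{center}: after the $C^N$ coordinate change $x\mapsto x-\phi_n(x_c)$ (straightening the center manifolds $\Gamma_n$ to $\{x_s=x_u=0\}$), the system takes the block form with the center manifold invariant, and the induced dynamics on it is $x_c\mapsto A_n^cx_c+f_n^c(x_c)$ for a $C^N$ map $f_n^c$ with $f_n^c(0)=0$, $d_0f_n^c=0$. The goal of the normal form procedure is then to kill, order by order up to order $N$ and along directions transverse to the center, all the nonlinear terms in the $x_s$- and $x_u$-equations that are \emph{not} of the form $A_n^s(x_c)x_s$ resp. $A_n^u(x_c)x_u$ — i.e.\ all terms which are either independent of $x_{su}$ (but we can already assume those vanish after center-manifold straightening since $\Gamma_n$ is invariant) or at least quadratic in $x_{su}$.

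The heart of the argument is the step-by-step elimination. At stage $m$ (with $2\le m\le N$) one seeks a near-identity polynomial change $\Psi_n = \Id + P_n$, where $P_n$ is a homogeneous-of-degree-$m$ (in the appropriate mixed grading counting $x_{su}$-degree and $x_c$-degree) correction taking values in $X_{su}$, chosen to eliminate the degree-$m$ obstruction term $R_n$ in the conjugated system. Plugging $\Psi_n$ into the conjugacy relation and collecting degree-$m$ terms gives the \emph{homological equation} (the equations labelled~\eqref{elimi-equa-chan} and~\eqref{elimi-equa-proj} in the paper)
\[
P_{n+1}\bigl(A_n^c x_c, A_n^{su} x_{su}\bigr) - A_n^{su}\,P_n(x_c,x_{su}) = R_n(x_c,x_{su}),
\]
which is a linear cohomological equation for the sequence $(P_n)_{n\in\mathbb Z}$ driven by a block-lower-triangular matrix cocycle (see~\eqref{block-low}) acting on the finite-dimensional space of degree-$m$ vector monomials. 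This equation is solvable with a bounded solution precisely when the relevant cocycle admits an exponential dichotomy, i.e.\ when $1\notin\Sigma$ of that cocycle; and the content of the non-resonance hypotheses~\eqref{NR-1}--\eqref{NR-2} is exactly that, for every admissible multi-index, the product interval $[a,b]^t\cdot[\nu_-,\nu_+]^{m}$ stays disjoint from $[a_i,b_i]$ (and similarly for the center block), which — after the key spectral-gap computation for the block-lower-triangular cocycle — guarantees the needed dichotomy. The extra factor $[\nu_-,\nu_+]^N$ is what compensates for the fact that $P_n$ may depend on $x_c$ and that the center block of the cocycle contributes a spectral interval around $[\nu_-,\nu_+]^m$ rather than $\{1\}$; this is where the nonautonomous statement genuinely departs from the autonomous Takens theorem.

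Carrying this out for $m=2,\dots,N$ produces a $C^\infty$ (in fact polynomial, hence $C^{N-1}$ in $x_c$ after composing with the earlier $C^N$ change) sequence of coordinate changes conjugating $(F_n)$ to a system whose $x_s$- and $x_u$-equations agree with the Takens normal form $\bigl(A_n^s(x_c)x_s,\;A_n^cx_c+f_n^c(x_c),\;A_n^u(x_c)x_u\bigr)$ up to terms whose derivatives up to order $N$ vanish on the center manifold $\{x_{su}=0\}$. At this point the two systems — the normalized $(F_n)$ and the exact Takens normal form $(G_n)$ — share the same center manifold and have equal derivatives up to order $N$ along it, so Theorem~\ref{BS-thm-nonauto} applies (this is why $N=3N_0+1$: one needs enough orders, $N\ge N_0$, for the Belitskii–Samavol step, plus the extra orders consumed by the finitely many polynomial changes and the loss of smoothness in $x_c$) and yields a sequence of $C^k$ local diffeomorphisms $(H_n)$ with bounded derivatives conjugating the two. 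Composing $(H_n)$ with all the intermediate polynomial changes and with the center-manifold straightening gives the desired conjugacy $(\Psi_n)$; the estimates~\eqref{bounded-psi} follow from~\eqref{1330} together with the uniform bounds on the polynomial changes, which in turn come from the uniform-in-$n$ bounds~\eqref{new-ET} on the cocycle and the boundedness of the solutions of the homological equations. The main obstacle is precisely the solvability and uniform-boundedness of the homological equations: one must identify the dichotomy spectrum of the block-lower-triangular cocycle~\eqref{block-low} — showing it is contained in the union of the monomial-product intervals occurring in~\eqref{NR-1}--\eqref{NR-2} — and verify that the non-resonance conditions force $1$ out of that spectrum, which is the technical core where the extra term $[\nu_-,\nu_+]^N$ is indispensable.
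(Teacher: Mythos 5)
Your overall architecture matches the paper's: straighten the center manifold via Proposition~\ref{center}, kill the low-order obstruction terms by a finite sequence of near-identity polynomial conjugacies obtained from homological equations driven by a block-lower-triangular cocycle, and then conclude with the nonautonomous Belitskii--Samavol Theorem~\ref{BS-thm-nonauto}. But there is a genuine gap in the normal-form stage.

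You describe the elimination step as seeking a correction $P_n$ \emph{taking values in $X_{su}$}, you write the homological equation with the factor $A_n^{su}$ on the left, and you state the goal as removing nonlinear terms from \emph{the $x_s$- and $x_u$-equations}. That addresses only half of what the Takens normal form~\eqref{Takens-NF} requires. The $x_c$-equation in~\eqref{Takens-NF} is $A_n^c x_c + f_n^c(x_c)$, with no dependence on the hyperbolic variable $v=(x_s,x_u)$; but after center-manifold straightening, the $\pi_c$-component of $\tilde F_n$ still contains terms $f_n^p(x_c,v)$ of $v$-degree $p\ge 1$. Unless those are eliminated, $\check F_n$ and the normal form~\eqref{Takens-NF} do \emph{not} have equal derivatives up to order $N_0$ on $X_c$ (the mixed $\partial_{x_c}^j\partial_v^i$-derivatives of the $x_c$-components differ), so Theorem~\ref{BS-thm-nonauto} cannot be invoked. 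In the paper this is precisely Lemma~\ref{rem-cent}, applied for $p=1,\dots,N_0$ before the $\pi_{su}$-elimination of Lemma~\ref{rem-cent-1} (which you do describe). The center-component homological equation~\eqref{elimi-equa-chan} is the harder of the two: the unknown there takes values in $X_c$ and the equation is driven by $Dw_n(x_c)$, a nonlinear map of $x_c$, so solving it requires passing to a suspended system~\eqref{linear-susp}--\eqref{linear-susp-1} on $X_c\times \Xi^p$, constructing the particular solution $\kappa_n^p$, computing the dichotomy spectrum~\eqref{DS-Delt} of the block-lower-triangular cocycle $(\Delta_n)$ in~\eqref{block-low}, and invoking Proposition~\ref{center} again for the auxiliary cocycle — and it is exactly for the spectral-gap estimates~\eqref{SG-cond-1} and~\eqref{new-gap} arising there that the extra factor $[\nu_-,\nu_+]^{N}$ in~\eqref{NR-1} is essential. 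Your proposal mentions both equation labels in passing but only writes and discusses the $X_{su}$-valued version; as written it would leave the center equation unnormalized and the BS step unjustified.

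A secondary, smaller imprecision: in your displayed homological equation the coefficients are the constant operators $A_n^c$ and $A_n^{su}$, whereas in the paper they must be $w_n(x_c)$, $Dw_n(x_c)$ and $A_n^{su}(x_c)$ — functions of the center variable — which is the reason one cannot solve the equation by a direct summation over $n$ and must instead use the center-manifold device for the suspension $\tilde\Upsilon_n$. If you expand your proposal to include the $\pi_c$-elimination with these $x_c$-dependent coefficients, the argument aligns with the paper's.
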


Remark that \eqref{NR-1} implies
\begin{align}\label{NR-cond-1}
(a_1^{q_1}\cdots a_r^{q_r})\nu_-^{N}>\nu_+\quad {\rm or}\quad (b_1^{q_1}\cdots b_r^{q_r})\nu_+^{N}<\nu_-
\end{align}
and \eqref{NR-2} implies
\begin{align}\label{NR-cond-2}
(a_1^{t_1}\cdots a_r^{t_r})\nu_-^{N}>b_i\quad {\rm or}\quad (b_1^{t_1}\cdots b_r^{t_r})\nu_+^{N}<a_i,\quad i=1,\ldots ,r,
\end{align}
by the notations introduced before the statement of the theorem.

\noindent {\it Proof of Theorem} \ref{Thm-Takens}.
By Proposition \ref{center}, the sequence $(F_n)_{n\in\mathbb Z}$ admits  a nonautonomous $C^N$
center manifold,
which is the sequence of graphs of $C^N$ maps
$
\phi_n:X_c\to X_s\oplus X_u.
$
This enables us to define a sequence of $C^N$ transformations $\varphi_n:\mathbb R^d \to \mathbb R^d$, $n\in\mathbb Z$ by
\begin{equation}\label{trans-center}
    \begin{cases}
        \tilde x_c := x_c,\\
        (\tilde x_s,\tilde x_u):=(x_s,x_u)-\phi_n(x_c),
    \end{cases}
\end{equation}
which satisfies that
\begin{align}\label{bounded-0}
\begin{split}
&\varphi_n^{\pm}(0)=0, \quad d_0\varphi_n^{\pm}=\Id,\quad \|d_{x}\varphi_n^{\pm}(x)-\Id\|\le \tilde \delta,
\\
&\|d_{x}^i\varphi_n^{\pm}(x)\|\le \tilde M,\quad i=2,\ldots ,N,
\quad {\rm for}~ x~{\rm near}~0,
\end{split}
\end{align}
where $\varphi_n^+=\varphi_n$, $\varphi_n^-=\varphi_n^{-1}$ and $\tilde \delta, \tilde M>0$ with the possibility of choosing $\tilde \delta$ arbitrarily small.
Using $(\varphi_n)_{n\in\mathbb Z}$ one can conjugate
$(F_n)_{n\in\mathbb Z}$ to the sequence  $(\tilde F_n)_{n\in\mathbb Z}$, i.e,
$$
\tilde F_n:=\varphi_{n+1}\circ F_n\circ \varphi_n^{-1}=A_n+\tilde f_n,
$$
whose center manifold is straightened up, i.e., $X_c$ is the (nonautonomous) center manifold of $(\tilde F_n)_{n\in\mathbb Z}$.
Notice that, by \eqref{small-Lip}, \eqref{bounded-0} and using smooth cut-off functions, we can assume that
the following conditions hold:
\begin{itemize}
\item $\tilde f_n(0)=0$ and $d_0\tilde f_n=0$;
\item for $2\le i\le  N$,
\begin{equation}\label{small-Lip-tt}
\|d_x\tilde f_n\|\le \tilde \delta,\quad\|d^i_x\tilde f_n\| \le \tilde M, \quad \forall x\in \mathbb R^d.
\end{equation}
\end{itemize}
Put $v:=x_s+x_u\in X_{su}$ and write $\tilde F_n(x_s,x_c,x_u)$ as $\tilde F_n(x_c,v)$. For an integer $p\in\{1,\ldots,N_0\}$,
assume that
$$
\partial_v^i (\pi_c\tilde F_n) (x_c,0)=0\quad{\rm for}~ i=1,\ldots,p-1.
$$
Then, the Taylor expansion of $\tilde F_n$ at $(x_c,0)$ is given by
\begin{equation}\label{after-straight}
    \tilde F_n (x_c,v)=\Big(w_n(x_c)+f_n^p(x_c,v)
    +o(\|v\|^p),~A_n^{su}(x_c)v+O(\|v\|^2)\Big)
\end{equation}
where $w_n(x_c):=\pi_c\tilde F_n(x_c,0)$ is $C^{N-p}$ (actually $C^{N}$) and $A_n^{su}(x_c):=\partial_v (\pi_{su}\tilde F_n) (x_c,0)$ is $C^{N-p}$ (actually $C^{N-1}$)
such that
$$
Dw_n(0)=A_n^c,\qquad A_n^{su}(0)={\rm diag}(A_n^s,A_n^u).
$$
Moreover, $f_n^p(x_c,v):=\partial_v^p (\pi_c\tilde F_n) (x_c,0)v^p$ is a homogeneous polynomial in variable $v$ of order $p$ with coefficients being $C^{N-p}$ maps in $x_c$. In order to eliminate the term $f_n^p(x_c,v)$, we need the following lemma. We note that
below and throughout the rest of the proof, $\tilde \delta, \tilde M>0$ will denote generic constants which can change from one occurrence to the next one and $\tilde \delta$ can be made arbitrarily small.
\begin{lm}\label{rem-cent}
The sequence $(\tilde F_n)_{n\in\mathbb{Z}}$ of $C^{N-p}$ maps is locally conjugated to a sequence $(\hat F_n)_{n\in\mathbb{Z}}$ of $C^{N-p-1}$ maps defined by
$$
\hat F_n(x_c,v):=\Big(w_n(x_c)+o(\|v\|^p),~A_n^{su}(x_c)v+O(\|v\|^2)\Big)
$$
via a sequence $(H_n^p(x_c,v))_{n\in\mathbb{Z}}$, where each $H_n^p-\Id:\mathbb{R}^d\to \mathbb{R}^d$ is a homogeneous polynomial in variable $v$ of order $p$ with coefficients being $C^{N-p-1}$ maps in $x_c$. Moreover,
\begin{align}\label{bounded}
\begin{split}
&(H_n^p)^{\pm}(0)=0,\quad d_0(H_n^p)^{\pm}=\Id, \quad \|d_{x}(H_n^p)^{\pm}-\Id\|\le \tilde \delta,
\\
&\|d_{x}^i(H_n^p)^{\pm}\|\le \tilde M,\quad i=2,\ldots ,N-p,
\quad {\rm for}~ x=(x_c,v)~{\rm near}~0.
\end{split}
\end{align}
\end{lm}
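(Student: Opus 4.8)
The plan is to perform one step of the classical normal-form reduction: to kill the homogeneous term $f_n^p$ by a near-identity change of variables, reading off solvability and regularity from the dichotomy-spectrum machinery of Section~2. First I would look for $H_n^p$ of the form $H_n^p(x_c,v)=(x_c+h_n(x_c,v),\,v)$, with $h_n\colon X_c\times X_{su}\to X_c$ homogeneous of degree $p$ in $v$ and $x_c$-dependent coefficients to be determined, so that $H_n^p-\Id$ is automatically homogeneous of degree $p$ in $v$. Substituting this ansatz into $H_{n+1}^p\circ\tilde F_n=\hat F_n\circ H_n^p$ and collecting, in the Taylor expansion~\eqref{after-straight}, the part of the $\pi_c$-component homogeneous of degree $p$ in $v$, one checks---using $Dw_n(0)=A_n^c$, $A_n^{su}(0)=\mathrm{diag}(A_n^s,A_n^u)$, and the fact that a perturbation of degree $p\ge1$ in $v$ alters the $\pi_{su}$-component only at orders $\ge 2$---that $\hat F_n$ has the required form precisely when $h=(h_n)_{n\in\mathbb Z}$ solves the homological equation
\[
Dw_n(x_c)\,h_n(x_c,v)-h_{n+1}\big(w_n(x_c),\,A_n^{su}(x_c)v\big)=f_n^p(x_c,v),\qquad n\in\mathbb Z .
\]
As usual I would first multiply $w_n$, $A_n^{su}$ and the coefficients of $f_n^p$ by smooth cut-off functions, so that the problem becomes global, the coefficients are globally bounded in $C^{N-p}$, and $w_n,A_n^{su}$ stay uniformly close to $A_n^c,\mathrm{diag}(A_n^s,A_n^u)$; the solution constructed below will agree near $0$ with the desired one.

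Next I would treat the frozen problem $x_c=0$, where $w_n(0)=0$ and both $Dw_n(0)=A_n^c$ and $A_n^{su}(0)=\mathrm{diag}(A_n^s,A_n^u)$ are block-diagonal. The equation becomes $A_n^c u_n-u_{n+1}\circ A_n^{su}(0)=f_n^p(0,\cdot)$ on the finite-dimensional space $\mathcal H_p$ of $X_c$-valued homogeneous maps of degree $p$ on $X_{su}$, that is, the inhomogeneous linear difference equation $u_{n+1}=B_nu_n+g_n$ with $B_nu:=A_n^c\circ u\circ A_n^{su}(0)^{-1}$. Since $A_n^{su}(0)$ is block-diagonal, $B_n$ preserves the splitting of $\mathcal H_p$ by monomials $v^{\alpha}$, where $\alpha=(\alpha_1,\dots,\alpha_r)$ records how many factors of $v$ lie in each block $X_i$ (so $|\alpha|=p$); evaluating $B_n$ on these monomials and using~\eqref{new-ET}, one finds that $1\notin\Sigma(B_n)$ follows once $[a,b]^{\alpha}\cap[\nu_-,\nu_+]=\emptyset$ for every such $\alpha$, which is in turn implied by~\eqref{NR-cond-1} (the latter being stronger, the factors $\nu_-^N,\nu_+^N$ only enlarging the interval). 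Hence $(B_n)_{n\in\mathbb Z}$ admits an exponential dichotomy, so by the admissibility characterisation recalled in Section~2 the frozen equation has a unique bounded solution, whose norm is controlled by the dichotomy constants times $\sup_n\|f_n^p(0,\cdot)\|$.

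The hard part will be the genuine dependence on the center variable, and this is where the extra factor $[\nu_-,\nu_+]^N$ is needed. I would bring $x_c$ in by differentiating the homological equation in $x_c$ up to order $N-p-1$; this yields a block lower-triangular system, the equation for $\partial_{x_c}^jh_n$ carrying the same left-hand operator as the frozen one but twisted by up to $j$ factors of $Dw_n(x_c)\approx A_n^c$ (from the chain rule applied to $h_{n+1}(w_n(x_c),\cdot)$), with a right-hand side built from $\partial_{x_c}^jf_n^p$ and the already-constructed lower-order derivatives. This is precisely the block lower-triangular matrix cocycle flagged in the introduction: its dichotomy spectrum is the union of those of its diagonal blocks, and these are obtained from $\Sigma(B_n)$ by an additional shift by a factor $[\nu_-,\nu_+]^j$, $0\le j\le N-p-1$, in either direction. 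The factor $[\nu_-,\nu_+]^N$ in~\eqref{NR-1} is exactly what keeps $1$ out of this enlarged set---equivalently, it secures the required spectral-gap conditions---uniformly over the derivative order; in the autonomous case $\nu_\pm=1$ it collapses and the classical condition suffices. Solving these equations successively in increasing order of $j$, each via the corresponding exponential dichotomy and in the appropriate bounded $C^j$-space in $x_c$, produces $h=(h_n)_{n\in\mathbb Z}$ with coefficients of class $C^{N-p-1}$ in $x_c$---one derivative worse than $f_n^p$, which accounts for the loss of regularity in the statement.

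It remains to record the quantitative conclusions. Each $h_n$ is homogeneous of degree $p\ge1$ in $v$, so $h_n(x_c,0)=0$ and $H_n^p(0)=0$; for $p\ge2$ then $d_0h_n=0$ trivially, while for $p=1$ we have $f_n^1(0,\cdot)=\pi_c A_n\pi_{su}+\pi_c(d_0\tilde f_n)\pi_{su}=0$ (both terms vanish, $A_n$ being block-diagonal and $d_0\tilde f_n=0$), whence $h_n(0,\cdot)=0$ by the uniqueness in the frozen problem; in either case $d_0H_n^p=\Id$. The smallness of $f_n^p$ near the origin---for $p=1$ from~\eqref{small-Lip-tt}, for $p\ge2$ from homogeneity---together with the uniform boundedness of the inverse homological operator gives $\|d_xh_n\|\le\tilde\delta$ near $0$, hence $\|d_xH_n^p-\Id\|\le\tilde\delta$, and bounds $\|d_x^iH_n^p\|\le\tilde M$ for the higher derivatives; thus each $H_n^p$ is a local diffeomorphism, \eqref{bounded} holds, and the same reasoning applies to $(H_n^p)^{-1}$. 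Finally $\hat F_n:=H_{n+1}^p\circ\tilde F_n\circ(H_n^p)^{-1}$ has the stated form and is of class $C^{N-p-1}$, which completes the proof.
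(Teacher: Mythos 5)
Your ansatz and the homological equation you derive agree with the paper's (compare with \eqref{elimi-equa-ori}--\eqref{elimi-equa-chan}), and the frozen $x_c=0$ analysis and the quantitative verification of \eqref{bounded} are fine. However, the central step---solving the homological equation with genuine $x_c$-dependence---is handled by a genuinely different route in the paper, and your version of that step has a real gap.

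The paper does not differentiate the homological equation. It lifts the problem to the map $\Upsilon_n\colon X_c\times\Xi^p\to X_c\times\Xi^p$ of~\eqref{linear-susp}, observes that solving the homological equation is equivalent to finding a $\Upsilon_n$-invariant manifold over the $X_c$-factor, shifts by the frozen solution $\kappa_n^p$ to get $\tilde\Upsilon_n$ with a fixed point at the origin, computes the $2\times 2$ block lower-triangular linearization $\Delta_n$ of~\eqref{block-low} (blocks are $A_n^c$ on $X_c$ and $L_n(0)$ on $\Xi^p$, coupled one way by $M_n$), shows $(\Delta_n)$ admits a strong exponential trichotomy with spectrum in $(0,\mu_1]\cup[\nu_-,\nu_+]\cup[\mu_2^{-1},\infty)$, and then applies Proposition~\ref{center}. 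The $C^{N-p-1}$ center manifold of $(\tilde\Upsilon_n)$ is exactly the graph of $x_c\mapsto h_n^p(x_c,\cdot)$, which delivers existence and $x_c$-regularity simultaneously. Note that the ``block lower-triangular cocycle'' flagged in the introduction \emph{is} this $\Delta_n$, not the derivative hierarchy you describe.

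Your proposal instead freezes $x_c=0$, solves by admissibility, and then claims to recover the $x_c$-dependence by ``differentiating in $x_c$ up to order $N-p-1$'' and solving the resulting hierarchy ``via the corresponding exponential dichotomy in the appropriate bounded $C^j$-space in $x_c$.'' This step does not go through as stated. Once $x_c\ne 0$, the equation for $\partial_{x_c}^j h_n$ involves $\partial_{x_c}^j h_{n+1}$ evaluated at $w_n(x_c)$ and twisted by $(Dw_n(x_c))^{\otimes j}$; this is a cohomological (invariant-section) equation over the nontrivial center dynamics $x_c\mapsto w_n(x_c)$, not a sequence of constant-in-$x_c$ linear difference equations to which the dichotomy/admissibility lemmas of Section~2 apply. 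If you instead evaluate everything at $x_c=0$ (using $w_n(0)=0$) you do get a tractable lower-triangular system, but it only produces the $(N-p-1)$-jet of $h_n$ at the origin---not a function of $x_c$ on a neighbourhood---and you offer no bridge from the jet to a $C^{N-p-1}$ solution. That bridge is precisely what the center manifold machinery supplies in the paper's proof, and without a substitute (e.g.\ a Lyapunov--Perron fixed-point argument in a weighted $C^{N-p-1}$ section space, with the regularity loss tracked explicitly) the argument is incomplete. Your spectral bookkeeping (the $[\nu_-,\nu_+]^j$ shifts, and why $[\nu_-,\nu_+]^N$ in \eqref{NR-1} dominates them) points in the right direction, but it is attached to the wrong object and does not by itself yield the claimed $C^{N-p-1}$ coefficients.
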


\noindent {\it Proof of Lemma} \ref{rem-cent}.
Let $h_n^p:X_c\times X_{su}\to X_c$, $n\in\mathbb Z$ be a sequence of homogeneous polynomials in variable $v$ of order $p$ with coefficients being of  a certain smoothness in $x_c$ (which will be proved to be $C^{N-p}$),
and consider the transformations
\begin{equation}\label{elimi-coor-chang}
    H_n^p:(x_c,v)\mapsto (x_c+h_n^p(x_c,v),v)
\end{equation}
together with their inverses
\begin{equation}\label{elimi-coor-chang-inver}
    (H_n^p)^{-1}:(x_c,v)\mapsto (x_c-h_n^p(x_c,v)+o(\|v\|^p),v).
\end{equation}
Combining \eqref{after-straight} with \eqref{elimi-coor-chang}, we calculate that
\begin{align*}
  &H^p_{n+1}\circ \tilde F_n(x_c,v)
   \\
   & = \Big(w_n(x_c) + f_n^p(x_c,v)+ h_{n+1}^p(w_n(x_c),A_n^{su}(x_c)v)+o(\|v\|^p),
   A_n^{su}( x_c)v+O(\|v\|^2)\Big).
\end{align*}
Employing \eqref{elimi-coor-chang-inver}, we further obtain that
\begin{align*}
&   H^p_{n+1}\circ \tilde F_n\circ (H_n^p)^{-1}(x_c,v)
   \\
   & = \Big(w_n(x_c) - Dw_n(x_c) h_n^p(x_c,v) + f_n^p(x_c,v)+ h_{n+1}^p(w_n(x_c),A_n^{su}(x_c)v)+o(\|v\|^p),
   \\
   &\qquad A_n^{su}(x_c)v+O(\|v\|^2) \Big),
\end{align*}
where the term $O(\|v\|^2)$ has the same smoothness as $h_n^p(\cdot,v)$.
In order to eliminate the term of order $p$, it suffices to solve the following nonautonomous equation
\begin{equation}\label{elimi-equa-ori}
    - Dw_n(x_c) h_n^p(x_c,v) + f_n^p(x_c,v)+ h_{n+1}^p(w_n(x_c),A_n^{su}(x_c)v)=0, \quad \forall n\in\mathbb Z,
\end{equation}
which is equivalent to
\begin{equation}\label{elimi-equa-chan}
    - Dw_n(x_c) h_n^p(x_c,A_n^{su}(x_c)^{-1}v) + f_n^p(x_c,A_n^{su}(x_c)^{-1}v)+ h_{n+1}^p(w_n(x_c),v)=0,
\end{equation}
by replacing $v$ with $A_n^{su}(x_c)^{-1}v$. We recall that all the known maps in the above equations are $C^{N-p}$ in $x_c$. Equation \eqref{elimi-equa-chan} can be seen as  a {\it nonautonomous homological equation} (cf. \cite[p.182]{Arnold-book}), which depends on the center variable $x_c$. This is the essential difference from the hyperbolic case and we will use the center manifold theory to solve it.

Let $\Xi^p$
be the linear space of all homogeneous vector polynomials of order $p$ in the variable $v$ whose elements are of the form
$$
\sum_{q}C^qx_1^{q_1}\cdots x_r^{q_r},
$$
where $C^q:X_1^{q_1}\times\cdots \times X_r^{q_r}\to X_c$ are multilinear operators and $|q|=p$. Let $\Upsilon_n: \tilde \Xi^p_c:=X_c\times \Xi^p\to \tilde \Xi^p_c$, $n\in\mathbb Z$ be a sequence of $C^{N-p}$ maps defined by
\begin{equation}\label{linear-susp}
  \Upsilon_n
 \colon (x_c,\psi^p(v))\mapsto \Big(w_n(x_c), ~L_n(x_c) \psi^p(v) -f_n^p(x_c,A_n^{su}(x_c)^{-1}v) \Big),
\end{equation}
where $L_n(x_c):\Xi^p\to \Xi^p$ is a linear operator defined by
\begin{equation}\label{def-L}
    L_n(x_c) \psi^p(v) := Dw_n(x_c) \psi^p(A_n^{su}(x_c)^{-1}v),\quad \forall \psi^p(v)\in \Xi^p.
\end{equation}
We observe that if the sets ${\mathcal M}_n:=\{(x_c,h_n^p(x_c,v))\in \tilde \Xi^p_c: x_c\in X_c\}$, $n\in\mathbb{Z}$ are invariant under $(\Upsilon_n)_{n\in\mathbb{Z}}$, i.e., $\Upsilon_n({\mathcal M}_n)\subset{\mathcal M}_{n+1}$,
then
$$
h_{n+1}^p(w_n(x_c),v)=L_n(x_c) h_n^p(x_c,v) -f_n^p(x_c,A_n^{su}(x_c)^{-1}v),
$$
which is exactly the equation \eqref{elimi-equa-chan}. Thus, we convert the problem of solving equation \eqref{elimi-equa-chan} to the problem of finding an invariant manifold $({\mathcal M}_n)_{n\in\mathbb{Z}}$ of $(\Upsilon_n)_{n\in\mathbb{Z}}$.

In order to find an invariant manifold of $(\Upsilon_n)_{n\in\mathbb{Z}}$, we consider the shift $\tilde \Upsilon_n$ of $\Upsilon_n$ defined by
\begin{align}\label{linear-susp-1}
(x_c,\psi^p(v))
\mapsto \Big(w_n(x_c), ~L_n(x_c) (\psi^p(v)+\kappa_n^p(v))-f_n^p(x_c,A_n^{su}(x_c)^{-1}v) -\kappa_{n+1}^p(v)\Big),
\end{align}
where $(\kappa_n^p(v))_{n\in\mathbb{Z}}$ is a solution of the equation
\begin{align}\label{def-kap}
L_n(0)\kappa_n^p(v)-\kappa_{n+1}^p(v)=f_n^p(0,A_n^{su}(0)^{-1}v).
\end{align}
Set\begin{align*}
&S^p_+:=\{q:=(q_1,\ldots ,q_r)\in \mathbb{N}_0^r:
a_1^{q_1}\cdots a_r^{q_r}>\nu_+,~|q|=p\},
\\
&S^p_-:=\{q:=(q_1,\ldots ,q_r)\in \mathbb{N}_0^r:
b_1^{q_1}\cdots b_r^{q_r}<\nu_-,~|q|=p\},
\end{align*}
and let $\Xi^{p,+}, \Xi^{p,-}$ be two subspaces of $\Xi^p$ with elements
$$
\psi^{p,+}(v):=\sum_{q\in S_+^p}C_+^qx_1^{q_1}\cdots x_r^{q_r},\qquad
\psi^{p,-}(v):=\sum_{q\in S_-^p}C_-^qx_1^{q_1}\cdots x_r^{q_r},
$$
respectively, where $C_\pm^q:X_1^{q_1}\times\cdots \times X_r^{q_r}\to X_c$ are multilinear operators. By \eqref{NR-cond-1} we know that either $a_1^{q_1}\cdots a_r^{q_r}\ge a_1^{q_1}\cdots a_r^{q_r}\nu_-^N>\nu_+$ or $b_1^{q_1}\cdots b_r^{q_r}\le b_1^{q_1}\cdots b_r^{q_r}\nu_+^N<\nu_-$, which implies that
\begin{align}\label{xi+-}
\Xi^p=\Xi^{p,+}\oplus\Xi^{p,-},
\end{align}
and by \eqref{def-L} we know that $\Xi^{p,\pm}$ are invariant under $(L_n(0))_{n\in\mathbb{Z}}$. Let
${\mathcal L}(m,n)$ be the linear cocycle generated by $(L_n(0))_{n\in\mathbb{Z}}$.
By iterating \eqref{def-kap}, we obtain that
\begin{align}\label{posi-ite}
\kappa_{n}^p(v) = {\mathcal L}(n,m) \kappa_{m}^p(v)-\sum_{i=m}^{n-1} {\mathcal L}(n,i+1) f_i^p(0,A_i^{su}(0)^{-1}v)
\end{align}
for $n\ge m$ and that
\begin{align}\label{neg-ite}
\kappa_{n}^p(v) = {\mathcal L}(n,m) \kappa_{m}^p(v) + \sum_{i=n}^{m-1} {\mathcal L}(n,i+1) f_i^p(0,A_i^{su}(0)^{-1}v)
\end{align}
for $n\le m$.

In order to further compute the above $ \kappa_{m}^p(v)$, we define the norm
$$
\|\psi^{p,\pm}(v)\|:=\max_{q\in S_\pm^p}\sup_{\|x_1\|\ne 0,\ldots ,\|x_r\|\ne 0}\frac{\|C_\pm^qx_1^{q_1}\cdots x_r^{q_r}\|}{\|x_1\|^{q_1}\cdots \|x_r\|^{q_r}},\qquad \forall \psi^{p,\pm}(v)\in \Xi^{p,\pm},
$$
and let
$$
{\mathcal A}_c(n,m):={\mathcal A}(n,m)|_{X_c},\qquad
{\mathcal A}_i(n,m):={\mathcal A}(n,m)|_{X_i},\quad i=1,\ldots ,r.
$$
Then, for $n\ge m$ we obtain from \eqref{def-L} and \eqref{new-ET} (see also Remark~\ref{simpl}) that
\begin{align}\label{L+}
& \|{\mathcal L}(n,m)\psi^{p,+}(v)\|
\nonumber\\
&=\|Dw_{n-1}(0)\cdots Dw_m(0) \psi^{p,+}(A_m^{su}(0)^{-1}\cdots A_{n-1}^{su}(0)^{-1}v)\|
\nonumber\\
&= \max_{q\in S_+^p}\sup_{\|x_1\|\ne 0,\ldots ,\|x_r\|\ne 0}
\frac{\|{\mathcal A}_c(n,m) C_+^q\{{\mathcal A}_1(m,n)x_1\}^{q_1}\cdots\{{\mathcal A}_r(m,n)x_r\}^{q_r}\|}{\|x_1\|^{q_1}\cdots \|x_r\|^{q_r}}
\nonumber\\
&\le C \nu_+^{n-m}\max_{q\in S_+^p}\sup_{\|x_1\|\ne 0,\ldots,\|x_r\|\ne 0}\frac{\|C_+^q\{{\mathcal A}_1(m,n)x_1\}^{q_1}\cdots\{{\mathcal A}_r(m,n)x_r\}^{q_r}\|}{\|{\mathcal A}_1(m,n)x_1\|^{q_1}\cdots\|{\mathcal A}_r(m,n)x_r\|^{q_r}}
\nonumber\\
&\quad\cdot\frac{\|{\mathcal A}_1(m,n)x_1\|^{q_1}\cdots\|{\mathcal A}_r(m,n)x_r\|^{q_r}}{\|x_1\|^{q_1}\cdots \|x_r\|^{q_r}}
\nonumber\\
&\le C\nu_+^{n-m}\max_{q\in S_+^p}(a_1^{q_1}\cdots a_r^{q_r})^{-(n-m)}\|\psi^{p,+}(v)\|
\nonumber\\
&= C\mu_1^{n-m}\|\psi^{p,+}(v)\|,\qquad \forall \psi^{p,+}(v)\in \Xi^{p,+},
\end{align}
where $\mu_1:=\max_{q\in S_+^p}\{\nu_+/(a_1^{q_1}\cdots a_r^{q_r})\}\in (0,1)$. Similarly, for $n\le m$ we also have that
\begin{align}\label{L-}
\|{\mathcal L}(n,m)\psi^{p,-}(v)\|
\le C\mu_2^{m-n}\|\psi^{p,-}(v)\|,\qquad \forall \psi^{p,-}(v)\in \Xi^{p,-},
\end{align}
where $\mu_2:=\max_{q\in S_-^p}\{(b_1^{q_1}\cdots b_r^{q_r})/\nu_-\}\in (0,1)$.  Now,
projecting \eqref{posi-ite} onto $\Xi^{p,+}$ (letting $m\to -\infty$) and \eqref{neg-ite} onto $\Xi^{p,-}$ (letting $m\to +\infty$), respectively, we see from \eqref{L+}-\eqref{L-} that
\begin{align*}
 \kappa_{n}^p(v)
 &= -\sum_{i=-\infty}^{n-1} {\mathcal L}(n,i+1) \Pi^{+}  f_i^p(0,A_i^{su}(0)^{-1}v)
 \\
 & \quad +\sum_{i=n}^\infty {\mathcal L}(n,i+1) \Pi^{-}  f_i^p(0,A_i^{su}(0)^{-1}v),
\end{align*}
where $\Pi^\pm:\Xi^p\to \Xi^{p,\pm}$ are projections corresponding to the decomposition \eqref{xi+-}.
It implies that
 for any $n\in\mathbb{Z}$,
\begin{align}\label{kapa-0}
\|\kappa_{n}^p(v)\|
&\le C\left(\sum_{i=-\infty}^{n-1}\mu_1^{n-i-1}
+\sum_{i=n}^{+\infty}\mu_2^{i+1-n}\right)\|f_i^p(0,A_i^{su}(0)^{-1}v)\|
\nonumber\\
&\le C\left(\sum_{i=-\infty}^{n-1}\mu_1^{n-i-1}
+\sum_{i=n}^{+\infty}\mu_2^{i+1-n}\right)\tilde M=:M_\kappa
\end{align}
by \eqref{small-Lip-tt} since $f_n^p(x_c,v):=\partial_v^p (\pi_c\tilde F_n) (x_c,0)v^p$ (see~\eqref{after-straight}), where
we set
\[
 \| \psi^p(v)\|:=\max \{\| \Pi^+\psi^p(v)\|, \|\Pi^- \psi^p(v)\|\}, \quad \psi^p(v)\in \Xi^p.
\]
Thus, we find the solution $\kappa_{n}^p(v)$ of equation \eqref{def-kap} such that \eqref{kapa-0} holds.

Next, we continue to investigate the map $\tilde \Upsilon_n$ given by \eqref{linear-susp-1}. It is clear that $(0,0)$ is the fixed point of $\tilde \Upsilon_n$ by \eqref{def-kap}
and that the derivative of $\tilde \Upsilon_n$ at $(0,0)$ has the form
\begin{equation}\label{block-low}
    {\Delta}_n=\begin{pmatrix}
    A_n^c &0\\
    M_n(v)&L_n(0)
    \end{pmatrix},
\end{equation}
where $A_n^c=Dw_n(0)$ and $M_n(v):X_c\to \Xi^p$ is given by
\begin{align*}
&M_n(v)
\\
&:=\frac{\partial\{L_n(x_c) (\psi^p(v)+\kappa_n^p(v))-f_n^p(x_c,A_n^{su}(x_c)^{-1}v) -\kappa_{n+1}^p(v)\}}{\partial x_c}\Big|_{x_c=0,~\psi^p(v)=0}
\\
&=\frac{\partial\{Dw_n(x_c) \kappa_n^p(A_n^{su}(x_c)^{-1}v)-f_n^p(x_c,A_n^{su}(x_c)^{-1}v) \}}{\partial x_c}\Big|_{x_c=0},
\end{align*}
by using~\eqref{def-L}.
Recalling that
$w_n(x_c)=\pi_c\tilde F_n(x_c,0)$,
 and by \eqref{small-Lip-tt} and \eqref{kapa-0}, we  see that
\begin{align}\label{M-bound}
\|M_n(v)\|\le \tilde M,\qquad \forall n\in\mathbb{Z}.
\end{align}
Now, we claim that the 
sequence $(\Delta_n)_{n\in\mathbb{Z}}$
admits an exponential trichotomy  and that  its dichotomy spectrum satisfies
\begin{align}\label{DS-Delt}
\Sigma(\Delta_n)\subset(0, \mu_1]\cup[\nu_-,\nu_+]\cup[\mu_2^{-1},+\infty).
\end{align}
In fact, notice that $\tilde\Xi^{p,\pm}:=\{0\}\times\Xi^{p,\pm}$ are two invariant
subspaces of $\tilde \Xi^p_c:=X_c\times \Xi^p$ under $(\Delta_n)_{n\in\mathbb{Z}}$ and that we can equip $\tilde \Xi^p_c$ with the norm
\[
\|(x_c, \psi^p(v))\|:=\max \{\|x_c\|, \|\psi^p(v)\| \}, \quad (x_c, \psi^p(v))\in \tilde \Xi^p_c.
\]
Moreover,  \eqref{L+} and \eqref{L-} give that
\begin{align}\label{susp-ET-su-0}
\begin{split}
&\|\Delta_{m-1}\cdots \Delta_n(0,\psi^{p,+}(v))\|
\\
&=\|{\mathcal L}(m,n)\psi^{p,+}(v)\|
\le C\mu_1^{m-n}\|(0, \psi^{p,+}(v))\| \quad {\rm for}~ m\ge n,
\\
&\|\Delta_{m}^{-1}\cdots \Delta_{n-1}^{-1}(0,\psi^{p,-}(v))\|
\\
&=\|{\mathcal L}(m,n)\psi^{p,-}(v)\|\le C\mu_2^{n-m}\|(0, \psi_n^{p,-}(v))\|
 \quad {\rm for}~ m\le n.
\end{split}
\end{align}
Consequently,
$\tilde \Xi^{p,\pm}$ can be regarded as the stable and unstable subspaces of $(\Delta_n)_{n\in\mathbb{Z}}$, respectively.

Furthermore, in order to find the center subspace of $(\Delta_n)_{n\in\mathbb{Z}}$, for
any given $x_c\in X_c$, let $\epsilon_n(v)\in \Xi^{p}$, $n\in\mathbb{Z}$
be defined by
\begin{align}\label{ep-def}
\epsilon_n(v):=\sum_{i=-\infty}^{n-1}{\mathcal L}(n,i+1)\Pi^+S_i(v)x_c
-\sum_{i=n}^{+\infty}{\mathcal L}(n,i+1)\Pi^-S_{i}(v)x_c,
\end{align}
where
$
S_i(v):=M_i(v){\mathcal A}_c(i,0):X_c\to \Xi^p
$
for $i\in\mathbb{Z}$. We claim that $\epsilon_n(v)$ is well-defined.
 In fact, by \eqref{new-ET} (see also Remark~\ref{simpl}) and \eqref{M-bound}, we have
\begin{align*}
&\sup_{i\le 0}\{\nu_-^{-i-1}\|S_i(v)\|\}\le \tilde M\sup_{i\le 0}\{\nu_-^{-i-1}\|{\mathcal A}_c(i,0)\|\}<\infty,
\\
&\sup_{i\ge 0}\{\nu_+^{-i-1}\|S_i(v)\|\}\le \tilde M\sup_{i\ge 0}\{\nu_+^{-i-1}\|{\mathcal A}_c(i,0)\|\}<\infty.
\end{align*}
The above together with~\eqref{L+} and~\eqref{L-} implies that for any $n\ge 0$
\begin{align*}
\|\epsilon_n(v)\|
&\le \nu_-^{n}\sum_{i=-\infty}^{0}C(\mu_1/\nu_-)^{n-i-1}\nu_-^{-i-1}\|\Pi^+S_{i}(v)\|
\\
&\quad+ \nu_+^{n}\sum_{i=1}^{n-1}C(\mu_1/\nu_+)^{n-i-1}\nu_+^{-i-1}\|\Pi^+S_{i}(v)\|
\\
&\quad+\nu_+^{n}\sum_{i=n}^{+\infty}C(\mu_2\nu_+)^{i+1-n}\nu_+^{-i-1}\|\Pi^-S_{i}(v)\|<\infty,
\end{align*}
and that for any $n\le 0$
\begin{align*}
\|\epsilon_n(v)\|
&\le \nu_-^{n}\sum_{i=-\infty}^{n-1}C(\mu_1/\nu_-)^{n-i-1}\nu_-^{-i-1}\|\Pi^+S_{i}(v)\|
\\
&\quad+ \nu_-^{n}\sum_{i=n}^{-1}C(\mu_2\nu_-)^{i+1-n}\nu_-^{-i-1}\|\Pi^-S_{i}(v)\|
\\
&\quad+\nu_+^{n}\sum_{i=0}^{+\infty}C(\mu_2\nu_+)^{i+1-n}\nu_+^{-i-1}\|\Pi^-S_{i}(v)\|<\infty,
\end{align*}
since
\begin{align}\label{SG-cond-1}
\begin{split}
&\frac{\mu_1}{\nu_-}=\max_{q\in S_+^p}\{\nu_+/(\nu_-a_1^{q_1}\cdots a_r^{q_r})\}\in (0,1),\quad \frac{\mu_1}{\nu_+}=\max_{q\in S_+^p}\{1/a_1^{q_1}\cdots a_r^{q_r}\}\in (0,1),
\\
&\mu_2\nu_+=\max_{q\in S_-^p}\{(\nu_+b_1^{q_1}\cdots b_r^{q_r})/\nu_-\}\in (0,1),\quad \mu_2\nu_-=\max_{q\in S_-^p}\{b_1^{q_1}\cdots b_r^{q_r}\}\in (0,1),
\end{split}
\end{align}
due to \eqref{NR-cond-1}.
 Then,  $\epsilon_n(v)$ is well-defined for $n\in\mathbb{Z}$ and
\begin{align}\label{center+-}
\sup_{n\ge 0}(\nu_+^{-n}\|\epsilon_n(v)\|)<\infty,\qquad\sup_{n\le 0}(\nu_-^{-n}\|\epsilon_n(v)\|)<\infty.
\end{align}
Moreover, we verify that
\begin{align*}
\Delta_n({\mathcal A}_c(n,0)x_c, \epsilon_n(v))
&=\Big({\mathcal A}_c(n+1,0)x_c,~M_n{\mathcal A}_c(n,0)x_c+L_n(0)\epsilon_n(v)\Big)
\\
&=\Big({\mathcal A}_c(n+1,0)x_c,~\sum_{i=-\infty}^{n-1}{\mathcal L}(n+1,i+1)\Pi^+S_i(v)
\\
&\qquad-\sum_{i=n}^{+\infty}{\mathcal L}(n+1,i+1)\Pi^-S_{i}(v)+S_{n}(v)\Big)
\\
&=({\mathcal A}_c(n+1,0)x_c, \epsilon_{n+1}(v)),
\end{align*}
and therefore the sequence $\{({\mathcal A}_c(n,0)x_c, \epsilon_n(v))\in \tilde \Xi^p_c:n\in\mathbb{Z}\}$ is an orbit of $(\Delta_n)_{n\in\mathbb{Z}}$. Conversely, using the Variation of Constants Formula, we can prove that for any given $x_c\in X_c$ if $(\epsilon_n(v))_{n\in\mathbb{Z}}\subset \Xi^p$ satisfies \eqref{center+-} and $\{({\mathcal A}_c(n,0)x_c, \epsilon_n(v))\in \tilde \Xi^p_c:n\in\mathbb{Z}\}$ is an orbit of $(\Delta_n)_{n\in\mathbb{Z}}$, then $\epsilon_n(v)$ are given by \eqref{ep-def}. We conclude that
\begin{itemize}
\item
for any $x_c\in X_c$, there is a unique $(\epsilon_n(v))_{n\in\mathbb{Z}}$ such that $\{({\mathcal A}_c(n,0)x_c, \epsilon_n(v))\in \tilde \Xi^p_c:n\in\mathbb{Z}\}$ is an orbit of $(\Delta_n)_{n\in\mathbb{Z}}$.
\end{itemize}
The sequence $(\epsilon_n(v))_{n\in\mathbb{Z}}$ with the above property corresponding to $x_c\in X_c$
will be denoted by $(\epsilon_n(x_c,v))_{n\in\mathbb{Z}}$.

The above discussion enables us to define
$$
\tilde \Xi_n^c:=\{(x_c,\epsilon_n(x_c,v))\in \tilde \Xi^p_c:x_c\in X_c\},\quad \forall n\in\mathbb{Z},
$$
which is clearly a (nonautonomous) linear subspace invariant under $(\Delta_n)_{n\in\mathbb{Z}}$, i.e. $\Delta_n \tilde \Xi_n^c=\tilde \Xi_{n+1}^c$ for $n\in \mathbb Z$.
Moreover,
$$
\tilde \Xi^p_c=\tilde\Xi^{p,+}\oplus \tilde\Xi_n^c \oplus \tilde\Xi^{p,-}, \quad n\in \mathbb Z.
$$
Denote the projections corresponding to the above decomposition by
$\tilde\Pi_n^+$, $\tilde\Pi_{n}^c$ and $\tilde\Pi_n^-$, respectively. Note that
\begin{align*}
&\tilde\Pi_{n}^c(x_c, \psi^p):=(x_c, \epsilon_n(x_c, v)), \quad\tilde \Pi_n^+(x_c, \psi^p):=(0, \Pi^+(\psi^p-\epsilon_n(x_c, v)),
\\
&\tilde \Pi_n^-(x_c, \psi^p):=(0, \Pi^-(\psi^p-\epsilon_n(x_c, v)),\quad \forall \psi^p\in \Xi^p.
\end{align*}
Then, \eqref{susp-ET-su-0} gives
\begin{align}\label{susp-ET-su}
\begin{split}
&\|\Delta_{m-1}\cdots \Delta_n\tilde\Pi_n^{+}(x_c, \psi_n^p(v))\|
\le C\mu_1^{m-n}\|\tilde\Pi_n^{+}(x_c, \psi_n^p(v))\| \quad {\rm for}~ m\ge n,
\\
&\|\Delta_{m}^{-1}\cdots \Delta_{n-1}^{-1}\tilde\Pi_n^{-}(x_c, \psi_n^p(v))\|
\le C\mu_2^{n-m}\|\tilde\Pi_n^{-}(x_c, \psi_n^p(v))\|
 \quad {\rm for}~ m\le n.
\end{split}
\end{align}
Moreover, we obtain from the last two inequalities of \eqref{new-ET} (see also Remark~\ref{simpl})
and \eqref{center+-} that
\begin{align}\label{susp-ET-c}
\begin{split}
&\|\Delta_{m-1}\cdots\Delta_n\tilde \Pi_n^c(x_c, \psi_n^p(v))\|\le C\nu_+^{m-n}\|\tilde\Pi_n^c(x_c, \psi_n^p(v))\| \quad {\rm for}~ m\ge n,
\\
&\|\Delta_{m}^{-1}\cdots\Delta_{n-1}^{-1}\tilde \Pi_n^c(x_c, \psi_n^p(v))\|\le C\nu_-^{m-n}\|\tilde\Pi_n^c(x_c, \psi_n^p(v))\| \quad {\rm for}~ m\le n.
\end{split}
\end{align}
Combining \eqref{susp-ET-su} with \eqref{susp-ET-c}, we conclude that the linear cocycle generated by $(\Delta_n)_{n\in\mathbb{Z}}$
admits an exponential trichotomy and its dichotomy spectrum satisfies
\eqref{DS-Delt} due to \eqref{def-sig} and Remark \ref{simpl}. Moreover, since $\sup_{n\in \mathbb Z}\|\Delta_n\|<+\infty$ and $\sup_{n\in \mathbb Z}\|\Delta_n^{-1}\|<+\infty$, $(\Delta_n)_{n\in\mathbb{Z}}$ admits a strong exponential trichotomy.
This implies that the system
$(\tilde \Upsilon_n)_{n\in\mathbb{Z}}$ defined by (\ref{linear-susp-1}) has a (nonautonomous) $C^{N-p-1}$
center manifold, which has bounded derivatives up order $N-p-1$ near $(0,0)$, provided that the spectral gap condition
\begin{align}\label{new-gap}
\nu_+^{N-p-1}< \mu_2^{-1},\qquad \nu_-^{N-p-1}>\mu_1
\end{align}
holds (see Proposition~\ref{center}). Thus, since \eqref{NR-cond-1} implies~\eqref{new-gap} (actually it implies that $\nu_+^{N}< \mu_2^{-1}$, $\nu_-^{N}>\mu_1$),
we can conclude  that the equation \eqref{elimi-equa-chan} has a $C^{N-p-1}$ (in $x_c$)
solution, which has bounded derivatives up order $N-p-1$ near $(0,\kappa_n^p(0,v))$, due to the discussion given in the second paragraph of this proof, and therefore we find the desired sequence $(H_n^p(x_c,v))_{n\in\mathbb{Z}}$ given in the formulation of Lemma \ref{rem-cent}. Moreover, it is clear that \eqref{bounded} holds locally since $H_n^p$ is a near-identity map. This completes the proof of this lemma. \hfill $\Box$

Applying Lemma \ref{rem-cent} $N_0$ times, we eliminate all terms $f_n^p(x_c,v)$ for $p=1,\ldots ,N_0$ via $N_0$  conjugacies of class $C^{N-N_0}$ (i.e., $C^{2N_0+1}$) and obtain another sequence of maps, also denoted by $(\hat F_n(x_c,v))_{n\in\mathbb{Z}}$, having the form
\begin{equation*}
    \hat F_n (x_c,v):=\Big(w_n(x_c)
    +o(\|v\|^{N_0}),~A_n^{su}(x_c)v+O(\|v\|^2)\Big),
\end{equation*}
where $w_n(x_c)$ is $C^{N-N_0}$ and $A_n^{su}(x_c)$ is $C^{N-N_0-1}$. Moreover, for a $p\in \{2, \ldots,N_0\}$, assuming that
$$
\partial_v^i(\pi_{su}\hat F_n )(x_c,0)=0\quad{\rm for}~i=2,\ldots ,p-1,
$$
we can rewrite $\hat F_n$ as
\begin{equation*}
    \hat F_n (x_c,v)=\Big(w_n(x_c)
    +o(\|v\|^{N_0}),~A_n^{su}(x_c)v+g_n^p(x_c,v)+o(\|v\|^p)\Big),
\end{equation*}
where $g_n^p(x_c,v):=\partial_v^p (\pi_{su}\hat F_n )(x_c,0)v^p$ is a homogeneous polynomial in variable $v$ of order $p$ with coefficients being $C^{N-N_0-p}$ maps in $x_c$.
\begin{lm}\label{rem-cent-1}
The sequence $(\hat F_n)_{n\in\mathbb{Z}}$ of $C^{N-N_0-p}$ maps is conjugated to a sequence $(\check F_n)_{n\in\mathbb{Z}}$ of $C^{N-N_0-p-1}$ maps given by
$$
\check F_n(x_c,v):=\Big(w_n(x_c)+o(\|v\|^{N_0}),~A_n^{su}(x_c)v+o(\|v\|^p)\Big),
$$
via a sequence $(\hat H_n^p(x_c,v))_{n\in\mathbb{Z}}$, where $\hat H_n^p:X_c\times X_{su}\to X_{su}$ is a homogeneous polynomial in variable $v$ of order $p$ with coefficients being $C^{N-N_0-p-1}$ maps in $x_c$. Moreover,
\begin{align}\label{bounded-2}
\begin{split}
&(\hat H_n^p)^{\pm}(0)=0,\quad d_0(\hat H_n^p)^{\pm}=\Id, \quad \|d_{x}(\hat H_n^p)^{\pm}-\Id\|\le \tilde \delta,
\\
&\|d_{x}^i(\hat H_n^p)^{\pm}\|\le \tilde  M,\quad i=2,\ldots ,N-N_0-p,
\quad {\rm for}~ x=(x_c,v)~{\rm near}~0.
\end{split}
\end{align}
\end{lm}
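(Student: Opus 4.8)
The plan is to mirror the proof of Lemma~\ref{rem-cent}, with the coordinate change now acting on the hyperbolic variable $v$ and the multiplier $Dw_n(x_c)$ replaced by $A_n^{su}(x_c)$. First I would look for conjugacies of the form $\hat H_n^p\colon(x_c,v)\mapsto(x_c,\,v+\hat h_n^p(x_c,v))$, where $\hat h_n^p$ is a homogeneous polynomial of order $p$ in $v$ with coefficients a priori $C^{N-N_0-p}$ in $x_c$, with inverse $(\hat H_n^p)^{-1}\colon(x_c,v)\mapsto(x_c,\,v-\hat h_n^p(x_c,v)+o(\|v\|^p))$. Composing $\hat H_{n+1}^p\circ\hat F_n\circ(\hat H_n^p)^{-1}$ and collecting the order-$p$ terms in $v$ (using $p\ge 2$, so that all cross terms are $o(\|v\|^p)$, while the $\pi_c$-component stays $w_n(x_c)+o(\|v\|^{N_0})$ since $\hat H_n^p$ does not touch $x_c$), the order-$p$ term in the $su$-component is eliminated exactly when
\[
-A_n^{su}(x_c)\hat h_n^p(x_c,v)+g_n^p(x_c,v)+\hat h_{n+1}^p(w_n(x_c),A_n^{su}(x_c)v)=0 ;
\]
replacing $v$ by $A_n^{su}(x_c)^{-1}v$ turns this into the nonautonomous homological equation
\[
\hat h_{n+1}^p(w_n(x_c),v)=A_n^{su}(x_c)\hat h_n^p(x_c,A_n^{su}(x_c)^{-1}v)-g_n^p(x_c,A_n^{su}(x_c)^{-1}v),
\]
which is the $su$-analogue of~\eqref{elimi-equa-chan} and will again be solved via a suspension.

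Next I would set up the suspension. Let $\hat\Xi^p$ be the space of order-$p$ homogeneous polynomials in $v$ with values in $X_{su}$, decompose it according to the output block $X_j$ ($j=1,\dots,r$), and, using the strict inequalities in~\eqref{NR-cond-2} with $t=q$ and $|q|=p$, split $\hat\Xi^p=\hat\Xi^{p,+}\oplus\hat\Xi^{p,-}$, where $\hat\Xi^{p,+}$ is spanned by the monomials $C^q x_1^{q_1}\cdots x_r^{q_r}$ valued in $X_j$ with $a_1^{q_1}\cdots a_r^{q_r}\nu_-^{N}>b_j$ and $\hat\Xi^{p,-}$ by those with $b_1^{q_1}\cdots b_r^{q_r}\nu_+^{N}<a_j$. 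Writing $\hat L_n(x_c)\psi^p(v):=A_n^{su}(x_c)\psi^p(A_n^{su}(x_c)^{-1}v)$, estimate~\eqref{new-ET} shows that the cocycle $\hat{\mathcal L}$ generated by $\hat L_n(0)$ contracts forward on $\hat\Xi^{p,+}$ at rate $\hat\mu_1:=\max\{b_j/(a_1^{q_1}\cdots a_r^{q_r})\}<1$ and backward on $\hat\Xi^{p,-}$ at rate $\hat\mu_2:=\max\{(b_1^{q_1}\cdots b_r^{q_r})/a_j\}<1$; in particular the model equation $\hat L_n(0)\hat\kappa_n^p-\hat\kappa_{n+1}^p=g_n^p(0,A_n^{su}(0)^{-1}v)$ has a uniformly bounded solution $(\hat\kappa_n^p)$. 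The shifted suspension $\hat\Upsilon_n\colon(x_c,\psi^p)\mapsto\bigl(w_n(x_c),\,\hat L_n(x_c)(\psi^p+\hat\kappa_n^p)-g_n^p(x_c,A_n^{su}(x_c)^{-1}v)-\hat\kappa_{n+1}^p\bigr)$ then fixes $(0,0)$, and its derivative there is block lower triangular with diagonal blocks $A_n^c=Dw_n(0)$ and $\hat L_n(0)$, exactly as in~\eqref{block-low}; solving the homological equation amounts to producing a $(\hat\Upsilon_n)$-invariant manifold that is a graph over $X_c$.

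The decisive step, as in Lemma~\ref{rem-cent}, is the spectral analysis of the block-lower-triangular cocycle $(\hat\Delta_n)$: one checks it admits a strong exponential trichotomy with dichotomy spectrum contained in $(0,\hat\mu_1]\cup[\nu_-,\nu_+]\cup[\hat\mu_2^{-1},+\infty)$. The subspaces $\{0\}\times\hat\Xi^{p,\pm}$ are the stable and unstable parts by~\eqref{new-ET}, while the center part is the graph of the bounded sequence
\[
\hat\epsilon_n(x_c,v)=\sum_{i<n}\hat{\mathcal L}(n,i+1)\hat\Pi^+\hat M_i(v)\mathcal A_c(i,0)x_c-\sum_{i\ge n}\hat{\mathcal L}(n,i+1)\hat\Pi^-\hat M_i(v)\mathcal A_c(i,0)x_c,
\]
obtained from the Variation of Constants Formula, where $\hat M_n(v)$ is the off-diagonal block and $\hat\Pi^\pm$ the projections onto $\hat\Xi^{p,\pm}$. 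Convergence of this series requires precisely the gap inequalities $\hat\mu_1<\nu_-$, $\hat\mu_1<\nu_+$, $\hat\mu_2\nu_+<1$, $\hat\mu_2\nu_-<1$, which follow from~\eqref{NR-cond-2} because $\nu_-^{N}\le\nu_-$ and $\nu_+^{N}\ge\nu_+$ — this is exactly where the extra factor $[\nu_-,\nu_+]^{N}$ in~\eqref{NR-2} is genuinely used, and it becomes trivial when $\nu_-=\nu_+=1$. Since $N=3N_0+1$ gives $N-N_0-p-1<N$, the same inequalities also yield $\nu_+^{N-N_0-p-1}<\hat\mu_2^{-1}$ and $\nu_-^{N-N_0-p-1}>\hat\mu_1$, i.e.\ the spectral-gap hypothesis of Proposition~\ref{center} for $(\hat\Upsilon_n)$; hence, after a standard cut-off, $(\hat\Upsilon_n)$ possesses a $C^{N-N_0-p-1}$ center manifold with uniformly bounded derivatives near $(0,0)$. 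Undoing the shift $\hat\kappa_n^p$ identifies this center manifold with the graph of $(\hat h_n^p)$, so $\hat h_n^p$ is $C^{N-N_0-p-1}$ in $x_c$ with bounded derivatives, whence $(\check F_n)$ is $C^{N-N_0-p-1}$ and the estimates~\eqref{bounded-2} hold locally because $\hat H_n^p=\Id+(\text{order-}p\text{ polynomial in }v)$ is a near-identity map.

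I expect the main obstacle to be exactly the spectral analysis of the block-lower-triangular suspension $(\hat\Delta_n)$: verifying that the Variation of Constants series defining its center subspace converges, which forces the gap inequalities above and explains the presence of the additional term $[\nu_-,\nu_+]^{N}$ in the non-resonance conditions. Once this is in place, everything else is a faithful transcription of the proof of Lemma~\ref{rem-cent}, with $Dw_n(x_c)$ replaced by $A_n^{su}(x_c)$ and with the center interval $[\nu_-,\nu_+]$ in the relevant spectral sets replaced by the hyperbolic intervals $[a_j,b_j]$.
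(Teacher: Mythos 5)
Your proposal is correct and follows essentially the same route as the paper: the same near-identity coordinate change in the $v$-variable, the same homological equation, the same suspension on the space of order-$p$ polynomials, and the same spectral analysis of the resulting block-lower-triangular cocycle, with $Dw_n(x_c)$ replaced by $A_n^{su}(x_c)$ (or, after projecting onto $X_i$, by $A_n^i(x_c)$) and the non-resonance condition \eqref{NR-cond-2} used in place of \eqref{NR-cond-1}. The paper simply states this substitution and refers to the proof of Lemma~\ref{rem-cent}; you have written out the details in full, and they are consistent with the paper's argument.
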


\noindent {\it Proof of Lemma} \ref{rem-cent-1}.
Let $\hat h_n^p:X_c\times X_{su}\to X_{su}$, $n\in\mathbb Z$ be a sequence of homogeneous polynomials of variable $v$ of order $p$, and consider the transformations
\begin{equation*}
    \hat H_n^p:(x_c,v)\mapsto (x_c,v+\hat h_n^p(x_c,v)).
\end{equation*}
Similarly to the proof of Lemma \ref{rem-cent}, we calculate that
\begin{align*}
&   \hat H^p_{n+1}\circ \hat F_n\circ (\hat H_n^p)^{-1}(x_c,v)
   \\
   & = \bigg(w_n(x_c)+o(\|v\|^{N_0}), ~ A_n^{su}(x_c)v-A_n^{su}(x_c)\hat h_n^p(x_c,v) + g_n^p(x_c,v)+
   \\
   &\qquad \hat h_{n+1}^p(w_n(x_c),A_n^{su}(x_c)v)+o(\|v\|^p) \bigg).
\end{align*}
In order to eliminate the term of order $p$, we solve the following nonautonomous homological equation
\begin{equation}\label{elimi-equa-chan-1}
    - A_n^{su}(x_c)\hat h_n^p(x_c,A_n^{su}(x_c)^{-1}v) + g_n^p(x_c,A_n^{su}(x_c)^{-1}v)+ \hat h_{n+1}^p(w_n(x_c),v)=0
\end{equation}
for $n\in \mathbb Z$. Projecting \eqref{elimi-equa-chan-1} onto $X_i$, $i=1,\ldots ,r$, we get
\begin{equation}\label{elimi-equa-proj}
-A_n^{i}(x_c)\pi_i\hat h_n^p(x_c,A_n^{su}(x_c)^{-1}v) + \pi_ig_n^p(x_c,A_n^{su}(x_c)^{-1}v)+ \pi_i\hat h_{n+1}^p(w_n(x_c),v)=0.
\end{equation}
Comparing \eqref{elimi-equa-proj} with \eqref{elimi-equa-chan}, we understand that if we replace $Dw_n(x_c)$ with $A_n^{i}(x_c)$ and use \eqref{NR-cond-2} instead of \eqref{NR-cond-1}, then, similarly to the proof of Lemma \ref{rem-cent}, we can prove Lemma \ref{rem-cent-1}. \hfill $\Box$

Applying Lemma \ref{rem-cent-1} $N_0-1$ times, we eliminate all terms $g_n^p(x_c,v)$ for $p=2,\ldots ,N_0$ via $N_0-1$ conjugacies of class $C^{N-2N_0}$  (i.e., $C^{N_0+1}$) and obtain another sequence, still denoted by $(\check F_n(x_c,v))_{n\in\mathbb{Z}}$, having the form
\begin{equation*}
    \check F_n (x_c,v):=\Big(w_n(x_c)
    +o(\|v\|^{N_0}),~A_n^{su}(x_c)v+o(\|v\|^{N_0})\Big).
\end{equation*}
Now, we see that $(\check F_n)_{n\in\mathbb{Z}}$ and \eqref{Takens-NF} with $f_n^c(x_c):=w_n(x_c)-A_n^cx_c$
have the same derivatives up to order $N_0$ on their common center manifold $X_c$ and that, by using the smooth cut-off functions, \eqref{zeroo}-\eqref{deriv} hold for $(\check F_n)_{n\in\mathbb{Z}}$ and the sequence given by~\eqref{Takens-NF} (replacing $F_n$ and $G_n$, respectively).

Finally, using Theorem \ref{BS-thm-nonauto}, we can find a sequence $(H_n)_{n\in\mathbb{Z}}$ of $C^k$ maps, which satisfies \eqref{Deriv} and \eqref{1330}, locally conjugating
$(\check F_n)_{n\in\mathbb{Z}}$ to its Takens normal form \eqref{Takens-NF}. Hence, the sequence
$$
(\Psi_n)_{n\in\mathbb{Z}}:=(H_n\circ \hat H_n^{N_0}\circ \cdots\circ \hat H_n^2\circ
H_n^{N_0}\circ \cdots\circ H_n^1\circ \varphi_n)_{n\in\mathbb{Z}}
$$
is the desired $C^k$ conjugacy from the statement of Theorem \ref{Thm-Takens}, where $\varphi_n$ is given by \eqref{trans-center} and $H_n^p$, $\hat H_n^p$ are given by Lemmas \ref{rem-cent} and \ref{rem-cent-1}, respectively. Moreover, $(\Psi_n)_{n\in\mathbb{Z}}$ satisfies \eqref{bounded-psi} due to \eqref{bounded-0}, \eqref{bounded} and  \eqref{bounded-2}. This completes the proof of this theorem. \hfill $\Box$

\section{Appendix: The Belitskii-Samavol Theorem on Hilbert spaces}
\setcounter{equation}{0}


\noindent In what follows, $(X,\|\cdot\|)$ is a Hilbert space. Let $A:X\to X$ be an invertible and  bounded linear operator which admits a strong exponential trichotomy. In other words, we suppose that  there exist projections $\Pi^s,\Pi^c$ and $\Pi^u : X\to X$ as well as constants $\tilde K>0$, $0<\tilde \mu_-\le \tilde \mu_+ < \tilde \lambda_-\le 1\le \tilde \lambda_+< \tilde \varrho_- \le \tilde \varrho_+$ such that:
\begin{itemize}
    \item $\Pi^s+\Pi^c+\Pi^u={\rm Id}$;
    \item $A\Pi^*=\Pi^*A$ for $*=s,c,u$;
    \item for any $n\ge 0$,
    \begin{alignat}{3}
        \|A^n\Pi^s\|&\le \tilde K \tilde \mu_+^{n},\qquad &\|A^{-n}\Pi^s\|&\le \tilde K \tilde \mu_-^{-n},
        \label{contrac-part}
        \\
        \|A^{n}\Pi^c\|&\le  \tilde K \tilde \lambda_+^{n},\qquad &\|A^{-n}\Pi^c\|&\le  \tilde K \tilde \lambda_-^{-n},
        \label{center-part}
        \\
        \|A^n\Pi^u\|&\le  \tilde K \tilde \varrho_+^{n}, \qquad &\|A^{-n}\Pi^u\|&\le  \tilde K \tilde \varrho_-^{-n}.
        \label{expan-part}
    \end{alignat}
\end{itemize}
 Let $X_s:=\Pi^sX$, $X_c:=\Pi^cX$ and $X_u:=\Pi^uX$. Clearly, $X=X_s\oplus X_c\oplus X_u$. Let $N\in \mathbb N$ and   $\delta, M>0$, where we will assume in the sequel that $\delta$ is sufficiently small.
Assume that:
\begin{enumerate}[(H1)]
    \item  \label{hyp-F} $F=A+f : X\to X$ is a $C^N$ diffeomorphism, where $f(0)=0$, $d_0f=0$,  $\|d_xf\|\le \delta$ for all $x\in X$ and $\|d^i_xf\|\le M$ for  all $x\in X$  and  $2\le i \le N$;
     \item \label{hyp-G}  $G=A+g : X\to X$ is a $C^N$ diffeomorphism, where $g(0)=0$, $d_0g=0$,  $\|d_xg\|\le \delta$ for all $x\in X$ and $\|d^i_xg\|\le M$ for all $x\in X$  and  $2\le i \le N$;
     \item \label{spec-cond}  $\tilde \mu_+<\tilde \lambda_-^N$ and  $\tilde \lambda_+^N<\tilde \varrho_-$.
\end{enumerate}

\begin{thm}\label{ck-conjuga}
Take an arbitrary $k\in \mathbb N$. Then, there exists $N_0\in \mathbb N$ with the property that if $A$ admits a strong exponential trichotomy, hypotheses~(H\ref{hyp-F})-(H\ref{spec-cond}) hold with $N\ge N_0$,
and  $F$ and $G$ have the common center manifold $\Gamma^c$ such that $d^i_x(F-G)=0$ for all $1\le i\le N$ and all $x\in  \Gamma^c$, then   $F$ and $G$ are locally $C^k$ conjugated.
\end{thm}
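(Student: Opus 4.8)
The strategy is the classical Belitskii--Samovol ``path/homotopy method'' of Mather, adapted to the Hilbert space setting where smooth bump functions are available. Consider the family
\[
F_t := (1-t)F + tG = A + f_t, \qquad f_t := (1-t)f + tg, \quad t\in[0,1].
\]
Since $d^i_x(F-G)=0$ on $\Gamma^c$ for $1\le i\le N$, each $F_t$ has the \emph{same} derivatives up to order $N$ as $F$ along $\Gamma^c$; in particular, provided $\delta$ is small, $\Gamma^c$ is also a center manifold for each $F_t$ (its defining graph characterization via~\eqref{cm3}-type estimates is unchanged), and $F_t$ satisfies (H\ref{hyp-F}) with the same constants (up to a harmless factor). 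The goal is to construct, for each $t_0\in[0,1]$, a local $C^k$ conjugacy between $F_{t_0}$ and $F_{t}$ for $t$ near $t_0$; a compactness argument on $[0,1]$ then chains these together to conjugate $F=F_0$ to $G=F_1$.

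\textbf{Step 1: the homological (cohomological) equation.} To conjugate $F_{t_0}$ and $F_t$ one seeks a $t$-dependent family of local diffeomorphisms $H_{t}$ with $H_{t_0}=\mathrm{Id}$, $H_t(0)=0$, solving $H_t\circ F_{t_0} = F_t\circ H_t$. Differentiating in $t$ and writing $V_t := \frac{\partial}{\partial t}H_t\circ H_t^{-1}$ (a vector field), one is led to a linear equation of the form
\[
V_t\circ F_t - d_{\,\cdot}F_t\cdot V_t = -\frac{\partial F_t}{\partial t} = -(g-f),
\]
i.e.\ a nonautonomous-type cohomological equation over the cocycle generated by the linearization of $F_t$. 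The right-hand side $g-f$ and all the $V_t$ we produce must be \emph{$N$-flat on $\Gamma^c$}. This is the crucial point: because $g-f$ vanishes to order $N$ along $\Gamma^c$, and because of the spectral gap hypothesis (H\ref{spec-cond}) (which forces $\tilde\mu_+ < \tilde\lambda_-^N$ and $\tilde\lambda_+^N<\tilde\varrho_-$), the ``small denominators'' that would obstruct solving the equation on the hyperbolic directions are controlled: on the contracting block one sums forward, on the expanding block one sums backward, and the resulting geometric series converge precisely because the hyperbolic rates beat the $N$-th powers of the center rates. Concretely one decomposes $V_t = \Pi^s V_t + \Pi^c V_t + \Pi^u V_t$, writes the explicit Green's-function-type solution
\[
\Pi^s V_t(x) = \sum_{m\ge 0} (\text{derivative cocycle})^{-1}\,\Pi^s\,(g-f)\big(F_t^{m}(x)\big),
\]
and the symmetric backward sum for $\Pi^u V_t$, using the $N$-flatness to absorb the growth of $F_t^m(x)$ away from $\Gamma^c$; the center component is handled by a fixed-point / implicit-function argument since there the equation is invertible after restricting to $N$-flat vector fields (here one uses $1\notin\sigma$ on the hyperbolic part but the estimates are driven entirely by the gap). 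Multiplying by a $C^\infty$ bump function supported near $0$ (this is where the Hilbert space structure is essential) makes $V_t$ globally defined with bounded derivatives.

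\textbf{Step 2: integrating the vector field and closing the argument.} Once $V_t$ is constructed as a $C^k$-in-$x$, continuous-in-$t$ (in fact we only need measurability/continuity in $t$) vector field with $V_t(0)=0$ and bounded derivatives up to order $k$, one solves the nonautonomous ODE $\frac{d}{dt}H_t = V_t\circ H_t$, $H_{t_0}=\mathrm{Id}$; standard ODE theory on Banach spaces gives a $C^k$ family of local diffeomorphisms $H_t$ fixing $0$, and by construction $H_t$ conjugates $F_{t_0}$ to $F_t$ on a (possibly shrinking, but uniformly positive on a subinterval) neighborhood of $0$. A compactness/connectedness argument on $[0,1]$ — covering it by finitely many such intervals and composing the conjugacies — yields a single local $C^k$ conjugacy between $F_0=F$ and $F_1=G$. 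The value $N_0$ is chosen so that $N_0$ exceeds the number of derivatives lost at each stage of solving the cohomological equation plus $k$; this is exactly the mechanism by which $k\to\infty$ as $N\to\infty$ in Sternberg/Takens-type theorems.

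\textbf{Main obstacle.} The heart of the matter — and the step I expect to be most delicate — is solving the cohomological equation in Step 1 \emph{with control of derivatives up to order $k$ and uniformly in $t$}, when the dynamics is only partially hyperbolic: the center manifold $\Gamma^c$ is nontrivial, the linearization varies along orbits, and the infinite sums defining $\Pi^s V_t,\Pi^u V_t$ must be shown to converge \emph{together with all their derivatives}. The $N$-flatness of $g-f$ on $\Gamma^c$ combined with the gap condition (H\ref{spec-cond}) is what saves the day, but verifying the termwise estimates on the derivatives (which involve products of derivatives of $F_t^m$, growing polynomially times the hyperbolic factors) requires careful bookkeeping of how many derivatives the flatness can absorb — this is precisely where the quantitative relation between $N_0$ and $k$ is forced.
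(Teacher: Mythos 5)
Your overall strategy (Mather's homotopy method, a cohomological equation, one-sided sums whose convergence is controlled by $N$-flatness, then integrating the time-$\tau$ vector field) matches the paper's, but there is a genuine gap in Step~1 that the paper works hard to avoid. You propose to decompose the \emph{solution} $V_t=\Pi^sV_t+\Pi^cV_t+\Pi^uV_t$ and solve each block by a one-sided sum, e.g.\ $\Pi^s V_t(x)=\sum_{m\ge 0}(\text{derivative cocycle})^{-1}\Pi^s(g-f)(F_t^m(x))$. This does not work for two reasons. First, the cohomological equation $V_t\circ F_t-d_\cdot F_t\cdot V_t=-(g-f)$ does not decouple under the fixed projections $\Pi^s,\Pi^c,\Pi^u$: away from the origin (and away from the center manifold) the derivative $d_xF_t$ is not block-diagonal with respect to $X_s\oplus X_c\oplus X_u$, so $\Pi^s$ does not commute with the derivative cocycle and the displayed ``Green's-function'' formula for $\Pi^sV_t$ is not a solution of the projected equation. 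Second, and more fundamentally, even the unprojected forward sum would diverge: $g-f$ is only $N$-flat on $\Gamma^c$, i.e.\ small when \emph{both} $x_s$ and $x_u$ are small, whereas forward iterates $F_t^m(x)$ of a generic $x$ escape along the unstable direction, so $\operatorname{dist}(F_t^m(x),\Gamma^c)$ grows and the $N$-flatness of $g-f$ gives no decay of $(g-f)(F_t^m(x))$. Your phrase ``using the $N$-flatness to absorb the growth of $F_t^m(x)$ away from $\Gamma^c$'' is exactly where the argument fails.

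The paper's proof contains precisely the extra ideas needed to close this gap, and they should appear in any correct argument. First, it straightens the center-stable and center-unstable manifolds of $F$ (transformation~\eqref{new-axes}), so that $\Gamma^{sc}=X_s\oplus X_c$ and $\Gamma^{uc}=X_c\oplus X_u$ in the new coordinates. Second, Lemma~\ref{lm-decomp} uses a Taylor expansion in $\tilde x_u$ of order $[N/2]$ to split the \emph{right-hand side} $\widetilde R=\tilde f-\tilde g$ as $\widetilde R_1+\widetilde R_2$, with $\widetilde R_1$ flat of order $[N/2]$ in the $\tilde x_s$ direction alone and $\widetilde R_2$ flat of order $[N/2]$ in the $\tilde x_u$ direction alone. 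Third, it runs \emph{two} homotopies (first inserting $\hat R_1$, then $\hat R_2$), each with a single one-sided series solution~\eqref{solution-homo-equa}; the forward series for $\hat R_1$ converges because the stable-foliation estimate~\eqref{stab-folia} shows $\Pi^s\hat G_\tau^n(x)$ decays at rate $(\gamma+\varepsilon)^n$, and $\hat R_1$ being flat of order $[N/2]$ in $x_s$ alone then makes $\hat R_1\circ\hat G_\tau^{n-1}(x)$ decay fast enough to beat the $L^n$ derivative growth of Lemma~\ref{lm-bound-G}; the backward series for $\hat R_2$ is symmetric. The point is that the decomposition must act on the inhomogeneity, not on the unknown vector field, and $N$-flatness on the codimension-two set $\Gamma^c$ alone is not enough — it must first be upgraded, via coordinate straightening and a Taylor split, to flatness in a single hyperbolic coordinate for each summand.
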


This theorem will be proved using the so-called homotopy method. Our  arguments are inspired by the proof of~\cite[Theorem 3.2]{LL-DCDS16}. In accordance with the hypotheses (H\ref{hyp-F}) and (H\ref{spec-cond}), $F$ has the $C^N$ center-stable manifold
\begin{equation}\label{cs-manifold}
    \Gamma^{sc}= \{ (x_s,x_c)+h^{sc}(x_s,x_c) | (x_s,x_c)\in X_s\oplus X_c\},
\end{equation}
and the $C^N$ center-unstable manifold
\begin{equation}\label{cu-manifold}
    \Gamma^{uc}= \{ (x_u,x_c)+h^{uc}(x_u,x_c) | (x_u,x_c)\in X_u\oplus X_c\},
\end{equation}
where  $h^{sc}: X_s\oplus X_c\to X_u$ and $h^{uc}: X_u\oplus X_c\to X_s$ both are  $C^N$.
The intersection of $\Gamma^{sc}$ and $\Gamma^{uc}$ is the  center manifold of $F$.
We now use the center-stable manifold, center-unstable manifold and center manifold to introduce new axes. Let
\begin{align}\label{new-axes}
    \tilde x_s &:= x_s - h^{uc} (x_u,x_c),\notag\\
    \tilde x_c &:=x_c,\\
    \tilde x_u &:= x_u- h^{sc} (x_s,x_c).\notag
\end{align}
By \cite[Theorem 3.1]{CHT}, we see that Lipschitz constants Lip$(h^{uc})$ and Lip$(h^{sc})$ are both small if Lip$(f) \le \delta$ is small.
Thus, the transformation \eqref{new-axes} is invertible on $X$ with Lipschitz inverse by the  Lipschitz inverse function theorem. Furthermore, it is a  $C^N$ diffeomorphism  on $X$ since $h^{sc}$ and $h^{uc}$ are both $C^N$.

Using the transformation \eqref{new-axes}, we have that  $F$ is  $C^N$ conjugated to a $C^N$ diffeomorphism $\widetilde F$ of the form
\begin{equation*}
    \widetilde F( \tilde x_s, \tilde x_c, \tilde x_u) = (A_s\tilde x_s, A_c\tilde x_c, A_u\tilde x_u)+ \tilde f ( \tilde x_s, \tilde x_c, \tilde x_u),
\end{equation*}
where $A_*:=A|_{X_*}$ for $*=s,c,u$, $\tilde f( \tilde x_s, \tilde x_c, \tilde x_u)= O(\|( \tilde x_s, \tilde x_c, \tilde x_u)\|^2)$, $\Pi^s \tilde f( 0, \tilde x_c, \tilde x_u)=0$ for $(\tilde x_c, \tilde x_u)\in X_c\oplus X_u$, and
$\Pi^u \tilde f ( \tilde x_s, \tilde x_c, 0)=0$ for $( \tilde x_s, \tilde x_c)\in X_s\oplus X_c$.

Similarly, $G$ is also $C^N$ conjugated to a $C^N$ diffeomorphism $\tilde G$ having the form
\begin{equation*}
    \widetilde G( \tilde x_s, \tilde x_c, \tilde x_u) = (A_s\tilde x_s, A_c\tilde x_c, A_u\tilde x_u)+ \tilde g ( \tilde x_s, \tilde x_c, \tilde x_u),
\end{equation*}
where $\tilde g( \tilde x_s, \tilde x_c, \tilde x_u)= O(\|( \tilde x_s, \tilde x_c, \tilde x_u)\|^2)$, $\Pi^s \tilde g ( 0, \tilde x_c, \tilde x_u)=0$ for $(\tilde x_c, \tilde x_u)\in X_c\oplus X_u$, and
$\Pi^u \tilde g ( \tilde x_s, \tilde x_c, 0)=0$ for $( \tilde x_s, \tilde x_c)\in X_s\oplus X_c$.


Let $\widetilde R:=\tilde f-\tilde g$. Since $d^i_x(F-G)=0$ for  $0\le i\le N$ and  $x\in \Gamma^c$, we have that $d^i_{\tilde x_c}\widetilde R=0$ for $0\le i\le N$ and  $\tilde x_c\in X_c$. We split $\widetilde R$ into two parts, one of which is  flat on $X_s\oplus X_c$ while the other is flat on $X_c\oplus X_u$.
\begin{lm}\label{lm-decomp}
The function $\widetilde R$ admits a decomposition, $\widetilde R=\widetilde R_1+\widetilde R_2$, where for $0\le i\le [N/2]$ we have that
\begin{align}
&d^i_{(0,\tilde x_c,\tilde x_u)}\widetilde R_1 =0 \qquad\text{for all}~~(\tilde x_c, \tilde x_u)\in X_c\oplus X_u,
    \label{suc}
    \\
    &d^i_{(\tilde x_s,\tilde x_c,0)}\widetilde R_2 =0 \qquad \text{for all}~~( \tilde x_s, \tilde x_c)\in X_s\oplus X_c.
    \label{ssc}
\end{align}
\end{lm}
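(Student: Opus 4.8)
The plan is to read off the decomposition directly from the Taylor expansion of $\widetilde R$ in the stable variable alone. Write $x=(\tilde x_s,\tilde x_c,\tilde x_u)$ and put $p:=[N/2]$. For $0\le j\le p$ let $D_s^j\widetilde R(x)$ denote the $j$-th derivative of $\widetilde R$ restricted to directions in $X_s$, a bounded symmetric $j$-linear map $X_s^j\to X$, set $c_j(\tilde x_c,\tilde x_u):=D_s^j\widetilde R(0,\tilde x_c,\tilde x_u)$, and define
\[
\widetilde R_2(\tilde x_s,\tilde x_c,\tilde x_u):=\sum_{j=0}^{p}\frac1{j!}\,c_j(\tilde x_c,\tilde x_u)(\tilde x_s,\dots,\tilde x_s),\qquad \widetilde R_1:=\widetilde R-\widetilde R_2 .
\]
Thus $\widetilde R_2$ is the degree-$p$ Taylor polynomial of $\widetilde R$ in $\tilde x_s$, with coefficients $c_j$ depending smoothly on $(\tilde x_c,\tilde x_u)$, and $\widetilde R_1$ is the associated Taylor remainder, which in particular satisfies $\widetilde R_1=O(\|\tilde x_s\|^{p+1})$. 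Since each $c_j$ is of class $C^{N-j}$ and $j\le p$, both $\widetilde R_1$ and $\widetilde R_2$ are of class $C^{N-p}$, in particular $C^{[N/2]}$, so the derivative conditions \eqref{suc} and \eqref{ssc} make sense. Throughout we use the hypothesis (established just above the statement) that $d^i_y\widetilde R=0$ for every $y\in X_c$ and $0\le i\le N$, i.e. every mixed partial derivative of $\widetilde R$ of total order $\le N$ vanishes at each point of the form $(0,\tilde x_c,0)$.

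For \eqref{suc}: by construction the partial derivatives of $\widetilde R_2$ and of $\widetilde R$ in the directions of $X_s$ agree up to order $p$ at $\tilde x_s=0$ (this is precisely what the Taylor polynomial reproduces, its coefficients being carried along as functions of $(\tilde x_c,\tilde x_u)$). Hence every mixed partial derivative of $\widetilde R_1=\widetilde R-\widetilde R_2$ with at most $p$ differentiations in $X_s$-directions vanishes at $\tilde x_s=0$. Since a total-order-$i$ derivative of $\widetilde R_1$ at a point $(0,\tilde x_c,\tilde x_u)$ contains at most $i$ differentiations in $X_s$-directions, it vanishes whenever $i\le p=[N/2]$, which is \eqref{suc}.

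For \eqref{ssc}: first, the coefficients inherit flatness along $X_c$, because a derivative of $c_j$ at $(\tilde x_c,0)$ in $X_c\oplus X_u$-directions of order $b$ equals a derivative of $\widetilde R$ at $(0,\tilde x_c,0)$ of total order $j+b$ (using symmetry of higher derivatives), hence vanishes as soon as $b\le N-j$. Now differentiate $\widetilde R_2=\sum_{j=0}^{p}\frac1{j!}c_j(\tilde x_c,\tilde x_u)(\tilde x_s,\dots,\tilde x_s)$ at a point $(\tilde x_s,\tilde x_c,0)$: by the Leibniz rule each term of $d^i\widetilde R_2$ arises by letting $a$ of the $i$ differentiations act on a monomial factor $(\tilde x_s,\dots,\tilde x_s)$ (which only involves the $X_s$-components of the directions, and is nonzero only if $a\le j$) and the remaining $i-a$ act on the coefficient $c_j$ (involving only the $X_c\oplus X_u$-components). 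Such a term vanishes at $\tilde x_u=0$ unless $i-a>N-j$, i.e. unless $i+j-a>N$; but $j\le p=[N/2]$ and $a\ge0$, so $i+j-a\le i+[N/2]$, and therefore for $i\le[N/2]$ we obtain $i+j-a\le 2[N/2]\le N$ and every term vanishes. This gives \eqref{ssc}.

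The only delicate point is the arithmetic of flatness orders: the order-$N$ flatness of $\widetilde R$ along $X_c$ has to be split between the two prescribed order-$[N/2]$ flatness requirements, and the choice $p=[N/2]$ works precisely because $[N/2]+[N/2]\le N$ (with equality when $N$ is even). This is exactly why the statement is phrased in terms of $[N/2]$ rather than $N$; everything else is routine multilinear Taylor calculus on the Hilbert space $X$.
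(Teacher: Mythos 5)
Your proof is correct, but it is the mirror image of the paper's. The paper Taylor-expands $\widetilde R$ in the \emph{unstable} variable $\tilde x_u$, taking $\widetilde R_1$ to be the polynomial part of degree $[N/2]$ (whose flatness at $\tilde x_s=0$ must then be checked from the flatness of $\widetilde R$ along $X_c$) and $\widetilde R_2$ to be the Taylor remainder, which is $o(\|\tilde x_u\|^{[N/2]})$ and so satisfies \eqref{ssc} for free. You do the exact dual: Taylor-expand in the \emph{stable} variable $\tilde x_s$, let $\widetilde R_2$ be the polynomial part (checking \eqref{ssc} from the flatness of $\widetilde R$ along $X_c$), and let $\widetilde R_1$ be the remainder, which is $O(\|\tilde x_s\|^{[N/2]+1})$ and satisfies \eqref{suc} for free. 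The two constructions produce different pairs $(\widetilde R_1,\widetilde R_2)$ in general, but either serves the lemma; in both cases the decisive arithmetic is $[N/2]+[N/2]\le N$, which allows the single order-$N$ flatness along $X_c$ to be split between the two order-$[N/2]$ requirements. One small advantage of your writeup is that you make fully explicit the Leibniz-rule/order-counting step for the polynomial part, which the paper's proof leaves largely to the reader.
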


\begin{proof}
We
consider the Taylor expansion of $\widetilde R$ at $0$ with respect to $\tilde x_u$,
\begin{equation}\label{splitting}
    \widetilde R(\tilde x)=\sum_{i=0}^{[N/2]} \frac{1}{i!} \frac{\partial^i\widetilde R}{\partial \tilde x_u^i}(\tilde x_s, \tilde x_c, 0)\tilde x_u^i +o(\|\tilde x_u\|^{[N/2]}).
\end{equation}
Take partial derivatives of $\frac{\partial^i\widetilde R}{\partial \tilde x_u^i}(\cdot , \cdot , 0) (\cdot )^i $ at $(0,\tilde x_c,\tilde x_u)$ for all $0\le i\le [N/2]$ and all $(\tilde x_c,\tilde x_u)\in X_c\oplus X_u$.
Observe that
\[ \frac{\partial^{i_1+i_2}}{\partial \tilde x_s^{i_1}\partial \tilde x_c^{i_2}}\left(\frac{\partial^i\widetilde R}{\partial \tilde x_u^i}(0, \tilde x_c, 0)\right) = \frac{\partial^{i_1+i_2+i}\widetilde R}{\partial \tilde x_s^{i_1}\partial \tilde x_c^{i_2}\partial \tilde x_u^i}(0,\tilde x_c,0)=0,\]
for $i_1+i_2\le [N/2]$
since $d^i_{(0,\tilde x_c,0)}\widetilde R=0$ for all $0\le i\le N$ and all ${\tilde x_c}\in X_c$.
Thus, $\widetilde R_1$  chosen to be
\begin{equation*}
 \widetilde R_1(\tilde x):=  \sum_{i=0}^{[N/2]} \frac{1}{i!} \frac{\partial^i\widetilde R}{\partial \tilde x_u^i}(\tilde x_s, \tilde x_c, 0)\tilde x_u^i
\end{equation*}
satisfies \eqref{suc}.
Let
$$\widetilde R_2(\tilde x):=\widetilde R(\tilde x) -\sum_{i=0}^{[N/2]} \frac{1}{i!} \frac{\partial^i\widetilde R}{\partial \tilde x_u^i}(\tilde x_s, \tilde x_c, 0)\tilde x_u^i.$$
We see from \eqref{splitting} that $\widetilde R_2(\tilde x)=o(\|\tilde x_u\|^{[N/2]})$, which implies that \eqref{ssc} holds. The proof is completed.
\end{proof}

For simplicity, we will write $x$ instead of $\tilde x$. Since $X$ is a Hilbert space, we can modify $\tilde f$, $\tilde g$, $\widetilde R_1$ and $\widetilde R_2$ and obtain $C^N$ maps $\hat f, \hat g, \hat R_1$ and $\hat R_2$ defined on $X$ such that
$\hat f(x) =\tilde f(x)$, $\hat g(x) =\tilde g(x)$ and $\hat R_i(x) =\widetilde R_i(x)$ for $i=1,2$ and  $\|x\|\le \frac r 2$. Moreover, $\hat f(x) = \hat g(x) = \hat R_i(x) =0$ for $i=1,2$ and $\|x\|\ge r$.

Let $\hat G_\tau(x) := Ax+\hat  g(x)+\tau \hat R_1(x)$ for $0\le \tau \le 1$ and $x\in X$. We define a map
$
\overline G: X\times [0,1] \to X\times [0,1]
$
by
\[ \overline G (x,\tau) := (\hat G_\tau(x), \tau), \qquad\text{for}~ (x,\tau)\in X\times [0,1]. \]

\begin{lm}\label{lm-homot-method}
Suppose that the following homotopy equation
\begin{equation}\label{homotopy}
    \frac{\partial\hat G_\tau}{\partial x} h - h \circ\overline G = -\hat R_1,
\end{equation}
where $h:X\times [0,1]\to X$ is an unknown map, has a $C^k$ solution with the property that there exists $\hat L>0$ such that
\begin{equation}\label{exist-01}
    \| h(x,\tau)\| \le \hat L \|x\|^2 \qquad \text{for every $\tau \in [0,1] $ and  $\|x\|\le 1$.}
\end{equation}
Then, $\hat G_0$ and $\hat G_1$ are $C^k$ locally conjugated.
\end{lm}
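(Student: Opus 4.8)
The plan is to run the classical homotopy (deformation) method: a $C^k$ solution $h$ of the homological-type equation \eqref{homotopy} should be read as a non-autonomous vector field whose flow conjugates $\hat G_0$ to $\hat G_1$. Writing \eqref{homotopy} out coordinate-free, it says $d_x\hat G_\tau(x)\,h(x,\tau)-h(\hat G_\tau(x),\tau)=-\hat R_1(x)$ for $(x,\tau)\in X\times[0,1]$. I would then consider the initial value problem $\partial_\tau\eta_\tau(x)=h(\eta_\tau(x),\tau)$ with $\eta_0=\Id$, and take the desired local conjugacy to be $\eta:=\eta_1$.

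First I would check that $\eta_\tau$ is well defined and well behaved near the origin. Since \eqref{exist-01} gives $\|h(x,\tau)\|\le\hat L\|x\|^2$ for $\|x\|\le1$, we get $h(0,\tau)=0$, and (because a linear principal part would contradict the quadratic bound) also $d_xh(0,\tau)=0$. A Gronwall comparison of $\|\eta_\tau(x)\|$ with the solution of $\dot u=\hat L u^2$ shows that there is a fixed $\rho_0>0$ such that for $\|x\|\le\rho_0$ the solution $\eta_\tau(x)$ exists for all $\tau\in[0,1]$ and stays in $\{\|y\|\le1\}$; smooth dependence on initial data then makes $(x,\tau)\mapsto\eta_\tau(x)$ of class $C^k$ in $x$, with $\eta_\tau(0)=0$. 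Differentiating the ODE at $x=0$ and using $d_xh(0,\tau)=0$ yields $\partial_\tau d_x\eta_\tau(0)=0$, hence $d_x\eta_\tau(0)=d_x\eta_0(0)=\Id$, so by the inverse function theorem each $\eta_\tau$ (in particular $\eta_1$) is a local $C^k$ diffeomorphism near $0$ fixing $0$.

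Next I would verify the conjugacy along the whole deformation. Set $W_\tau(x):=\hat G_\tau(\eta_\tau(x))-\eta_\tau(\hat G_0(x))$, for $\|x\|$ small enough that $\hat G_0(x)$ still lies in the domain of the flow (possible because $\hat G_0(0)=0$). Then $W_0\equiv0$, and differentiating in $\tau$ using $\partial_\tau\hat G_\tau=\hat R_1$ together with \eqref{homotopy} evaluated at the point $\eta_\tau(x)$ cancels the $\hat R_1$ contributions and gives $\partial_\tau W_\tau(x)=h\big(\eta_\tau(\hat G_0(x))+W_\tau(x),\tau\big)-h\big(\eta_\tau(\hat G_0(x)),\tau\big)$. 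This is a non-autonomous ODE in $W_\tau(x)$ whose right-hand side vanishes at $W_\tau(x)=0$ and is Lipschitz in $W_\tau(x)$ (as $h\in C^1$), so by uniqueness of solutions $W_\tau\equiv W_0=0$, i.e.\ $\hat G_\tau\circ\eta_\tau=\eta_\tau\circ\hat G_0$ for every $\tau\in[0,1]$ near $0$. Taking $\tau=1$ gives $\hat G_1\circ\eta=\eta\circ\hat G_0$ near $0$ with $\eta$ a local $C^k$ diffeomorphism, which is exactly the asserted local $C^k$ conjugacy.

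The step I expect to be the main obstacle is not the (essentially formal) cancellation above but the domain bookkeeping: one must ensure the flow $\eta_\tau$ exists on one and the same neighborhood of $0$ for the entire compact interval $[0,1]$ and stays invertible there, and then shrink neighborhoods so that all the compositions $\hat G_0(x)$, $\eta_\tau(\hat G_0(x))$, $\hat G_\tau(\eta_\tau(x))$ are simultaneously defined. This is precisely where the quadratic estimate \eqref{exist-01} is indispensable — without it a trajectory could blow up before $\tau=1$, or $d_x\eta_\tau(0)$ could fail to equal $\Id$, and the construction would collapse.
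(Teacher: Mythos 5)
Your proof is correct and takes essentially the same approach as the paper's: both interpret $h$ as the vector field of a homotopy, take its time-one flow as the conjugacy $H$, and verify the conjugacy identity via ODE uniqueness. The paper packages this through the autonomous suspended flow $\mathcal F_v^t$ of $v=(h,1)$ on $X\times[0,1]$ and shows it commutes with $\overline G$, whereas you work with the equivalent non-autonomous flow $\eta_\tau$ on $X$ and directly track the discrepancy $W_\tau:=\hat G_\tau\circ\eta_\tau-\eta_\tau\circ\hat G_0$; these are the same argument in two equivalent formulations, and your observations about $h(0,\tau)=0$, $d_xh(0,\tau)=0$, $d_x\eta_\tau(0)=\Id$ and Gronwall-type existence on a uniform neighborhood match the (more terse) corresponding statements in the paper.
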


\begin{proof}
By \eqref{homotopy}, we have
\begin{equation*}
    D\overline G \begin{pmatrix} h \\
    1
    \end{pmatrix} = \begin{pmatrix}\frac{\partial \hat G_\tau}{\partial x}& \hat R_1\\
    0&1\end{pmatrix} \begin{pmatrix} h \\
    1
    \end{pmatrix} = \begin{pmatrix} \frac{\partial \hat  G_\tau}{\partial x}h +\hat R_1\\
    1
    \end{pmatrix}=\begin{pmatrix} h \circ \overline G\\
    1
    \end{pmatrix}= \begin{pmatrix} h\\
    1
    \end{pmatrix} \circ \overline G.
\end{equation*}
Set $v:=\begin{pmatrix} h\\
    1
    \end{pmatrix}$ and let  ${\mathcal F}_v^t$ be the flow generated by the vector field $v$. Then, for any $(x_0,\tau_0)\in X\times [0,1]$,  it follows that
\begin{align}
    \frac{d(\overline G ({\mathcal F}_v^t(x_0,\tau_0)) )}{dt} &=     D\overline G ({\mathcal F}_v^t(x_0,\tau_0)) \frac{d({\mathcal F}_v^t(x_0,\tau_0))}{dt}
    \notag\\
    &=   D\overline G ({\mathcal F}_v^t(x_0,\tau_0)) \begin{pmatrix} h({\mathcal F}_v^t(x_0,\tau_0))\\
    1
    \end{pmatrix}\notag
    \\
   &= \begin{pmatrix} h\\
    1
    \end{pmatrix}\circ \overline G ({\mathcal F}_v^t(x_0,\tau_0)),
\end{align}
which means that $\overline G ({\mathcal F}_v^t(x_0,\tau_0))$ is a solution of the ODE that induces the flow ${\mathcal F}_v^t$. This yields that
$$
\overline G ({\mathcal F}_v^t(x_0,\tau_0)) = {\mathcal F}_v^t (\overline G ({\mathcal F}_v^0(x_0,\tau_0)) ) = {\mathcal F}_v^t (\overline G (x_0,\tau_0) ).
$$
In other words, $\overline G$ commutes with ${\mathcal F}_v^t$. Due to \eqref{exist-01}, ${\mathcal F}_v^t(x,0)$ exists on $[0,1]$ for $\|x\|\le \tilde r$ with $\tilde r>0$ being a constant.
Letting ${\mathcal F}^1_v(x,0)=(H(x),1)$ for $\|x\|\le \tilde r$, we derive  that
\begin{align*}
((H\circ \hat   G_0)(x),1) &= {\mathcal F}^1_v (\hat   G_0(x),0) ={\mathcal F}^1_v \circ \overline G(x,0)
\\
&=\overline G \circ {\mathcal F}^1_v (x,0)  = \overline G (H(x),1)= ((\hat   G_1\circ H)(x),1 ).
\end{align*}
Notice that, since $h$ is of class $C^k$,  $H$ is a local $C^k$ diffeomorphism near $0$. This shows that $\hat  G_0$ and $\hat  G_1$ are $C^k$ locally conjugated.
The proof of the lemma is completed.
\end{proof}

\begin{lm}\label{lm-solut-homot}
The equation \eqref{homotopy} has a $C^k$ solution satisfying \eqref{exist-01} for any fixed $k\in\mathbb N$ provided that \eqref{suc} holds for a sufficiently large $N$.
\end{lm}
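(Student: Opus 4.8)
The plan is to produce $h$ as an explicit series obtained by summing the forcing $-\hat R_1$ along the forward orbits of $\hat G_\tau$, weighted by the inverse of the derivative cocycle, and to show that the high-order flatness \eqref{suc}, together with exponential contraction of the $X_s$-component along those orbits, makes this series --- and its derivatives up to order $k$ --- converge once $N$ is large.

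First I would collect the dynamical ingredients. After the cut-off, $\hat g$ and $\hat R_1$ are $C^N$ on $X$ with Lipschitz constant at most $\delta$ and vanish for $\|x\|\ge r$, so each $\hat G_\tau=A+\hat g+\tau\hat R_1$ ($\tau\in[0,1]$) is a global $C^N$ diffeomorphism that is an $O(\delta)$-perturbation of $A$; hence, in a norm adapted to the trichotomy of $A$, its derivative cocycle satisfies $\|[D\hat G_\tau^{n+1}(x)]^{-1}\|\le C\hat\nu^{-n}$ with $\hat\nu$ slightly below $\tilde\mu_-$, while $\|d_x^{j}\hat G_\tau^{n}\|$ is bounded by $\hat\varrho^{jn}$ up to polynomial-in-$n$ factors, with $\hat\varrho$ slightly above $\tilde\varrho_+$. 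Taking $i=0$ in \eqref{suc} gives $\hat R_1\equiv 0$ on $X_c\oplus X_u$, so $\hat G_\tau$ agrees there with $\hat G_0=A+\hat g$, whose center--unstable manifold has been straightened to $X_c\oplus X_u$; therefore $X_c\oplus X_u$ is $\hat G_\tau$-invariant for every $\tau$, which forces $\|\Pi^s(\hat g+\tau\hat R_1)(z)\|\le 2\delta\|\Pi^s z\|$, and iterating $\|\Pi^s\hat G_\tau(z)\|\le\|A_s\Pi^s z\|+2\delta\|\Pi^s z\|$ via a discrete Gronwall argument yields the key contraction
\[
\big\|\Pi^s\hat G_\tau^{n}(z)\big\|\le C\hat\mu^{n}\|\Pi^s z\|,\qquad n\ge 0,\ z\in X,
\]
for some $\hat\mu\in(\tilde\mu_+,1)$, uniformly in $\tau$. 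Finally, \eqref{suc} for $0\le i\le[N/2]$ translates into $\|d_z^{j}\hat R_1\|\le C\|\Pi^s z\|^{[N/2]+1-j}$ for $0\le j\le[N/2]$.

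Next I would set
\[
h(x,\tau):=-\sum_{n=0}^{\infty}\big[D\hat G_\tau^{n+1}(x)\big]^{-1}\,\hat R_1\big(\hat G_\tau^{n}(x)\big),
\]
and verify, using $D\hat G_\tau^{n+1}(x)=D\hat G_\tau^{n}(\hat G_\tau x)\,D\hat G_\tau(x)$ and $\hat G_\tau^{n}(x)=\hat G_\tau^{n-1}(\hat G_\tau x)$ for $n\ge 1$, that it solves $D_x\hat G_\tau(x)\,h(x,\tau)-h(\hat G_\tau(x),\tau)=-\hat R_1(x)$; since $\overline G$ leaves $\tau$ unchanged, this is exactly \eqref{homotopy}. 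The heart of the matter is convergence. By the estimates above the $n$-th summand has size $\hat\nu^{-n}\,\hat\mu^{([N/2]+1)n}\,\|\Pi^s x\|^{[N/2]+1}$ up to constants, and since $\hat G_\tau^{n}(x)$ is jointly $C^N$ in $(x,\tau)$, the higher-order chain rule shows that $k$ differentiations lower the exponent $[N/2]+1$ to at most $[N/2]+1-k$ and insert factors bounded by $\hat\varrho^{kn}$ up to polynomial-in-$n$ factors; hence the $n$-th summand of $d_x^{k}h$ has size $\hat\nu^{-n}\,\hat\varrho^{kn}\,\hat\mu^{([N/2]+1-k)n}$ times a polynomial in $n$. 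Because $\hat\mu<1$, choosing $N$ large enough (depending on $k$ and the trichotomy constants) makes $\hat\nu^{-1}\hat\varrho^{k}\hat\mu^{[N/2]+1-k}<1$, so all these series converge uniformly on bounded sets; thus $h$ is $C^k$ jointly in $(x,\tau)$, term-by-term differentiation is legitimate, and the formal identity above is a genuine solution of \eqref{homotopy}. Finally, the $n=0$ term alone gives $\|h(x,\tau)\|\le C\|\Pi^s x\|^{[N/2]+1}\le\hat L\|x\|^2$ for $\|x\|\le 1$, which is \eqref{exist-01}, so Lemma~\ref{lm-homot-method} applies. I expect the main obstacle to be precisely this bookkeeping: keeping the exponential decay coming from the flatness of $\hat R_1$ on $X_c\oplus X_u$ (and the contraction of $\Pi^s$ along forward orbits) strictly ahead of the exponential growth produced by the inverse cocycle and by the derivatives of the iterates --- which is why \eqref{suc} is imposed to order $[N/2]$ and why $N$ must be taken large.
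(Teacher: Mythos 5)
Your proposal is essentially the paper's argument: the same explicit series $h(x,\tau)=-\sum_{n}[D\hat G_\tau^{n+1}(x)]^{-1}\hat R_1(\hat G_\tau^n(x))$, the same telescoping verification that it solves \eqref{homotopy}, and the same convergence mechanism — the $[N/2]$-flatness of $\hat R_1$ in the $x_s$-direction (translated via Taylor's theorem into $\|d^j_x\hat R_1\|\lesssim\|\Pi^s x\|^{[N/2]+1-j}$), the exponential contraction of $\Pi^s$ along forward orbits, and an exponential-in-$n$ bound on the derivative cocycle, with $N$ chosen large enough that the contraction dominates. In the paper this is exactly Lemmas~\ref{lm-series} and \ref{lm-solut}, with Lemma~\ref{lm-bound-G} playing the role of your cocycle estimate.

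The one genuine point of difference is how you obtain the stable-direction contraction estimate \eqref{stab-folia}. The paper reaches it indirectly through the invariant foliation theory of Chen--Hale--Tan: it invokes \cite[Theorem 3.1(vi), Lemma 3.3]{CHT} to place every $x$ on a stable fiber over some $\xi\in X_c\oplus X_u$, sets up the Lyapunov--Perron integral equation \eqref{equa-stabel-manifold} for $q_n=\hat G_\tau^n(x)-\hat G_\tau^n(\xi)$, and estimates $\|q_n\|$ by a fixed-point argument. You instead observe directly that \eqref{suc} with $i=0$ makes $\hat R_1$ vanish on $X_c\oplus X_u$, that the straightening \eqref{new-axes} (and a radially symmetric cut-off) forces $\Pi^s(\hat g+\tau\hat R_1)$ to vanish there too, and that the mean value inequality in the $x_s$-variable then gives $\|\Pi^s\hat G_\tau(z)\|\le(\|A_s\|+2\delta)\|\Pi^s z\|$ in an adapted norm, which iterates to $\|\Pi^s\hat G_\tau^n(z)\|\le C\hat\mu^n\|\Pi^s z\|$. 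This is shorter and more self-contained: it replaces the appeal to \cite{CHT} by a one-line Lipschitz argument exploiting the invariance of $X_c\oplus X_u$ under every $\hat G_\tau$. The trade-off is that the paper's route is the one that generalizes cleanly when the stable leaf through $x$ need not be straight; here both work. Your sharper exponents ($\hat\nu^{-n}\hat\varrho^{kn}$ in place of the paper's single $L^n$) are an inessential refinement; what matters is that some fixed exponential bound on the cocycle derivatives is available, which Lemma~\ref{lm-bound-G} already provides.

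Two small caveats worth flagging if you were to write this out in full: (i) the one-step contraction does require working in a norm adapted to the trichotomy so that $\|A_s\|<1$ and $\Pi^s$ has norm $1$; (ii) the $\hat R_1\equiv 0$ and $\Pi^s\hat g\equiv 0$ properties on $X_c\oplus X_u$ must survive the cut-off, which is ensured by choosing a radial bump function. Both points are used implicitly in the paper as well, so this is not a gap, merely something to make explicit.
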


To prove this lemma, we need several  auxiliary lemmas. The proof of the following lemma is analogous to the proof of~\cite[Lemma 4.4]{LL-CPAM} (in the  particular case when $\Omega$ is taken to be a singleton).
\begin{lm}\label{lm-bound-G} Given any $k\in \mathbb N$,
there exists a constant $L>0$ such that
\begin{equation}\label{derivatives}
    \max_{1\le i\le k+1}\bigg\{ \sup_{(x,\tau)\in X\times [0,1]} \left\|d^i_{(x,\tau)}\hat G^n_\tau\right\|, \sup_{(x,\tau)\in X\times [0,1]} \left\|d^i_{(x,\tau)}\hat  G^{-n}_\tau\right\| \bigg\} \le L^n \qquad \text{for all} \quad n\ge 0,
\end{equation}
where $\hat G^n_\tau$ and $\hat  G^{-n}_\tau$ refer to $n$-th iterates of $\hat  G_\tau$ and $\hat G^{-1}_\tau$, respectively.
\end{lm}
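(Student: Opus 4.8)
\textit{Proof plan.} The plan is to reduce the statement to a purely finite, combinatorial estimate for the iterates of a map of the form ``bounded linear isomorphism plus a uniformly $C^N$-bounded, compactly supported perturbation''. First I would record the basic uniform estimates on $\hat G_\tau$ itself. Writing $\hat G_\tau=A+P_\tau$ with $P_\tau:=\hat g+\tau\hat R_1$, the cut-off construction of $\hat g$ and $\hat R_1$ guarantees that $P_\tau$ is a $C^N$ map supported in $\{\|x\|\le r\}$, so there are constants $\delta',M'>0$, with $\delta'$ as small as we wish (after shrinking $r$ and $\delta$), such that $\|d_xP_\tau\|\le\delta'$ and $\|d_x^iP_\tau\|\le M'$ for all $x\in X$, all $\tau\in[0,1]$ and $2\le i\le N$. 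In particular $d_x\hat G_\tau=A+d_xP_\tau$ is invertible with $\|(d_x\hat G_\tau)^{-1}\|\le 2\|A^{-1}\|$ uniformly, so each $\hat G_\tau$ is a $C^N$ diffeomorphism; differentiating $\hat G_\tau^{-1}\circ\hat G_\tau=\mathrm{Id}$ and using the Fa\`a di Bruno formula expresses the derivatives of $\hat G_\tau^{-1}$ (up to order $N$) as polynomial expressions in the derivatives of $\hat G_\tau$ and in $(d\hat G_\tau)^{-1}$, all of which are uniformly bounded; hence $\hat G_\tau^{-1}$ is also $C^N$ with $\|d_x^i\hat G_\tau^{-1}\|$ bounded uniformly in $x$ and $\tau$ for $1\le i\le N$. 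Set $\Lambda:=\sup\{\|d_x\hat G_\tau^{\pm1}\|:x\in X,\ \tau\in[0,1]\}<\infty$; the chain rule immediately gives $\|d_x\hat G_\tau^{\pm n}\|\le\Lambda^{n}$ for all $n\ge0$, which is the case $i=1$ of \eqref{derivatives}.

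Next I would fix $k\in\mathbb N$, assume $N\ge k+1$ (which we may, after enlarging $N_0$), and prove the estimate for the forward iterates by induction on the differentiation order $i$, \emph{uniformly in $n$}. Writing $\hat G_\tau^{n}=\hat G_\tau\circ\hat G_\tau^{n-1}$ and applying the Fa\`a di Bruno formula, $d_x^i\hat G_\tau^{n}$ is a finite sum, indexed by partitions $\pi$ of $\{1,\dots,i\}$, of terms of the shape $d^{|\pi|}\hat G_\tau\big(\prod_{B\in\pi}d^{|B|}\hat G_\tau^{n-1}\big)$ evaluated at appropriate points. The single-block partition contributes a term of norm at most $\Lambda\,\|d_x^i\hat G_\tau^{n-1}\|$, while every other partition has all blocks of size strictly less than $i$. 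Abbreviating $u_n^{(i)}:=\sup_{x,\tau}\|d_x^i\hat G_\tau^{n}\|$, this yields a recursion $u_n^{(i)}\le\Lambda\,u_{n-1}^{(i)}+C_i\big(\max_{m<i}u_{n-1}^{(m)}\big)^{i}$, with $C_i$ depending only on $i$, on $\Lambda$ and on $M'$ (through the Bell number count of partitions and the uniform bound $u_1^{(|\pi|)}\le\max_{j\le i}u_1^{(j)}$). With $u_0^{(1)}=1$, $u_0^{(i)}=0$ for $i\ge2$ and the base case $u_n^{(1)}\le\Lambda^{n}$ ($L_1:=\Lambda$), assume inductively that $u_n^{(m)}\le L_m^{\,n}$ for all $n$ and all $m<i$ (with each $L_m\ge1$). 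Putting $\tilde L:=\max_{m<i}L_m$, the recursion becomes $u_n^{(i)}\le\Lambda\,u_{n-1}^{(i)}+C_i\tilde L^{\,i(n-1)}$; solving this linear recursion gives geometric growth in $n$, namely $u_n^{(i)}\le C_i'\,(\max\{\tilde L^{\,i},\Lambda\})^{n}$ for a suitable $C_i'\ge1$, and hence $u_n^{(i)}\le L_i^{\,n}$ after setting $L_i:=C_i'\max\{\tilde L^{\,i},\Lambda\}\ (\ge1)$. Taking $L:=\max_{1\le i\le k+1}L_i$ establishes the first half of \eqref{derivatives}.

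Finally, the very same induction applied verbatim to the $C^{k+1}$ diffeomorphism $\hat G_\tau^{-1}$ — legitimate because we have already checked that $\hat G_\tau^{-1}$ has derivatives up to order $k+1$ bounded uniformly in $x$ and $\tau$, and $\|d_x\hat G_\tau^{-n}\|\le\Lambda^{n}$ — produces the analogous bound for the backward iterates $\hat G_\tau^{-n}$, after possibly enlarging $L$; this completes the proof. I expect the only mildly delicate point to be the bookkeeping in the inductive step: one must run the induction on the order $i$ rather than on $n$, and observe that every partition other than the single-block one contributes only strictly lower-order derivatives of $\hat G_\tau^{n-1}$, so that the recursion closes and the growth remains geometric in $n$ (the combinatorial constants $C_i$ and the exponent $i$ are harmless, since they do not depend on $n$). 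Uniformity in $\tau$ and $x$ is built in from the start, being exactly the reason the cut-off modification was performed. This is precisely the mechanism of \cite[Lemma 4.4]{LL-CPAM}, to which the argument is entirely analogous.
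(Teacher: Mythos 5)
Your proposal is correct and follows the same mechanism the paper defers to by citing \cite[Lemma 4.4]{LL-CPAM}: the Fa\`a di Bruno formula applied to $\hat G_\tau^n=\hat G_\tau\circ\hat G_\tau^{n-1}$, isolating the single-block partition to obtain the linear recursion $u_n^{(i)}\le\Lambda\,u_{n-1}^{(i)}+(\text{lower order})$ and running the induction on the differentiation order $i$ rather than on $n$. The only step you should make fully explicit when writing this up is the resolution of the linear recursion when $\Lambda=\tilde L^{\,i}$: the particular solution then carries a polynomial factor $n$, but $n\,\mu^{n-1}\le(\mu+1)^n$ (for instance) restores the purely geometric bound, and this is harmless exactly as you indicate.
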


\begin{lm}\label{lm-series}
Provided that \eqref{suc} holds for a sufficiently large $N$, the series
\begin{equation}\label{solution-homo-equa}
    h(x,\tau):= -\sum_{n=1}^{+\infty} \frac{\partial \hat   G_\tau^{-n}}{\partial x} (\hat   G_\tau^{n}(x)) \hat  R_1\circ \hat  G_\tau^{n-1}(x)
\end{equation}
converges to a $C^k$ map satisfying \eqref{exist-01} for any fixed $k\in \mathbb N$.
\end{lm}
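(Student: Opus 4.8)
The plan is to show that \eqref{solution-homo-equa} is, for $N$ large (depending on $k$), an absolutely and locally uniformly convergent series of $C^k$ maps, the convergence being extracted from the flatness order $[N/2]$ of $\hat R_1$ on the center-stable subspace $X_s\oplus X_c$ and controlled against the exponential growth of the cocycle furnished by Lemma~\ref{lm-bound-G}. Two estimates do the work. First, a contraction estimate in the stable fibre: there is $\hat\mu\in(0,1)$ and a constant $\tilde K>0$, both independent of $N$, with $\|\Pi^s\hat G_\tau^{\,n}(x)\|\le \tilde K\,\hat\mu^{\,n}\,\|\Pi^s x\|$ for all $x\in X$, $\tau\in[0,1]$, $n\ge0$. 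Indeed, $\widetilde R_1$ vanishes with all derivatives of order $\le[N/2]$ on $\{0\}\times X_c\times X_u$, so $\hat R_1(0,x_c,x_u)=0$, while $\Pi^s\tilde g(0,x_c,x_u)=0$; having chosen the cut-off scale $r$ (and $\delta$) small enough, the nonlinearities $\hat g,\hat R_1$ have globally small Lipschitz constants and globally bounded higher derivatives. Writing $\hat G_\tau=A+\hat g+\tau\hat R_1$ and subtracting from $\Pi^s\hat G_\tau(x)$ its value at $(0,\Pi^c x,\Pi^u x)$ gives $\|\Pi^s\hat G_\tau(x)\|\le(\tilde\mu_+ + \varepsilon)\tilde K\|\Pi^s x\|$ with $\varepsilon$ as small as we wish and $\tilde\mu_+<\tilde\lambda_-\le1$ by (H\ref{spec-cond}); a discrete Gronwall argument over $n$ steps then yields the claimed estimate with $\hat\mu:=\tilde\mu_++\tilde K\varepsilon<1$. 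Second, a flatness estimate: since $\hat R_1\in C^N$ with $d^j_{(0,x_c,x_u)}\hat R_1=0$ for $0\le j\le[N/2]$, Taylor's theorem in the $X_s$-direction yields $C>0$, depending only on the uniform derivative bounds $M$ (hence independent of $N$), with $\|d^j_z\hat R_1\|\le C\,\|\Pi^s z\|^{[N/2]+1-j}$ for all $z\in X$ and $0\le j\le[N/2]$; in particular $\|\hat R_1(z)\|\le C\,\|\Pi^s z\|^{[N/2]+1}$.

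Now combine. Using Lemma~\ref{lm-bound-G} for $\|\partial_x\hat G_\tau^{-n}\|\le L^{\,n}$ and the two estimates above, the $n$-th summand of \eqref{solution-homo-equa} is bounded by $\|\partial_x\hat G_\tau^{-n}\|\cdot\|\hat R_1(\hat G_\tau^{\,n-1}(x))\|\le L^{\,n}\,C\,\tilde K^{[N/2]+1}\hat\mu^{(n-1)([N/2]+1)}\|x\|^{[N/2]+1}$. Since $L$ depends on $k$ but not on $N$, choosing $N$ large enough that $L\,\hat\mu^{[N/2]+1}\le\tfrac12$ makes this a convergent geometric series in $n$, uniformly on bounded subsets of $X\times[0,1]$; summing, $\|h(x,\tau)\|\le \hat L\,\|x\|^{[N/2]+1}\le\hat L\,\|x\|^2$ for $\|x\|\le1$, which is \eqref{exist-01} (the constant $\hat L$ may depend on $N$, but this is harmless). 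For $C^k$ regularity one differentiates \eqref{solution-homo-equa} term by term in $(x,\tau)$ up to order $k$: by the Fa\`a di Bruno formula each such derivative of the $n$-th term is a finite sum of products of derivatives of $\hat G_\tau^{\pm n}$ — each of size $\le L^{\,n}$ by Lemma~\ref{lm-bound-G} — with one factor $d^j_z\hat R_1$ evaluated at $z=\hat G_\tau^{\,n-1}(x)$, $j\le k$, which by the flatness and contraction estimates is $\le C\,\hat\mu^{(n-1)([N/2]+1-k)}$ provided $[N/2]\ge k$. Hence the $k$-times differentiated series is dominated by $\sum_{n\ge1}C_k\,\tilde K^{[N/2]+1}L^{\,c_k n}\,\hat\mu^{(n-1)([N/2]+1-k)}$ with $c_k$ depending only on $k$, and this converges once $N$ is large enough that $L^{\,c_k}\hat\mu^{[N/2]+1-k}<1$. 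Choosing $N_0=N_0(k)$ so that these finitely many smallness requirements hold simultaneously, the series and all its partial derivatives of order $\le k$ converge uniformly near the origin, so $h$ is $C^k$ there and satisfies \eqref{exist-01}, as required.

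The main obstacle is precisely this competition: the inverse-derivative cocycle $\partial_x\hat G_\tau^{-n}$ — and, in the smoothness step, its higher-order analogues — may grow like $L^{\,n}$ with $L\gg1$, so all the geometric decay needed to sum the series must come from the flatness order $[N/2]$ of $\hat R_1$ on the center-stable subspace; arranging $[N/2]$ to dominate $\log L$ (and $c_k\log L$) is what forces $N_0$ to be large and $k$-dependent. The accompanying bookkeeping obstacle is to verify at every step that $\hat\mu$, $C$, $L$ and $c_k$ are genuinely independent of $N$ (so that powers such as $\hat\mu^{[N/2]+1}$ really beat $L$), while tolerating $N$-dependent front constants like $\tilde K^{[N/2]+1}$, which affect only the size of $\hat L$ and of the $C^k$-norm of $h$, not the convergence.
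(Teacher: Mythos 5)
Your proof is correct and takes the same overall structure as the paper -- exponential contraction of the stable component along orbits, Taylor-type flatness of $\hat R_1$ in the $X_s$-direction, and then domination of the differentiated series against the $L^n$ bound of Lemma~\ref{lm-bound-G} -- but it derives the key contraction estimate~\eqref{stab-folia} by a genuinely different and more self-contained route. The paper invokes the invariant foliation machinery of~\cite{CHT} (Theorem~3.1(vi) and Lemma~3.3): it picks the center-unstable base point $\xi$ whose leaf contains $x$, writes the two-sided Lyapunov--Perron equation for $q_n:=\hat G_\tau^n(x)-\hat G_\tau^n(\xi)$, and reads off the decay of $\Pi^s\hat G_\tau^n(x)=\Pi^s q_n$. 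You bypass the foliation: because the coordinate change~\eqref{new-axes} has already straightened the center-unstable manifold so that $\Pi^s\hat G_\tau(0,x_c,x_u)=0$, the sequence $u_n:=\Pi^s\hat G_\tau^n(x)$ obeys the one-sided Duhamel recursion $u_{n+1}=A_s u_n+r_n$ with $\|r_n\|\lesssim\varepsilon\|u_n\|$, which closes under a variation-of-constants Gronwall using the $n$-step dichotomy bound $\|A_s^m\|\le\tilde K\tilde\mu_+^m$. This avoids the citation to~\cite{CHT} and yields a decay rate $\hat\mu$ arbitrarily close to $\tilde\mu_+$. The remainder of your argument (flatness bound, summability once $[N/2]$ dominates the logarithm of $L$, and the Fa\`a di Bruno bookkeeping for $C^k$ regularity) matches the paper.

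One caveat on presentation: the phrase ``a discrete Gronwall argument over $n$ steps'' must be understood as the variation-of-constants form just described, \emph{not} as iterating the one-step estimate $\|\Pi^s\hat G_\tau(x)\|\le(\tilde\mu_++\varepsilon)\tilde K\|\Pi^s x\|$. Since $\tilde K\tilde\mu_+$ may exceed $1$, naive iteration of that one-step bound does not decay; only after writing $u_n=A_s^nu_0+\sum_{i=0}^{n-1}A_s^{n-1-i}r_i$ and renormalising by $\tilde\mu_+^{-n}$ does Gronwall produce $\|u_n\|\le\tilde K(\tilde\mu_++O(\tilde K\varepsilon))^n\|u_0\|$. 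The rate $\hat\mu=\tilde\mu_++\tilde K\varepsilon$ you quote is exactly the outcome of this computation (up to a harmless extra power of $\tilde K$ in the correction term coming from $\|\Pi^s\|\le\tilde K$), so this is a wording issue rather than a gap.
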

\begin{proof}
Since $d_0 \hat R_1 =0$ and $\hat R_1 =0$ for $\|x\|\ge r$,  we see that the derivative of $\hat R_1$ can be made arbitrarily small on  $X$ provided that  $r$ is chosen to be small, which implies that the Lipschitz constant of the nonlinear term of $\hat  G_\tau$ on $X$ is small.
According to \cite[Theorem 3.1 (\romannumeral6)]{CHT},  we have that $\bigcup_{\xi\in X_c\oplus X_u}M_\xi =X$, where
$$M_\xi=\left\{ x\in X : \limsup_{n\to\infty} \frac{1}{n}\log\| \hat G_\tau^n(x) -\hat G_\tau^n(\xi)\| \le \log \gamma \right\} $$
for a certain constant $\tilde \mu_+<\gamma<\tilde \lambda_-$ and $M_\xi\cap M_\zeta=\emptyset$ whenever $\xi\ne \zeta$.
Therefore, for any $x\in X$, there exists a $\xi\in X_c\oplus X_u$ such that $x\in M_\xi$. In addition, by \cite[Lemma 3.3]{CHT}, the sequence of differences $(q_n:=\hat G_\tau^n(x) -\hat G_\tau^n(\xi))_{n\ge 0}$
satisfies  the following equation:
\begin{align}\label{equa-stabel-manifold}
q_n &= A_s^n x_s + \sum_{i=0}^{n-1} A_s^{n-1-i}\Pi^s\{T(\xi_i+q_i) - T(\xi_i) \}
\nonumber\\
&\quad - \sum_{i=n}^{\infty} A_{uc}^{n-1-i}({\rm Id}-\Pi^s)\{T(\xi_i+q_i) - T(\xi_i) \},~~~n\ge 0,
\end{align}
where $A_{uc}$ denotes the restriction of  $diag(A_s,A_c, A_u)$ on $X_c\oplus X_u$, $T:=\hat G_\tau - diag(A_s,A_c, A_u)$ and $\xi_n:=\hat G^n_\tau(\xi)$ for $n\ge 0$.
According to the definition of $M_\xi$, there exists $\epsilon>0$ such that $\gamma+\epsilon < \tilde \lambda_-$ and
\[ C_*:= \sup_{n\ge 0} (\gamma+\epsilon)^{-n} \|q_n\| <\infty. \]
From \eqref{equa-stabel-manifold}, we derive the estimate
\begin{align*}
\|q_n\| &\le \|A_s^n\|\|x_s\|+\sum_{i=0}^{n-1} \|A_s^{n-1-i}\| {\rm Lip}(T)\|q_i\|
+\sum_{i=n}^\infty \|A_{uc}^{n-1-i}\| {\rm Lip}(T) \|q_i\|
\\
&\le \|A_s^n\|\|x_s\| +\left( \sum_{i=0}^{n-1} \|A_s^{n-1-i}\| (\gamma+\epsilon)^i + \sum_{i=n}^\infty \|A_{uc}^{n-1-i}\| (\gamma+\epsilon)^i \right){\rm Lip}(T) C_*,
\end{align*}
which implies by \eqref{contrac-part} and \eqref{center-part} that
$$C_*\le \frac{\tilde K}{1-c}\|x_s\|,$$
where
\begin{align*}
 c&:=\sup_{n\ge 0}\left( \sum_{i=0}^{n-1} \|A_s^{n-1-i}\| (\gamma+\epsilon)^{i-n} + \sum_{i=n}^\infty \|A_{uc}^{n-1-i}\| (\gamma+\epsilon)^{i-n}\right){\rm Lip}(T)
 \\
 & \le \sup_{n\ge 0}\left( \tilde K \sum_{i=0}^{n-1} \left( \frac{\tilde \mu_+}{\gamma+\epsilon}\right)^{n-i} \frac{1}{\tilde \mu_+} + \tilde K \sum_{i=n}^\infty \left( \frac{\gamma+\epsilon}{\tilde \lambda_-}\right)^{i-n} \frac{1}{\tilde \lambda_-} \right) {\rm Lip}(T)
 \\
 &\le \tilde K \left( \frac{1}{\gamma+\epsilon-\tilde \mu_+} + \frac{1}{\tilde \lambda_- -\gamma -\epsilon}  \right){\rm Lip}(T)
 \\
 &<1,
\end{align*}
provided that ${\rm Lip}(T)$ is sufficiently small. Then for all $n\ge 0$
\begin{equation}\label{stab-folia}
\|\Pi^s \hat  G_\tau^{n}(x) \| =    \|\Pi^s \hat G_\tau^{n}(x) - \Pi^s \hat G_\tau^{n}(\xi)  \|\le \tilde K\|q_n\| \le \frac{\tilde K^2}{1-c}(\gamma+\epsilon)^n\|x_s\|.
\end{equation}
On the other hand, by \eqref{suc} and the cut-off argument, we have that for $0\le i\le [N/2]$
\[d_{(0,x_c,x_u)}^i \hat R_1 =0 \qquad \text{for all } \quad (x_c,x_u) \in X_c\oplus X_u. \]
Then it follows from the Taylor formula that
\begin{align*}
    d^i_x \hat R_1 &= d_{(0,x_c,x_u)}^i \hat R_1 + d_{(0,x_c,x_u)}^{i+1} \hat R_1 \cdot x_s + \cdots +
    \\
    &\qquad d_{(0,x_c,x_u)}^{[N/2]} \hat R_1 \cdot (x_s)^{[N/2]-i} + \int_0^1 \frac{(1-t)^{[N/2]-i}}{([N/2]-i)!} d_{(tx_s,x_c,x_u)}^{[N/2]+1} \hat R_1 \cdot (x_s)^{[N/2]+1-i} dt
    \\
    &= \int_0^1 \frac{(1-t)^{[N/2]-i}}{([N/2]-i)!} d_{(tx_s,x_c,x_u)}^{[N/2]+1} \hat R_1 \cdot (x_s)^{[N/2]+1-i} dt,
\end{align*}
which implies that for all $x\in X$
\begin{equation}\label{high-power-stab-varia}
 \|d_x^i\hat R_1\| \le \frac{Q}{([N/2]+1-i)!}\| x_s\|^{[N/2]+1-i}\qquad\text{for all} \quad 0\le i\le k,
\end{equation}
with
\[Q= \sup_{x\in X} \left\|d_x^{[N/2]+1} \hat R_1 \right\| <\infty. \]
Consider $i$, $0\le i\le k$, times differentiation of every term on the right-hand side of \eqref{solution-homo-equa}, which is a sum of several terms. We can see every term in this sum has one factor
$$
(D^j\hat R_1 )\circ \hat  G_\tau^{n-1}(x) \qquad \text{for a certain}\quad 0\le j\le i,
$$
where $D^j \hat R_1$ denotes the function of the $j$-th derivative of $\hat R_1$,
which is dominated by $$\frac{Q}{([N/2]+1-i)!}\left( \frac{\tilde K^2}{1-c} \right)^{[N/2]+1-j}(\gamma+\epsilon)^{(n-1)([N/2]+1-j)}\|x_s \|^{[N/2]+1-j}$$ using \eqref{stab-folia} and \eqref{high-power-stab-varia}. By Lemma \ref{lm-bound-G},
the remaining factors are dominated by $L^n$. Consequently, the series of the $i$-th derivative of \eqref{solution-homo-equa} is absolutely convergent  on any bounded domain provided that $N$ is sufficiently large. This shows that $h$ defined by \eqref{solution-homo-equa} is a $C^k$ map. In particular, \eqref{exist-01} holds with a constant $\hat L>0$ if $N\ge 2$.  The proof is completed.
\end{proof}

\begin{lm}\label{lm-solut}
The map $h$ given by~\eqref{solution-homo-equa} satisfies~\eqref{homotopy}.
\end{lm}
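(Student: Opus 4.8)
The plan is to establish \eqref{homotopy} by a direct, if somewhat bookkeeping-heavy, computation with the series \eqref{solution-homo-equa}; every rearrangement used below is legitimate because Lemma~\ref{lm-series} guarantees that this series---and the series obtained from it after changing the base point from $x$ to $\hat G_\tau(x)$---converge absolutely and uniformly on bounded subsets of $X$, for each fixed $\tau\in[0,1]$. First I would fix $\tau$ and abbreviate $J_n(x):=\frac{\partial \hat G_\tau^{n}}{\partial x}(x)$ for $n\in\mathbb Z$, so that $J_0(x)=\Id$. Differentiating the identity $\hat G_\tau^{-n}\circ \hat G_\tau^{n}=\Id$ gives $\frac{\partial \hat G_\tau^{-n}}{\partial x}\bigl(\hat G_\tau^{n}(x)\bigr)=J_n(x)^{-1}$, so that \eqref{solution-homo-equa} reads
\[
h(x,\tau)=-\sum_{n=1}^{\infty}J_n(x)^{-1}\,\hat R_1\!\bigl(\hat G_\tau^{n-1}(x)\bigr).
\]
The algebraic identity that drives the telescoping is the cocycle relation
\[
J_n\bigl(\hat G_\tau(x)\bigr)^{-1}=J_1(x)\,J_{n+1}(x)^{-1},\qquad n\ge 1,
\]
which follows by applying the chain rule to $\hat G_\tau^{n+1}=\hat G_\tau^{n}\circ \hat G_\tau$ and inverting.

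Next I would compute the left-hand side of \eqref{homotopy} at a point $(x,\tau)$. Since $\overline G(x,\tau)=(\hat G_\tau(x),\tau)$, substituting $\hat G_\tau(x)$ for $x$ in the series for $h$, using the cocycle relation, and reindexing $m=n+1$ yields
\[
h\circ \overline G(x,\tau)=-\sum_{n=1}^{\infty}J_n\bigl(\hat G_\tau(x)\bigr)^{-1}\hat R_1\!\bigl(\hat G_\tau^{n}(x)\bigr)=-J_1(x)\sum_{m=2}^{\infty}J_m(x)^{-1}\hat R_1\!\bigl(\hat G_\tau^{m-1}(x)\bigr),
\]
where the fixed bounded operator $J_1(x)=\frac{\partial\hat G_\tau}{\partial x}(x)$ has been pulled out of the absolutely convergent sum. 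On the other hand
\[
\frac{\partial\hat G_\tau}{\partial x}\,h(x,\tau)=-J_1(x)\sum_{n=1}^{\infty}J_n(x)^{-1}\hat R_1\!\bigl(\hat G_\tau^{n-1}(x)\bigr).
\]
Subtracting the last two displays, all terms with index $\ge 2$ cancel and only the $n=1$ term of the second series survives, giving
\[
\frac{\partial\hat G_\tau}{\partial x}\,h-h\circ \overline G=-J_1(x)\,J_1(x)^{-1}\,\hat R_1(x)=-\hat R_1(x),
\]
which is precisely \eqref{homotopy}.

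The only genuine point to check---and hence the main, though mild, obstacle---is the justification of the termwise manipulations: the change of base point inside the series, the reindexing, and moving $\frac{\partial\hat G_\tau}{\partial x}(x)$ through the infinite sum. For these I would invoke Lemma~\ref{lm-series} (absolute and uniform convergence of \eqref{solution-homo-equa} on bounded sets, for every $\tau$, once $N$ is taken sufficiently large) together with the uniform iterate bounds $\|\frac{\partial\hat G_\tau^{\pm n}}{\partial x}\|\le L^{n}$ from Lemma~\ref{lm-bound-G}; once absolute convergence has been secured, the rearrangements above are standard facts about convergent series in a Banach space. This completes the proof of the lemma. Combined with Lemma~\ref{lm-series}, it shows that \eqref{homotopy} admits a $C^k$ solution $h$ satisfying \eqref{exist-01}, so Lemma~\ref{lm-homot-method} applies and produces a local $C^k$ conjugacy between $\hat G_0$ and $\hat G_1$; this proves Lemma~\ref{lm-solut-homot}, and, unwinding the reductions made at the beginning of the appendix, Theorem~\ref{ck-conjuga}.
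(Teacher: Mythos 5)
Your computation is correct and is essentially the same telescoping argument the paper gives: rewriting $\frac{\partial\hat G_\tau^{-n}}{\partial x}(\hat G_\tau^n(x))$ via the chain rule so that both sums range over the same summand and all terms with index $\ge 2$ cancel, leaving only the $n=1$ term $-\hat R_1(x)$. Introducing the cocycle notation $J_n$ and the relation $J_n(\hat G_\tau(x))^{-1}=J_1(x)J_{n+1}(x)^{-1}$ is a cosmetic repackaging of the same chain-rule step, not a different argument.
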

\begin{proof}
We check it directly.
By \eqref{solution-homo-equa}, we see that
\begin{align*}
\bigg(\frac{\partial\hat G_\tau}{\partial x} h - h \circ\overline G\bigg)(x,\tau) &=-\sum_{n=1}^{+\infty} \frac{\partial\hat G_\tau}{\partial x} (x) \frac{\partial \hat  G_\tau^{-n}}{\partial x} (\hat  G_\tau^{n}(x)) \hat R_1 \circ \hat G_\tau^{n-1}(x)
     \\
     & \quad+\sum_{n=1}^{+\infty} \frac{\partial \hat  G_\tau^{-n}}{\partial x} (\hat G_\tau^{n+1}(x)) \hat R_1 \circ \hat G_\tau^{n}(x)
     \\
     &=-\sum_{n=1}^{+\infty}  \frac{\partial \hat G_\tau^{-n+1}}{\partial x} (\hat G_\tau^{n}(x)) \hat R_1 \circ \hat G_\tau^{n-1}(x)
     \\
     & \quad+\sum_{n=1}^{+\infty} \frac{\partial \hat  G_\tau^{-n}}{\partial x} (\hat  G_\tau^{n+1}(x)) \hat R_1\circ \hat  G_\tau^{n}(x)
     \\
     &= - \hat R_1(x),
\end{align*}
which yields~\eqref{homotopy}.
\end{proof}

Now we are ready to prove Lemma \ref{lm-solut-homot}.

\noindent
{\bf Proof of Lemma \ref{lm-solut-homot}.} Clearly, Lemma \ref{lm-series} together with Lemma \ref{lm-solut} implies the desired conclusion. \hfill$\square$

We are now in a position to prove Theorem \ref{ck-conjuga}.

\noindent
{\bf Proof of Theorem \ref{ck-conjuga}.} By Lemmas \ref{lm-decomp}, \ref{lm-homot-method} and \ref{lm-solut-homot}, we have that $\hat G_0$ and $\hat G_1$ are locally $C^k$ conjugated.
Let $\hat F_\tau(x) := Ax + \hat  g(x)+\hat R_1(x) + \tau\hat R_2(x)$ for $x\in X$ and $\tau \in [0, 1]$.
Using the same arguments we used to show that  $\hat G_0$ and $\hat G_1$ are  locally $C^k$ conjugated, one can prove that $\hat F_0$ and $\hat  F_1$ are also locally $C^k$ conjugated. Since $\hat  G_0=\widetilde G$ locally, $\hat  G_1=\hat  F_0$ and $\hat  F_1=\widetilde  F$ locally,
we conclude  that $\widetilde G$ is locally $C^k$ conjugated to $\widetilde F$. Since $F$ is conjugated to $\widetilde F$ and $G$ is  conjugated to $\widetilde G$, it is derived that $F$ and $G$ are locally conjugated. The proof is completed.\hfill$\square$
\section*{Acknowledgements}

The authors are ranked in alphabetic order of their names and should  be treated equally. D. Dragi\v cevi\' c was supported in part by Croatian Science Foundation under the Project IP-2019-04-1239 and by the University of Rijeka under the project uniri-iskusni-prirod-23-98.
X. Tang was supported by NSFC \#12001537, the Start-up Funding of Chongqing Normal University (Grant No. 20XLB033), and the Science and Technology
Research Program of Chongqing Municipal Education Commission (Grant No. KJQN202300540).
W. Zhang was supported in part by NSF-CQ \#CSTB2023NSCQ-JQX0020, NSFC \#12271070, and the Research Project of Chongqing Education Commission CXQT21014.


\begin{thebibliography}{1}


\bibitem{Arnold-book} V. I. Arnold, {\it Geometrical Methods in the Theory of Ordinary Differential Equations.} 2nd ed. Springer, 1988.

\bibitem{AS00}
B.Aulbach and S. Siegmund, A spectral theory for nonautonomous difference equations. {\it Proceedings of the Fifth International Conference on Difference Equations and Applications} 2000, Gordon and Breach.

\bibitem{BDV}
L. Barreira, D. Dragi\v cevi\' c and C. Valls, \emph{Exponential dichotomies with respect to a sequence of norms and admissibility}, Internat. J. Math. \textbf{25} (2014),  1450024, 20 pp.







\bibitem{BK-JDDE02}
G. R. Belitskii and A. Ya Kopanskii, Equivariant Sternberg-Chen theorem. {\it J. Dyn. Differ. Equ.}, {\bf 14} (2002), 349--367.

\bibitem{B-TAMS96}
P. Bonckaert, Partially hyperbolic fixed points with constraints. {\it Trans. Amer. Math. Soc.}, {\bf 348} (1996), 997--1011.





\bibitem{CHT}
X. Y. Chen, J. K. Hale, and B. Tan, Invariant Foliations for $C^1$ Semigroups in Banach Spaces, {\it J.  Differential Equations} {\bf 139}(1997), 283-318.

\bibitem{Chi-Nonlin10}
D. Chillingworth, Dynamics of an impact oscillator near a degenerate graze. {\it Nonlinearity} {\bf 23} (2010), 2723.

\bibitem{CDS-JDDE19} L. V. Cuong, T. S. Doan and S. Siegmund, A Sternberg theorem for nonautonomous differential equations. {\it Journal of Dynamics and Differential Equations}, {\bf 31} (2019), 1279-1299.


\bibitem{Henry-book} D. Henry, {\it Geometric Theory of Semilinear Parabolic Equations} vol. {\bf 840}, Springer, 2006.


\bibitem{CDHM-JSP23}
P. Collet, F. Dunlop, T. Huillet and A. Mardin, A Gibbsian random tree with nearest neighbour interaction. {\it J. Stat. Phys.} {\bf 190} (2023), 78.

\bibitem{DZZ} D. Dragi\v cevi\'c, W. Zhang and W. Zhang,
Smooth linearization of nonautonomous difference equations with a nonuniform dichotomy, {\it Math. Z.}, {\bf 292} (2019), 1175-1193.

\bibitem{DZZ-PLMS20}
D. Dragi\v cevi\'c, W. Zhang and W. Zhang, Smooth linearization of nonautonomous differential equations with a nonuniform dichotomy, {\it Proc. London Math. Soc.} {\bf 121} (2020), 32--50.

\bibitem{DR-CPAA09}
F. Dumortier and C. Rousseau, Study of the cyclicity of some degenerate graphics inside quadratic systems. {\it Commun. Pure Appl. Anal.} {\bf 8} (2009), 1133--1157.

\bibitem{DR-QTDS10}
F. Dumortier and C. Rousseau, Smooth normal linearization of vector fields near lines of singularities. {\it Qualitative Theory of Dynamical Systems}, {\bf 9} (2010), 39--87.

\bibitem{DS-TAMS95}
F. Dumortier and B. Smits, Transition time analysis in singularly perturbed boundary value problems.{\it Trans. Amer. Math. Soc.} {\bf 347} (1995), 4129--4145.

\bibitem{D-Nonlin21}
N. Duignan and H. Dullin, On the $C^{8/3}$-regularisation of simultaneous binary collisions in the planar four-body problem. {\it Nonlinearity} {\bf 34} (2021), 4944.

\bibitem{D-QTDS21}
N. Duignan, Normal forms for manifolds of normally hyperbolic singularities and asymptotic properties of nearby transitions. {\it Qualitative Theory of Dynamical Systems}, {\bf 20} (2021), 26.

\bibitem{Gallay}
Th. Gallay, \emph{A center-stable manifold theorem for differential equations in Banach spaces}, Comm. Math. Phys. \textbf{152} (1993), 249--268.


\bibitem{Palmer}
K. Palmer, A generalization of Hartman's linearization theorem, {\it J. Math. Anal. Appl.} {\bf 41} (1973), 753-758.

\bibitem{Poin}
H. Poincar\'e, Sur le probl\`{e}me des trois corps et les \'equations de la dyanamique, {\it Acta Math.} {\bf 13} (1890), 1-270.

\bibitem{Ray-ETDS09} V. Rayskin, Theorem of Sternberg-Chen modulo the central manifold for Banach spaces. {\it Ergodic Theory and Dynamical Systems}, {\bf 29} (2009), 1965-1978.



\bibitem{IL-AMS98}
Y. Ilyashenko and W. Li, {\it Nonlocal Bifurcations}, volume 66 of Mathematical Surveys and Monographs,
American Mathematical Society, Providence, Rhode Island, 1998.

\bibitem{LL-CPAM}
W. Li and K. Lu, Sternberg theorems for random dynamical systems, {\it Comm. Pure Appl.
Math.}, {\bf 58} (2005), 941-988.

\bibitem{LL-DCDS16} W. Li and K. Lu, Takens theorem for random dynamical systems. {\it Discrete and Continuous Dynamical Systems-B}, {\bf 21} (2016), 3191--3207.

\bibitem{Ma-AnnMath68}
J. Mather, Stability of $C^\infty$-mapping, I–VI, {\it Ann. Math.}, {\bf 87} 89--104.



\bibitem{NPT-IHES83}
S. E. Newhouse, J. Palis and F. Takens, Bifurcations and stability of families of diffeomorphisms. {\it Publications Mathématiques de l'IH\'ES}, {\bf 57} (1983), 5--71.


\bibitem{Rob-Nonlin89}
C. Robinson, Homoclinic bifurcation to a transitive attractor of Lorenz type. {\it Nonlinearity} {\bf 2} (1989), 495--518.

\bibitem{Siegm-JDE02} S. Siegmund, Normal forms for nonautonomous differential equations, {\it J. Differ. Equ.} {\bf 178} (2002) 541-573.

\bibitem{Siegm-JDEA02} S. Siegmund, Block Diagonalization of Linear Difference Equations
{\it J. Differ. Equ. Appl.} {\bf 8} (2002) 177-189.

\bibitem{Sternberg1}
S. Sternberg, Local $C^n$ transformations of the real line, {\em Duke Math. J.}, 24 (1957) 97--102.

\bibitem{Sternberg2}
S. Sternberg, Local contractions and a theorem of Poincar\'{e}, {\em Amer. J. Math.}, 79(1957) 809--824.

\bibitem{Takens-Topol71} F. Takens, Partially hyperbolic fixed points, {\it Topology}, {\bf 10} (1971), 133-147.

\bibitem{WL-JDE08} H. Wu and W. Li, Poincar\'e type theorems for non-autonomous systems. {\it Journal of Differential Equations}, {\bf 245} (2008), 2958-2978.

\end{thebibliography}
\end{document}